\def\height{\operatorname{height}}
\def\len{\operatorname{\ell}}
\def\del{\partial}
\def\cube#1#2#3#4#5#6#7#8{
& #5 \ar[rr] \ar[dl] \ar@{-}[d] && #6 \ar[dd] \ar[dl] \\
#1 \ar[rr] \ar[dd]  & \ar[d] & #2 \ar[dd] \\
& #7 \ar@{-}[r] \ar[dl] & \ar[r] & #8 \ar[dl] \\
#3 \ar[rr] && #4 \\
}
\def\smsh{\wedge}
\def\chr{\operatorname{char}}
\def\image{\operatorname{im}}
\def\im{\image}
\def\ker{\operatorname{ker}}
\def\cP{\mathcal P}
\def\cC{\mathcal C}
\def\coker{\operatorname{coker}}
\def\dm{\operatorname{dim}}
\def\codim{\operatorname{codim}}
\def\rank{\operatorname{rank}}
\def\fl{{\mathsf{fl}}}
\def\Tor{\operatorname{Tor}}
\def\Spec{\operatorname{Spec}}
\def\into{\hookrightarrow}
\def\onto{\twoheadrightarrow}
\def\ann{\operatorname{ann}}
\def\b{\beta}
\newcommand{\G}{\mathbb{G}}
\newcommand{\Z}{\mathbb{Z}}
\newcommand{\fp}{{\mathfrak p}}
\newcommand{\fm}{{\mathfrak m}}
\numberwithin{equation}{section}
\theoremstyle{plain} %% This is the default, anyway
\newtheorem{thm}[equation]{Theorem}
\newtheorem{thm-conj}[equation]{Theorem-Conjecture}
\newtheorem{defn-conj}[equation]{Definition-Conjecture}
\newtheorem*{introthm*}{Theorem}
\newtheorem{cor}[equation]{Corollary}
\newtheorem{lem}[equation]{Lemma}
\newtheorem{prop}[equation]{Proposition}
\newtheorem{conj}[equation]{Conjecture}
\theoremstyle{definition}
\newtheorem{defn}[equation]{Definition}
\theoremstyle{remark}
\newtheorem{rem}[equation]{Remark}
\newtheorem{notation}[equation]{Notation}
\def\Perf{\operatorname{Perf}}
\newcommand{\Hom}{\operatorname{Hom}}
\newcommand{\supp}{\operatorname{supp}}
\newcommand{\xra}[1]{\xrightarrow{#1}}
\newcommand{\pd}{\operatorname{pd}}
\renewcommand{\cC}{\mathcal{C}}
\newcommand{\sC}{\operatorname{s} \mathcal{C}}
\def\L{\Lambda}
\def\wt#1{\widetilde{#1}}
\def\and{ \text{ and } }
\def\can{\mathrm{can}}
\def\op{{\mathrm{op}}}
\def\G{\Gamma}
\renewcommand{\op}{{\text{op}}}
\def\Chzero{\operatorname{Ch}_{\geq 0}}
\title{The Total Rank Conjecture in Characteristic Two}
\author{Keller VandeBogert}
\author{Mark E.~Walker}
\begin{document}

\maketitle

\begin{abstract}
    The Total Rank Conjecture is a coarser version of the Buchsbaum-Eisenbud-Horrocks conjecture which, loosely stated, predicts that modules with large annihilators must also have large syzygies. This conjecture was proved by the second author for rings of odd characteristic by taking advantage of the Adams operations on the category of perfect complexes with finite length homology. In this paper, we prove a stronger form of the Total Rank Conjecture for rings of characteristic two. This result may be seen as evidence that the Generalized Total Rank Conjecture (which is known to be false in odd characteristic) actually holds in characteristic two. We also formulate a meaningful extension of the Total Rank Conjecture over non Cohen-Macaulay rings and prove that this conjecture holds for any algebra over a field (independent of characteristic).
\end{abstract}

\section{Introduction}
The ``Rank Conjectures'' are a collection of related conjectures in algebra and topology.
On the algebraic side, they refer to conjectural lower bounds on the ranks of the free modules occurring in certain types of chain complexes.
On the topological side, they refer to conjectural lower bounds on the ranks of the cohomology of spaces that admit free group actions.
For an instance of the latter, Carlsson conjectures \cite{carlsson1986free} that if an elementary abelian $2$-group of rank $n$ acts freely and cellularly
on a CW complex $X$, then the sum of the ranks of the $\Z/2$-homology of $X$ must be at least $2^n$ --- in other words, the larger the group that acts (freely), the larger
the homology must be. 

This paper focuses mostly on the algebraic rank conjectures, although we do discuss Carlsson's conjecture in Section \ref{sec:Carlsson}  below. Informally, such rank conjectures may be interpreted as saying that modules with large codimension must have more relations than modules with smaller codimension. 
In detail, over a local ring $(R , \fm , k)$ a minimal presentation of an $R$-module $M$ is given by a right exact sequence
$R^{\beta_1} \to R^{\beta_0} \to M \to 0$, with $\beta_0$ equal to the number of generators and $\beta_1$ equal to the number of relations.
It can be shown that $\beta_1 \geq \codim_R(M) := \height(\ann_R(M))$.
In other words, the minimal number of generators of the first syzygy of $M$ is at least its codimension; indeed, this bound is sharp since the quotient $M = R/(a_1 , \dots , a_d)$ by a regular sequence $a_1 , \dots ,a_d$ achieves equality for this lower bound. The Buchsbaum-Eisenbud-Horrocks (BEH)
Conjecture posits that this case in fact has the smallest possible number of relations for a fixed codimension:

\begin{conj}[BEH Conjecture]
    Let $R$ be a commutative Noetherian ring with connected spectrum and $M$ any nonzero $R$-module of finite projective dimension. Then for any projective resolution
    $$
    0 \to P_d \to \cdots \to P_1 \to P_0 \to M \to 0
    $$
  of $M$,  there is an inequality
    $$
    \rank_R (P_i) \geq \binom{\codim_R(M)}{i}.
    $$
\end{conj}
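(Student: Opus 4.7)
The plan is to reduce the problem to a highly controlled local situation and then attack the resulting numerical inequalities via Adams operations on the category of perfect complexes with finite-length homology.

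For the reduction step, I would first pass to a faithfully flat local extension so that $(R,\fm,k)$ is complete local with algebraically closed residue field, then, after choosing a minimal prime $\fp$ of $\ann_R(M)$, localize and use standard flatness arguments to replace $M$ by a module of finite length over a Cohen-Macaulay local ring of dimension $c = \codim_R(M)$. Neither modification alters any $\rank_R(P_i)$ nor the inequality to be proven. In this reduced setting, $P_\bullet$ becomes a perfect complex with finite-length homology whose support in $\Spec(R)$ has codimension exactly $c$, and the BEH conjecture becomes the purely numerical assertion that $\rank_R(P_i) \geq \binom{c}{i}$ for such a complex.

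For the arithmetic step, the natural tool is the family of Adams operations $\psi^k$ on $\rrperf(R)$ restricted to the thick subcategory of complexes with finite-length homology. The crucial input is that if $F$ has support of codimension at least $c$, then the eigenvalues of $\psi^k$ acting on the rationalized Grothendieck group lie in $\{k^c, k^{c+1}, \dots, k^{\dim R}\}$. Taking appropriately weighted linear combinations of the traces of $\psi^k$ against the class of $P_\bullet$ produces identities relating the $\rank_R(P_i)$ to the multiplicity of $M$, and Walker's argument in odd characteristic shows how such identities yield at least the Total Rank inequality $\sum_i \rank_R(P_i) \geq 2^c = \sum_i \binom{c}{i}$. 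The present paper's refinements in characteristic two are meant to strengthen exactly the input used here.

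The main obstacle, and the reason BEH is strictly stronger than the Total Rank Conjecture, is that Adams operations naturally package the Betti numbers into an alternating or total sum, and do not isolate $\rank_R(P_i)$ for a fixed $i$. To obtain the pointwise inequality one would need a refined invariant sensitive to homological degree --- plausible candidates include a graded or equivariant version of $\psi^k$ coming from a compatible $\bG_m$-action, a term-by-term comparison against the Koszul complex $\Kos(\vf)$ on a system of parameters (which attains equality in BEH and is extremal in the derived category), or a new positivity statement for the Betti table inherited from an MCM approximation of $M$. The hardest part will be this separation of degrees: no invariant currently defined on $\rrperf(R)$ is fine enough to detect individual $\rank_R(P_i)$, which is precisely why the BEH conjecture remains open in general, and why any honest attack will require introducing genuinely new structure beyond the Adams-operation framework that drives the present paper.
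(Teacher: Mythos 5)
The statement you are trying to prove is the Buchsbaum--Eisenbud--Horrocks Conjecture, which the paper records precisely as a \emph{conjecture}: it offers no proof, and indeed states explicitly that the BEH conjecture ``remains wide open despite many years of study.'' So there is no argument in the paper to compare yours against, and the only honest verdict is that your proposal does not constitute a proof --- as you yourself concede in your final paragraph. Your reduction step is sound and matches the paper's own remark that localizing at a minimal prime of the support reduces BEH to its local, finite-length form; that part is standard and unobjectionable. The fatal gap is exactly where you locate it: the Adams-operation / Frobenius machinery (whether via $\psi^k$ in odd characteristic, the Dutta-multiplicity limits, or the characteristic-two exact sequence $0 \to FP \to S^2_R P \to \L^2_R P \to 0$ that drives this paper) produces inequalities only for \emph{sums} of Betti numbers, because everything is mediated through lengths of homology of tensor, symmetric, and exterior squares, and these cannot be disassembled degree by degree. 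The identity $2^c = \sum_i \binom{c}{i}$ is suggestive but carries no logical force: knowing $\sum_i \rank(P_i) \geq \sum_i \binom{c}{i}$ says nothing about any individual term, and none of the candidate refinements you list (a graded $\psi^k$, comparison with the Koszul complex, MCM approximations) is developed far enough to close this.

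To be clear about what would be needed: a proof of BEH requires an invariant of a perfect complex with finite-length homology that is (i) bounded below termwise by the Koszul complex's Betti numbers and (ii) computable or estimable from the module $M$ alone. No such invariant is known, and your proposal does not construct one. What you have written is a correct and well-informed explanation of why the Total Rank Conjecture is accessible while BEH is not --- useful as exposition, but it proves only the weaker total-rank statement (and even that only by appeal to Walker's published results, not by a self-contained argument). You should present this as a discussion of the state of the art, not as a proof attempt.
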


Upon localizing at a minimal prime $\fp$ in the support of $P$, the BEH conjecture reduces to the following special case of it:

\begin{conj}[Local BEH Conjecture]  If $(R, \fm, k)$ is local and $M$ is a non-zero $R$-module of finite length and finite projective
  dimension, then
  $$
  \b_i(M) \geq {\dm(R) \choose i}
  $$
  where $\b_i(M)$, the {\em $i$-th Betti number of $M$}, is the rank of the $i$-th free module in its minimal free resolution. 
\end{conj}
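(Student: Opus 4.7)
The Local BEH Conjecture is open in general, so my proposal is aspirational and is modelled on the second author's earlier proof of the Total Rank Conjecture in odd characteristic. The natural first step is to replace the projective resolution by a minimal free resolution $F_\bullet$, so that $\b_i(M) = \rank_R(F_i)$; by Auslander-Buchsbaum, $F_\bullet$ has length at most $d := \dm R$. Since $M$ has finite length, $F_\bullet$ is a perfect complex with cohomology supported at $\fm$, i.e.\ a perfect complex with finite-length homology. The extremal benchmark is the Koszul complex on a system of parameters (valid when $R$ is regular and $M = k$), which achieves equality $\b_i = \binom{d}{i}$ throughout.

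The plan is to exploit Adams operations $\psi^k$ acting on the Grothendieck group of perfect complexes on $\Spec R$ with cohomology supported at $\fm$. Following Gillet-Soul\'e (via Dold-Puppe functors) and Walker, after tensoring with $\Q$ these operations act semisimply with integer eigenvalues that are powers of $k$, producing an eigenspace decomposition of $[F_\bullet]$ indexed by ``codimension'' $0, 1, \ldots, d$. Because $\chi(F_\bullet) = \len(M) > 0$, the top-codimension eigencomponent is forced to be nonzero. Comparing the eigenvalue data of $[F_\bullet]$ to that of the Koszul complex, and organizing the inequalities in terms of the ranks of each $F_i$, should translate into numerical lower bounds on the Betti numbers $\b_i(M)$.

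The principal obstacle, and the reason the statement is phrased as a conjecture rather than a theorem, is that such eigenvalue comparisons typically produce only the aggregated bound $\sum_i \b_i(M) \geq 2^d$, which is the Total Rank Conjecture (the actual focus of this paper in characteristic two), rather than the termwise bounds $\b_i(M) \geq \binom{d}{i}$ demanded by BEH. Upgrading from a total rank bound to individual Betti number bounds requires an invariant sensitive to the homological grading of $F_\bullet$, not merely to its rationalized Euler class. In characteristic two one expects to exploit the coincidence $\psi^2 \sim F$ (Frobenius acting on $K$-theory) together with the Lucas-theoretic structure of mod-$2$ binomial coefficients to extract sharper positivity statements; the key question is whether this mechanism reaches all the way to full Local BEH, or merely to the stronger form of TRC advertised in the abstract. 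Either way, the source of any asymmetry between characteristic two and odd characteristic --- where the Generalized TRC is known to fail --- should enter precisely at this refinement step.
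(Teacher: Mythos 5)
The statement you were asked to prove is stated in the paper as a \emph{conjecture}, and the paper neither proves it nor claims to; the paper's results are about the weaker Total Rank Conjecture and its short-complex extension. You correctly recognize this, and your diagnosis of why the Adams-operation/Dutta-multiplicity machinery falls short of full BEH is accurate: the eigenvalue decomposition (Gillet--Soul\'e via Dold--Puppe) together with positivity of $\chi^\infty$ yields the aggregate bound $\sum_i \b_i(M) \geq 2^{\dm R}$, because one ultimately compares $h^\infty(P \otimes_R P)$, which mixes all homological degrees, against $h^\infty(P)\beta(P)$. There is no obvious way to make this machinery sensitive to the individual $\b_i$, which is exactly the refinement the Local BEH Conjecture demands. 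Your proposal is, as you say yourself, aspirational and not a proof; but it is an honest and accurate description of what the paper does and does not do.

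One small precision worth flagging: what forces the top eigencomponent (or equivalently the Dutta multiplicity $\chi^\infty$) to be positive is not $\chi(F_\bullet) = \len(M) > 0$ per se but rather Roberts' Theorem \ref{thmRoberts}, since over a non-Roberts Cohen--Macaulay ring $\chi$ and $\chi^\infty$ can differ. In the finite-length-module case they coincide up to a positive correction, so your conclusion is fine, but the argument should route through $\chi^\infty$ as the paper does. Beyond that, you have accurately identified both the state of the art and the obstruction; there is no proof of Local BEH here to compare yours against.
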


The BEH conjecture remains wide open despite many years of study; see for instance \cite{burman2011chang}, \cite{chang1997betti}, \cite{chang2000betti},
\cite{charalambous1991betti}, \cite{charalambous1990betti}, \cite{dugger2000betti}, \cite{erman2010special}, \cite{hochster2005lower}, \cite{santoni1990horrocks}.
A closely-related but weaker form of the BEH conjecture, first proposed by Avramov and stated formally in \cite{avramov1993lower},
concerns the sum of the Betti numbers:

\begin{conj}[Avramov's Total Rank Conjecture]
  Under the same assumptions as in the BEH conjecture, we have
  $$
\sum_{i =0}^d \rank_R (P_i) \geq 2^{\codim_R(M)}.
$$
\end{conj}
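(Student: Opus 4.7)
The plan is to prove the conjecture for rings containing a field of characteristic two, in the spirit of the paper's title. I first apply the standard reductions: localizing at a minimal prime in $\supp M$ and (harmlessly) passing to the completion reduces the problem to the situation where $(R, \fm, k)$ is a complete local ring of dimension $d = \codim_R M$ with residue field of characteristic two, $M$ has finite length and finite projective dimension, and $P_\bullet$ is a minimal free resolution of $M$. The goal then becomes $\sum_i \rank P_i \geq 2^d$. By the Cohen structure theorem together with a deformation / Koszul argument, I may further assume $R$ is regular, or at worst a complete intersection, which brings matrix-factorization techniques into play.

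The second author's earlier proof in odd characteristic proceeds by using Adams operations on the Grothendieck group of perfect complexes with finite-length cohomology: after inverting $p$, the operator $\psi^p$ has eigenvalues $1, p, \dots, p^d$, and this forces a decomposition of the class $[P_\bullet]$ that yields the $2^d$ bound on the total rank. In characteristic two, $\psi^2$ is just the Frobenius and no such eigenspace decomposition is available, so this route collapses. The key new input I would supply is a substitute operation specific to characteristic two --- a characteristic-two analog of an Adams operation, built either from Frobenius-twisted perfect complexes, a squaring operation on $K$-theory, or a Steenrod-style cohomological operation --- that detects the total rank rather than the (sign-free) Euler characteristic. I would then show that its action on $[P_\bullet]$ produces enough independent contributions to force $\sum_i \rank P_i \geq 2^d$ by a rank count analogous to the odd-characteristic proof.

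The hardest step is the construction and analysis of this characteristic-two invariant. In characteristic two the signs and cancellations that make the odd-characteristic Euler-characteristic computation effective are absent, so the invariant must carry refined information (for instance a $\Z/2$-graded decomposition of the class of $P_\bullet$, or some divided-square refinement of the trace) beyond what the usual Euler characteristic can see. A secondary subtlety, once the invariant is in hand, is that the abstract promises a \emph{stronger} form of the conjecture: the invariant should not rely on $P_\bullet$ being a minimal free resolution of a module, but should apply to arbitrary perfect complexes with finite-length cohomology, corresponding to the Generalized TRC (known to fail in odd characteristic but conjecturally true in characteristic two). I expect this stronger bound to become visible once the correct invariant is identified, since by design it measures total rank rather than Euler characteristic.
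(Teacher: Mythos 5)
Your proposal correctly identifies the obstruction --- in characteristic two the Adams-operations argument of the odd-characteristic case collapses, and some replacement operation must be found --- but the replacement is exactly what you never supply. You write that the key new input is ``a characteristic-two analog of an Adams operation, built either from Frobenius-twisted perfect complexes, a squaring operation on $K$-theory, or a Steenrod-style cohomological operation,'' and then assert that once it is in hand the bound will follow ``by a rank count analogous to the odd-characteristic proof.'' That is a restatement of the problem, not a proof: the entire difficulty is to produce such an operation and to extract a total-rank (rather than Euler-characteristic) inequality from it.

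The paper's actual key input is concrete and elementary: for finitely generated projective modules $P$ over a ring of characteristic two there is a natural short exact sequence
$$
0 \to F(P) \to S^2_R(P) \to \Lambda^2_R(P) \to 0,
$$
where $F$ is extension of scalars along Frobenius and the map $F(P)\to S^2(P)$ is adjoint to $x\mapsto x^2$. Fed through the Dold--Kan extension of the (degree-two polynomial) functors $T^2, S^2, \Lambda^2$, this yields a short exact sequence of complexes $0\to F(P)\to \wt{S^2}(P)\to \wt{\Lambda^2}(P)\to 0$, complementing the characteristic-free sequence $0\to\wt{\Lambda^2}(P)\to \wt{T^2}(P)\to \wt{S^2}(P)\to 0$. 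The resulting long exact sequences in homology, combined with Roberts' asymptotic vanishing (Theorem \ref{thmRoberts2}) for Dutta multiplicities $h_i^\infty$, give $h^\infty(P\otimes_R P)\geq 2^d h^\infty(P)$, and then Lemma \ref{lem:tensorIneq} (the tensor-product inequality $h^\infty(P\otimes_R P)\leq h^\infty(P)\beta(P)$) yields $\beta(P)\geq 2^d$. None of this requires your proposed reduction to a regular or complete intersection ring; the paper only reduces to a complete local ring with algebraically closed residue field. Your ``deformation / Koszul argument'' reduction is neither justified nor used, and the matrix-factorization machinery you gesture toward plays no role. Until you actually identify the $0\to F(P)\to S^2(P)\to \Lambda^2(P)\to 0$ sequence (or an equivalent device) and show how its long exact sequence controls $h^\infty$, there is no proof here --- only a correct diagnosis of what is missing.
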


As with the BEH conjecture, the Total Rank Conjecture is equivalent to one of its special cases:

\begin{conj}[Local Total Rank Conjecture]
  If $(R, \fm, k)$ is local and $M$ is a non-zero, finite length $R$-module, then
  $$
  \beta^R(M) \geq 2^{\dm(R)}
  $$
  where $\beta^R(M) = \sum_i \beta_i^R(M)$ is the total Betti number of $M$. 
\end{conj}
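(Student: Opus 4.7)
The plan is to adapt the Adams-operations strategy used by the second author in odd characteristic, while exploiting structural features specific to characteristic two. After completing and enlarging the residue field, one may assume $(R, \fm, k)$ is complete with $k$ perfect; a minimal free resolution $P_\bullet \to M$ is then a perfect complex with finite-length homology, and $\beta^R(M) = \sum_i \rank_R(P_i)$ is the total rank of $P_\bullet$ viewed as an object of $\Perf_{\fl}(R)$. The goal is thus to bound from below, by $2^{\dim R}$, the total rank of an object of $\Perf_{\fl}(R)$ whose Euler characteristic is the (nonzero) length of $M$.

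The central tool is a suitable Adams-type endomorphism acting on an additive invariant of $\Perf_{\fl}(R)$. In odd characteristic $p$, Walker's operation $\psi^p$ has eigenvalues in a finite controllable set and a trace computation produces the factor $p^{\dim R}$. In characteristic two the naive $\psi^2$ collapses, but one can exploit the Frobenius endomorphism together with the $\Z/2$-periodicity of the category $\MF$ of matrix factorizations: via Eisenbud's correspondence, sufficiently high syzygies of an object of $\Perf_{\fl}(R)$ live in $\MF$ over a hypersurface layer, and Frobenius acts on matrix factorizations in a structurally transparent way. A Frobenius-twisted trace on an appropriate cyclic invariant such as $\perHoch$ should detect the total dimension of $\Tor^R_*(M,k)$ rather than merely its Euler characteristic, turning the rank estimate into a pointwise bound on a trace.

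The main obstacle is precisely the step of converting a trace identity into a lower bound on the \emph{unsigned} sum $\sum_i \beta_i^R(M)$. In odd characteristic this is handled by extracting a fixed eigenspace of $\psi^p$, which is unavailable in characteristic two because the relevant eigenvalues coincide. To circumvent this, I would split the Betti numbers into even and odd parts using the $\Z/2$-grading on $\MF$ and try to prove the separate lower bounds $\sum_{i \text{ even}} \beta_i^R(M) \geq 2^{\dim R - 1}$ and $\sum_{i \text{ odd}} \beta_i^R(M) \geq 2^{\dim R - 1}$. These would simultaneously yield the Local Total Rank Conjecture and the Generalized Total Rank Conjecture discussed in the abstract as conjecturally holding in characteristic two, consistent with the paper's claim of a strictly stronger result in this setting. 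The hard part will be showing that the Frobenius-twisted trace genuinely distributes across the $\Z/2$-grading with the correct signs (or absence of signs) in characteristic two.
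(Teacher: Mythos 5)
Your proposal is a strategy sketch, not a proof: you explicitly flag the decisive step (converting a trace identity into an unsigned lower bound, and making the Frobenius-twisted trace respect the $\Z/2$-grading) as unresolved, and the intermediate claims about $\MF$ and $\perHoch$ are asserted rather than established. This is a genuine gap. It is also a different route from the paper's, which never passes through matrix factorizations, Eisenbud's correspondence, or cyclic/periodic Hochschild invariants. The key device in the paper, entirely absent from your plan, is the natural short exact sequence of finite free $R$-modules special to characteristic $2$,
$$
0 \to F(P) \to S^2_R P \to \L^2_R P \to 0,
$$
where $F$ is extension of scalars along Frobenius. Fed through the Dold--Kan extension of the non-additive functors $S^2$, $\L^2$, $T^2$, $F$ to nonnegative complexes, this yields a short exact sequence of complexes and a long exact sequence on homology; combined with the always-available $0 \to \wt{\L^2}P \to \wt{T^2}P \to \wt{S^2}P \to 0$ and the identification $\wt{T^2}(P)\simeq P\otimes_R P$, it produces the needed inequalities among homology lengths.

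The passage to $2^{\dim R}$ is then purely a matter of Dutta multiplicities: $h^\infty(F(P)) = 2^d\,h^\infty(P)$ holds by definition, Roberts' theorem forces $h^\infty_i(P)=0$ for $i>0$ and $h^\infty_0(P)>0$ when $P$ is a tiny complex (in particular a minimal free resolution of $M$), and the elementary estimate $h^\infty(P\otimes_R P)\le h^\infty(P)\,\beta(P)$ closes the argument, giving $\beta(P)\ge 2^d$ directly rather than via an even/odd split of size $2^{d-1}+2^{d-1}$. Your framework has no engine delivering the factor $2^d$ once the eigenvalue argument collapses; the Frobenius/symmetric-square/exterior-square sequence is precisely the replacement for that engine and is the idea you are missing. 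Absent it, nothing in your proposal would yield even the $m=1$ Carlsson-type bound, let alone the full Local Total Rank Conjecture.
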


The Total Rank Conjecture was proven by the second author \cite{walker2017total} in the following cases:
\begin{enumerate}
    \item when $R$ is locally a complete intersection and $M$ is $2$-torsion free, or
    \item when $R$ has characteristic $p$ for an \emph{odd} prime $p$. 
    \end{enumerate}

    It is a consequence of the New Intersection Theorem that if a local ring $R$ admits a module $M$ of finite length and projective dimension, then $R$ must be Cohen-Macaulay;
    that is,  the Local Total Rank Conjecture is vacuous for non-Cohen-Macaulay local rings.
    By enlarging the types of complexes considered, one may formulate a meaningful version of this conjecture even in the non-Cohen-Macaulay case:

\begin{defn} For a Noetherian commutative ring $R$, a {\em short  complex} is a non-exact complex of finite rank projective
  $R$-modules of the form
  $$
  P = \left(\cdots \to 0 \to P_c \to \cdots \to P_1 \to P_0 \to 0 \to
    \cdots\right)
  $$
  and such that $c = \codim_R(H(P)) := \max_j\{ \codim_R H_j(P)\}$.

    For a local ring $(R, \fm, k)$, a {\em tiny complex} is a short complex $P$ such that $\codim_R(H(P)) = \dm(R)$; that is, a tiny complex is a non-exact complex of finite
  rank free $R$-modules  of  form
  $$
  P = \left(\cdots \to 0 \to P_{\dm(R)} \to \cdots \to P_1 \to P_0 \to 0 \to
    \cdots\right)
  $$
  having finite length homology.
\end{defn}

Short complexes are so-named since in general, for a non-exact complex $P$ of projective modules
concentrated in homological degrees $[0, c]$, the New Intersection Theorem implies that $c \geq \codim_R(H(P))$.
Tiny complexes are both short and of smallest possible dimension. 

\begin{conj}[The Total Rank Conjecture for Short Complexes] For any short complex $P$ over $R$, we have
  $\sum_i \rank_R(P_i) \geq  2^{\codim_R(H(P))}$.  In particular, for any tiny complex $P$ over a local ring $R$, we have $\sum_i \rank_R(P_i) \geq  2^{\dm R}$. 
\end{conj}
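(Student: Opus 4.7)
The strategy is to reduce the Short Complex Total Rank Conjecture to the tiny complex case over a local ring by localization, and then to apply the Adams operations framework from \cite{walker2017total} and the earlier sections of this paper. Given a short complex $P$ over $R$ with $c = \codim_R H(P)$, pick $j_0$ with $\codim_R H_{j_0}(P) = c$ and let $\fp$ be a minimal prime of $\ann H_{j_0}(P)$, so that $\height \fp = c$. Since $\sum_i \rank_R P_i = \sum_i \rank_{R_\fp}(P_\fp)_i$, it suffices to bound the total rank of $P_\fp$, a non-exact complex of finite rank free $R_\fp$-modules in degrees $[0, \dim R_\fp]$ for which $(H_{j_0}(P))_\fp$ has finite length and $\codim_{R_\fp} H(P_\fp) = \dim R_\fp$.

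Next, I would pass from this local short complex to an honest tiny complex. The subtlety is that for $i \neq j_0$ the localized homology $(H_i(P))_\fp$ may still have positive-dimensional support in $R_\fp$. I would address this by either refining the choice of $\fp$ to be minimal in $\bigcup_i \supp H_i(P)$ among primes of height $c$ (when such a prime exists), or by modifying $P$ through quotienting by an element chosen to annihilate the positive-dimensional homology pieces while preserving $H_{j_0}(P)_\fp$ and the total rank. Once a genuine tiny complex $P$ is in hand over a local ring $(R, \fm, k)$ of dimension $d$ containing a field, one applies the Adams operations on the Grothendieck group of perfect complexes with finite length homology. These are constructed via the Frobenius endomorphism in positive characteristic---Walker's original work \cite{walker2017total} handles odd $p$, and the main results of this paper handle $p = 2$---and in characteristic zero one may either invoke a $\lambda$-ring structure on $K_0$ or reduce to positive characteristic through Cohen's structure theorem and an appropriate descent.

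The principal obstacle is the passage from a local short complex to a tiny complex: because $\codim H(P)$ is defined as the maximum over homology degrees, a single localization may leave several non-finite-length homology modules in play, and eliminating these without disturbing the total rank or the essential codimension-$c$ homology demands careful manipulation of the primary decomposition of $H(P)$. A secondary obstacle is the characteristic zero case of the Adams operation construction, which while standard in spirit demands a distinct argument from the Frobenius-based constructions in positive characteristic.
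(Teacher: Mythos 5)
Your overall plan---localize to reduce to a tiny complex over a local ring, then run the Adams-operation/Frobenius argument, and handle characteristic zero by reduction to characteristic $p$---is the same as the paper's (see the discussion immediately following the conjecture and the proof of Theorem \ref{thm:localTinyAnalog}). However, the ``principal obstacle'' you identify is not a real obstacle, and your proposed fixes would not be needed once the definition of $\codim_R(H(P))$ is read correctly.

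The source of your worry is that you take $\codim_R(H(P))$ to mean $\max_j \codim_R H_j(P)$, which is literally what the paper writes, and with that reading it is indeed possible for a prime $\fp$ of height $c$ in $\supp(P)$ to localize some $H_i(P)$ to a module of infinite length. But that reading is inconsistent with the rest of the paper. Two sentences earlier the authors justify the name ``short complex'' by appealing to the New Intersection Theorem in the form ``$c \geq \codim_R(H(P))$ for a non-exact complex in degrees $[0,c]$,'' and this inequality is \emph{false} for the max (already for $0 \to R^2 \xra{(x,y)} R \to 0$ over $k[x,y]$, where $H_0 = k$ has codimension $2$ but the complex has length $1$); it is the standard NIT only for $\codim_R(H(P)) := \height(\ann_R H(P)) = \min_j \codim_R H_j(P)$. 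Likewise, the ``that is'' restatement in the definition of tiny complex---that a short complex with $\codim H(P) = \dim R$ is the same as a complex in degrees $[0,\dim R]$ with finite length homology---is only correct with the $\min$/annihilator reading. With that reading, if $\fp$ has height $c$ and lies in $\supp(P)$, then for every $j$ with $\fp \in \supp H_j(P)$ one has $\height(\fp) \geq \codim_R H_j(P) \geq c = \height(\fp)$, so $\fp$ is a minimal prime of $\ann H_j(P)$ and $H_j(P)_\fp$ has finite length. Thus $P_\fp$ is automatically a tiny complex; no ``refinement of $\fp$,'' quotienting, or manipulation of primary decompositions is necessary.

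Two smaller remarks. First, your phrase ``pick $j_0$ with $\codim_R H_{j_0}(P) = c$'' should then select a $j_0$ achieving the minimum, i.e., one with $\height(\ann H_{j_0}(P)) = c$, and any minimal prime of $\ann H_{j_0}(P)$ of height $c$ works. Second, for the tiny-complex step and the passage to characteristic zero your proposal matches the paper only at the level of strategy: the paper's proof in positive characteristic goes through Dutta multiplicities $h^\infty$ and Lemma \ref{lem:tensorIneq} together with the Kurano--Roberts identity $\chi^\infty(\wt{S^2}P) - \chi^\infty(\wt{\L^2}P) = 2^d\chi^\infty(P)$ (or, in characteristic $2$, the Frobenius short exact sequence of Lemma \ref{lem:fundamentalSES}), and the characteristic zero case is handled by the Kurano--Roberts form of Hochster's metatheorem rather than a $\lambda$-ring argument on $K_0$; you would need to flesh out that part to have a complete proof.
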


In this non Cohen-Macaulay setting, the Total Rank Conjecture for Short Complexes predicts that the short complexes of minimal rank arise as Koszul complexes on a system of parameters. As with the previous conjectures, the Total Rank Conjecture for Short Complexes reduces to the special case of tiny complexes,
as is seen by localizing at a minimal prime in the support of a short complex.
It generalizes the original Total Rank Conjecture: Given a non-zero module of finite length and finite projective dimension over a local ring, its minimal free resolution is tiny.

One of the main results of this paper is a proof of the  Total Rank Conjecture in characteristic $2$. In fact, we prove slightly more:

\begin{thm}\label{thm:char2TotalRankC} Suppose $(R, \fm, k)$ is a local ring with $\chr(R) = 2$ and $P$ is a complex of finite rank free $R$-modules of the form
  $$
  P = \left(\cdots \to 0 \to P_{\dm(R)+1} \to P_{\dm(R)} \to \cdots \to P_1 \to P_0 \to 0 \right)
  $$
  having finite length homology. Then $\sum_j \rank_R(P_j) \geq 2^{\dm(R)}$. In particular, the Total Rank Conjecture (both the original version and the one for short complexes) holds for rings of characteristic two.  
\end{thm}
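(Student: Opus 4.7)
The plan is to build a characteristic-two replacement for the Adams-operation argument of \cite{walker2017total}. In the odd-characteristic case, the operator $\psi^p$ acts with distinct eigenvalues on the graded pieces of $K_0$ of perfect complexes with finite-length homology, and these eigenvalues resolve the individual ranks $\rank_R P_i$ finely enough to force $\sum_i \rank_R P_i \geq 2^{\dm R}$. In characteristic two the relevant eigenvalues collapse modulo $2$, so $\psi^2$ is too coarse on its own, and one needs a refinement that still carries $2$-adic information.

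First I would carry out the standard reductions: replace $R$ by its completion, write $R = Q/I$ for a complete regular local ring $(Q, \fn)$ of the same dimension $d$, and lift $P$ to a complex $\widetilde{P}$ of finitely generated free $Q$-modules concentrated in homological degrees $[0, d+1]$. Since $\chr(R) = 2$, the Frobenius of $Q$ is finite flat of rank $2^d$, and this numerology should ultimately account for the $2^d$ appearing on the right-hand side. I would then introduce a characteristic-two analog $\nu$ of the Adams operation --- constructed, say, through a divided-power or truncated Witt-vector thickening of $R$ --- designed to be additive on short exact sequences and to specialize correctly on the Koszul complex of a regular system of parameters of $Q$.

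The target inequality would have the two-sided form $\nu(P) \leq \sum_i \rank_R P_i$ and $\nu(P) \geq 2^d$ for every non-exact $P$ as in the theorem. The upper bound should be formal, coming from additivity applied term by term. The lower bound is the heart of the argument; I expect it to reduce to an explicit computation on the Koszul benchmark combined with a monotonicity statement guaranteeing that $\nu$ cannot shrink along any non-exact deformation.

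The main obstacle will be isolating the correct invariant $\nu$, because the most natural candidates (Frobenius pushforward, the mod-$2$ reduction of $\psi^2$, or their composites) lose precisely the factor of $2$ that one is trying to detect; one has to carry out the construction of $\nu$ in a setting where the parity is still visible, which is where genuinely new characteristic-two input must enter. A secondary difficulty is absorbing the extra term $P_{d+1}$, which takes the theorem outside the tiny-complex regime. I would handle this by a surgery argument: either split off an isomorphism contained in the map $P_{d+1} \to P_d$ to reduce to a tiny complex, or else use the finite-length homology assumption to bound $\rank_R P_{d+1}$ in terms of the lower-degree ranks so that the key inequality survives the additional term.
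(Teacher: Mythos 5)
Your proposal correctly identifies the right general direction---one wants a characteristic-two substitute for the Adams-operation machinery of \cite{walker2017total}---but it stops short of the key construction and, where it does commit to specifics, those specifics do not work. You candidly acknowledge the central gap yourself (``genuinely new characteristic-two input must enter''), so let me be concrete about what is missing and where your proposed fixes would fail.

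The missing ingredient is not a new operation $\nu$ built from divided powers or Witt vectors, but a natural short exact sequence that exists only in characteristic $2$:
$$
0 \to F(P) \to \wt{S^2}(P) \to \wt{\L^2}(P) \to 0,
$$
where $F$ is extension of scalars along the Frobenius and $\wt{S^2}$, $\wt{\L^2}$ are Dold--Kan extensions of the symmetric and exterior square functors to nonnegatively graded complexes. Paired with the universal sequence $0 \to \wt{\L^2}(P) \to \wt{T^2}(P) \to \wt{S^2}(P) \to 0$ and passing to Dutta multiplicities $h^\infty_j(-) = \lim_e h_j(F^e(-))/p^{de}$, these two long exact sequences produce the inequality $h^\infty(P\otimes_R P) \geq 2^d\, h^\infty(P)$, and the theorem follows from the elementary bound $h^\infty(P\otimes_R P)\leq h^\infty(P)\,\beta(P)$ together with the positivity of $h^\infty(P)$. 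The factor $2^d$ enters through the identity $h^\infty(F(P))=2^d h^\infty(P)$ for the $d$-dimensional ring $R$ itself, not through a rank-$2^d$ flat Frobenius on a regular lift $Q$. Indeed, your reduction to a complex $\wt P$ over a regular $Q = R$'s Cohen--Macaulayfication does not preserve the finite-length-homology hypothesis, so the lifting step breaks down before the numerology can be used.

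Your two proposed ways of absorbing the extra term $P_{d+1}$ also do not survive scrutiny. Splitting an isomorphism off of $P_{d+1}\to P_d$ is unavailable when $P$ is minimal (and one may always reduce to the minimal case), and there is no a priori bound on $\rank_R P_{d+1}$ in terms of the lower ranks. What actually controls the extra term is a theorem of Roberts: for a finite free complex with finite-length homology concentrated in degrees $[0,m]$ over a $d$-dimensional complete local ring, $h^\infty_j(P)=0$ for $j>m-d$. With $m=d+1$ this forces $h^\infty_j(P)=0$ for $j\geq 2$, which is exactly what makes the long-exact-sequence bookkeeping close up and yields the inequality above. Without this vanishing statement the limiting argument does not terminate, and this is the genuine reason the theorem accommodates complexes one step longer than tiny ones.
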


Before describing the other results of this paper, we briefly describe 
the main new idea that lies at the heart of the proof of this theorem.

In the second author's paper\cite{walker2017total}, the second tensor, symmetric, and exterior powers of a complex 
of free modules $F$, written as $T^2_R(F) = F \otimes_R F$, $S^2_R(F)$ and $\L^2_R(F)$, respectively,  
play vital roles. When $2$ is invertible in $R$, one may define $S^2_R(F)$ and $\L^2_R(F)$
to be the eigenspaces of the evident involution operator on $T^2_R(F)$, and the invertibility of $2$ implies that there is a natural splitting 
$$T^2_R (F) = S^2_R (F) \oplus \L^2_R (F).$$
Combining this splitting with the fact that 
$\chi S^2_R(F) - \chi \L^2_R(F)$ is equal to the Euler characteristic of the second Adams operation leads quickly to a proof of the Total Rank Conjecture in the cases listed above.  

When $\chr(R) = 2$, two related problems arise. First, the ``na\"ive" definitions of $S^2$ and $\Lambda^2$ for complexes as eigenspaces for the involution operators are badly behaved --- for instance, they don't preserve quasi-isomorphisms. This problem, however, can be circumvented by employing the Dold-Kan correspondence to pass to \emph{simplicial} modules, for which well-behaved symmetric and exterior power functors always exist. More precisely, given a nonnegatively graded complex of free $R$-modules $F$, we may convert $F$ into a simplicial $R$-module via Dold-Kan, apply either $S^2$ or $\L^2$ degree-wise, then pass back to the category of complexes. This process yields canonically defined functors $\wt{S^2}$ and $\wt{\L^2} $ that are now guaranteed to preserve quasi-isomorphisms on the subcategory of bounded complexes of finite rank free $R$-modules.

The second, more subtle problem is that the canonical short exact sequence
$$
0 \to \L^2_R P \to P \otimes_R P \to S^2_R P \to 0,
$$
defined on the category of finite rank free $R$-modules $P$, admits a natural splitting only when $2$ is invertible in $R$.
In other words, when $\chr(R) = 2$, even though we may still associate to a complex $F$ a short exact sequence
    $$
    0 \to \wt{\L^2} F \to \wt{T^2} F \to \wt{S^2} F \to 0
    $$
    of complexes, 
the boundary map in the associated long exact sequence in homology may fail to be $0$. 
This invalidates the identity $h ( \wt{T^2}(F)) = h (\wt{S^2}(F))  + h(\wt{\L^2} F )$ (where $h$ denotes the length of the total homology of a complex) and the original 
argument is doomed.\footnote{ It remains true that $ h(\wt{T^2}(F))  \leq h (\wt{S^2}(F) )  +  h( \wt{\L^2} F )$, but the inequality is in the
wrong direction for the original proof to work.}

With this in mind, we may now describe the key new idea:
When $\chr(R) = 2$, although we lose such a natural splitting, 
we instead gain access to a natural short exact sequence unique to the characteristic $2$ case:
$$
0 \to FP \to S^2_R P \to \L^2_R P \to 0,
$$
where $F$ denotes extension of scalars along the Frobenius map; in other words, characteristic $2$ is distinguished for being the only case where there is an honest categorification of the Adams operation identity $\psi^2 ([F]) = [S^2 F] - [\L^2 F]$.
This sequence is an essential ingredient in many of the proofs in this paper.

One surprising aspect of Theorem \ref{thm:char2TotalRankC} is that it excludes in characteristic $2$ the kinds of counterexamples to the ``Generalized Total Rank Conjecture'' found
by Iyengar and the second author. The Generalized Total Rank Conjecture predicted that for \emph{any} bounded complex of free modules
\begin{equation} \label{E414}
  P = (\cdots \to 0 \to P_m \to \cdots \to P_0 \to 0)
  \end{equation}
  over a local ring $R$ having  finite length homology,
one must have $\sum_j \rank_R(P_j) \geq 2^{\dm(R)}$. In \cite{iyengar2018examples} it is shown that, so long as $\dm(R) \geq 8$ and $\chr(k) \ne 2$, there exists a complex (even over a regular local ring) with $m = \dm(R) + 1$, $H_0(P) = H_1(P) = k$ and $H_j(P) = 0$ for all $j \notin \{0,1\}$ such that $\sum_j \rank_j P_j < 2^{\dm(R)}$. 
Theorem \ref{thm:char2TotalRankC} shows that no such example exists in characteristic $2$.
Moreover, even if $m = \dm(R) + 2$, there is some evidence that the Generalized Total Rank Conjecture may hold in characteristic $2$; see Theorem \ref{thm:012homology} below.

By combing Theorem \ref{thm:char2TotalRankC}, previous results of the second author, and a reduction to characteristic $p$ argument (``Hochster's metatheorem"),
we prove:

\begin{thm}\label{thm:shortComplexAnalog}
  The Total Rank Conjecture (both the original formulation and that for short complexes) holds for all noetherian $k$-algebras $R$. 
  \end{thm}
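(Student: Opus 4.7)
The strategy is to reduce to the local tiny complex case, and then to positive characteristic, where Theorem \ref{thm:char2TotalRankC} handles $p=2$ and the second author's earlier Adams-operation argument handles $p$ odd. As with the other rank conjectures above, localization at a minimal prime in the support of the complex reduces both the original Total Rank Conjecture and its short complex extension to a single local statement: for every local ring $(R,\fm,k)$ containing a field, every tiny complex $P$ over $R$ satisfies
\[
\sum_{j} \rank_R(P_{j}) \;\geq\; 2^{\dim R}.
\]
It suffices to verify this inequality, so I would split according to $\chr(k)$.

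Suppose $\chr(k) = p > 0$. Then $R \supseteq \mathbb{F}_{p}$, so $\chr(R) = p$. When $p = 2$, Theorem \ref{thm:char2TotalRankC} applies directly and is in fact stronger than needed (it permits one additional free module at the top). When $p$ is odd, the bound follows from the second author's argument in \cite{walker2017total}; although phrased there for minimal free resolutions of finite length modules, the Adams-operation machinery applies to any perfect complex over $R$ of length $\dim R$ with finite length homology, and a tiny complex is exactly such an object.

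For the remaining case $\chr(k) = 0$, I would invoke Hochster's metatheorem to descend to positive characteristic. Fix a system of parameters $x_1,\ldots,x_d$ for $R$, let $A_0 \subseteq R$ be the finitely generated $\Z$-subalgebra generated by the $x_i$ together with all matrix entries of the differentials of $P$, and set $\mathfrak{q} = \fm \cap A_0$. Generic freeness supplies a single nonzero integer $s$ such that, after inverting $s$, the homology of $P$ and enough ``witnesses'' of its finite-length nature (e.g.\ explicit expressions certifying $\fm^N$ annihilates each cycle class) become flat over $\Z[1/s]$. For any prime $p \nmid s$ and any suitable prime of $A_0/pA_0$ lying over $\mathfrak{q}$, the resulting localization is a local ring of characteristic $p$ and dimension $d$ over which the reduced complex is still tiny with unchanged ranks; the positive characteristic case then delivers the inequality. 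The principal technical obstacle is arranging the descent so that the ranks of the $P_j$, the dimension of the base ring, and the finite length of the homology are \emph{simultaneously} preserved under mod-$p$ reduction, which is exactly the content that Hochster's metatheorem packages in this setting.
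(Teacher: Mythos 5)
Your proposal matches the paper's strategy: reduce by localization to tiny complexes over a local ring, split into $\chr(k)=2$ (handled by Theorem~\ref{thm:char2TotalRankC} / Corollary~\ref{MainCor}), $\chr(k)=p$ odd (the Adams-operation argument of~\cite{walker2017total}), and $\chr(k)=0$ (reduction to prime characteristic via Hochster's metatheorem). Two remarks on where the paper is more precise. For odd $p$, saying ``the machinery applies'' glosses over the actual adaptation: for a tiny complex that is not a resolution one must pass to Dutta multiplicities, split $0 \to \wt{\L^2}P \to \wt{T^2}P \to \wt{S^2}P \to 0$ using $\tfrac12$, and invoke the Kurano--Roberts computation $\chi^\infty(\wt{S^2}P) - \chi^\infty(\wt{\L^2}P) = 2^d\chi^\infty(P)$ from~\cite{KRAdamsOps} together with Theorem~\ref{thmRoberts} ($h^\infty(P)=\chi^\infty(P)>0$) and Lemma~\ref{lem:tensorIneq}; the original argument in~\cite{walker2017total} used $h$ and $\chi$ for modules, so one genuinely needs the $h^\infty$ version here. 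For characteristic $0$, you sketch the generic-freeness descent from scratch, whereas the paper simply cites the packaged lemmas in~\cite{KRAdamsOps} that hand you a local ring $R'$ of prime characteristic and a complex $P'$ with the same dimension, the same ranks, and the same homology lengths. Your sketch identifies the right obstruction (simultaneous preservation of ranks, dimension, and finite-length homology), and it is indeed exactly what those lemmas package, so this is a difference of detail rather than of route.
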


  The Total Rank Conjecture does remain open for rings of mixed characteristic. Partial results in the mixed characteristic setting were previously obtained by
  the second author for Cohen-Macaulay rings that are ``quasi-Roberts rings'' (e.g.,
  compete intersections) and that have residual characteristic not $2$. Another consequence of our main Theorem \ref{thm:char2TotalRankC} is a slightly weaker result, one that is
  off by a factor of $\frac12$ from the conjectural lower bound, that holds for all mixed characteristic rings of residual
  characteristic equal to $2$:

  \begin{cor} If $(R, \fm, k)$ is  local ring with $\chr(k) = 2$ and $P$ is a tiny complex over $R$, then $\sum_j \rank_R(P_j) \geq 2^{\dm(R)-1}$. In particular, for any module $M$
    of finite length over such a ring we have $\beta(M) \geq 2^{\dm(R)-1}$. 
  \end{cor}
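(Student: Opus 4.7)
Since $\chr(k) = 2$ forces $2 \in \fm$, the ring $R' := R/2R$ is a nonzero local ring of characteristic two with maximal ideal $\fm' = \fm/2R$ and, by Krull's Hauptidealsatz, $\dm(R') \geq \dm(R) - 1$. When $\chr(R) = 2$ we have $R' = R$ and Theorem \ref{thm:char2TotalRankC} applies directly to give the stronger bound $2^{\dm(R)}$, so assume $\chr(R) \neq 2$. The plan is to apply Theorem \ref{thm:char2TotalRankC} to the complex $P' := P/2P$ viewed over $R'$. This is a complex of finite rank free $R'$-modules concentrated in degrees $[0, \dm(R)] \subseteq [0, \dm(R')+1]$, fitting exactly into the hypothesis range of Theorem \ref{thm:char2TotalRankC}; the crucial combinatorial point is that the characteristic-two theorem permits one degree beyond $\dm(R')$, precisely absorbing the potential dimension drop from $R$ to $R'$.

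Two hypotheses must be verified for $P'$ over $R'$. First, $H(P')$ should have finite length: for any non-maximal prime $\fq' \in \Spec(R')$ corresponding to $\fq \in \Spec(R)$ with $2 \in \fq \subsetneq \fm$, the localization $P_\fq$ is a bounded acyclic complex of finitely generated free $R_\fq$-modules (since $H(P)_\fq = 0$ by the finite length hypothesis), hence contractible; the contracting homotopy descends to $P'_{\fq'} = P_\fq / 2 P_\fq$, giving $H(P')_{\fq'} = 0$. Second, $P'$ should be non-exact: decomposing $P = P^{\min} \oplus C$ with $C$ contractible and $P^{\min}$ a minimal complex (all differentials in $\fm$), the non-exactness of $P$ forces $P^{\min} \neq 0$, so $P \otimes_R k \simeq P^{\min} \otimes_R k$ is a complex with zero differentials and nonzero terms. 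Were $P'$ acyclic, it would be contractible (as a bounded complex of finitely generated free modules over a local ring), so $P \otimes_R k = P' \otimes_{R'} k$ would be acyclic, a contradiction.

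Theorem \ref{thm:char2TotalRankC} applied to $P'$ over $R'$ then yields
\[
\sum_j \rank_R(P_j) \;=\; \sum_j \rank_{R'}(P'_j) \;\geq\; 2^{\dm(R')} \;\geq\; 2^{\dm(R)-1}.
\]
For the ``in particular'' claim, if the finite length module $M$ has finite projective dimension then the New Intersection Theorem forces $R$ Cohen-Macaulay with $\pd_R M = \dm(R)$, so the minimal free resolution of $M$ is a tiny complex; if $M$ has infinite projective dimension then $\beta(M) = \infty$ trivially satisfies the bound. The subtlest step is the non-exactness verification: were $2$ known to be a non-zerodivisor on $R$, one could directly use the long exact sequence coming from the short exact sequence $0 \to P \xrightarrow{2} P \to P' \to 0$, but in mixed characteristic this sequence may fail to be short exact, so the minimal-complex argument provides the cleaner uniform workaround.
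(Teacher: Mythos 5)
Your proposal is correct and takes the same route as the paper: reduce modulo $2$ and apply the characteristic-two theorem to $P/2P$ over $R/2R$, using $\dim(R/2R) \geq \dim(R) - 1$. The paper's own proof is a one-liner that leaves the hypothesis checks implicit; you have supplied the verifications (finite length of $H(P/2P)$, non-exactness via the minimal-plus-contractible decomposition, and the fit of the degree range $[0,\dim R] \subseteq [0, \dim(R/2R)+1]$), all of which are correct.
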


In this paper we also address the question of when the lower bound in the Local Total Rank Conjecture is met. 
  For any local ring $(R, \fm, k)$ of dimension $d$, the Koszul complex $K$ on any system of parameters forms a tiny complex whose total rank is exactly $2^d$. This motivates:

  \begin{conj}[Extended Total Rank Conjecture] \label{ExtendedConjecture}
    If $P$ is a tiny complex over a local ring $R$ of dimension $d$, then $\sum_i \rank(P_i) \geq 2^d$ with equality holding if and only if $P$ is isomorphic
    to a Koszul complex on a system of parameters. In particular, for
    a local ring $(R, \fm)$ and a module $M$ of finite
    length, if $\beta(M) = 2^{\dm R}$, then $M \cong R/(x_1, \dots, x_d)$ for some system of parameters $x_1, \dots, x_d \in \fm$. 
  \end{conj}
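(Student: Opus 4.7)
The inequality $\sum_i \rank(P_i) \geq 2^d$ is supplied by Theorem~\ref{thm:char2TotalRankC} when $\chr(R)=2$ and by Theorem~\ref{thm:shortComplexAnalog} whenever $R$ contains a field; the mixed characteristic case remains open. My proposal therefore focuses on the substantive half of the equality clause: if $P$ is tiny with $\sum_i \rank(P_i) = 2^d$, then $P \cong K(x_1,\dots,x_d)$ for some system of parameters. The converse is immediate, and we may replace $P$ by its minimal part at the start, since any contractible summand $R \xra{\id} R$ contributes $2$ to the total rank and would violate equality.

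The plan is to first reduce to positive characteristic via Hochster's metatheorem, the same reduction that underlies Theorem~\ref{thm:shortComplexAnalog}, landing in a complete local ring of characteristic $p$. The proof of Theorem~\ref{thm:char2TotalRankC} (and the analogous odd-characteristic arguments of the second author) produces, for each $n$, an endomorphism $\phi^n\colon P \to P$ lifting a Frobenius-type operator on the finite length homology $H_*(P)$, and the bound $2^d$ is obtained by squeezing certain eigenvalue sums for $\phi^n$ acting on the individual $P_i$. When equality holds, these sums must be extremal, and I expect this extremality to pin down each individual rank as $\rank(P_i) = \binom{d}{i}$. With $\rank(P_d) = 1$, the top differential is then encoded by a sequence $x_1,\dots,x_d \in \fm$, which the finite length hypothesis on $H_*(P)$ forces to be a system of parameters. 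A comparison map $K(x_1,\dots,x_d) \to P$, built by sending a generator of $K_d$ to a generator of $P_d$ and extending degree by degree, should upgrade to a chain isomorphism by a Nakayama-style argument once the ranks match in every degree and $P$ is minimal. Applying the resulting structure theorem to the minimal free resolution of a finite length module $M$ with $\beta(M)=2^d$ then yields $M \cong R/(x_1,\dots,x_d)$.

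The chief obstacle is the implication $\sum_i \rank(P_i) = 2^d \;\Rightarrow\; \rank(P_i) = \binom{d}{i}$ for all $i$. The proof of Theorem~\ref{thm:char2TotalRankC} is tight only in the total, not visibly tight degree by degree, so extracting the individual rank constraints will likely require a refined eigenvalue analysis, perhaps by tracking the action of $\phi^n$ on the graded pieces of a Koszul-type filtration of $P$, or by an inductive argument on $d$ that peels off a single parameter at each stage. A secondary obstacle is the rigidity step itself: even with matched Betti numbers, producing a genuine isomorphism with a Koszul complex is a strong structural assertion, and the numerical machinery of this paper may need to be supplemented by the DG-algebra structure on the minimal free resolution in order to identify $P$ with an exterior algebra on its top generator.
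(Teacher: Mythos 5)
This statement is a \emph{conjecture}, not a theorem, and the paper does not prove it in general. What the paper establishes (Theorem~\ref{ThmRobertsRings}, proved as Theorem~\ref{thmA}) is only the special case where $R$ is Cohen--Macaulay, quasi-Roberts, and of characteristic~$2$; the odd-characteristic quasi-Roberts case is from \cite{walker2017total}. Immediately after the proof of Theorem~\ref{thmA} the authors explicitly state that they do not know whether $\beta(M) = 2^d$ forces $M$ to be a quotient by a regular sequence over a general Cohen--Macaulay ring. So the ``only if'' direction you are attempting is genuinely open.

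Beyond that, your proposed strategy departs from the one used in the paper's partial result and runs into an obstacle that appears fatal at the first step. The reduction to positive characteristic via Hochster's metatheorem (Kurano--Roberts~\cite{KRAdamsOps}) preserves only the numerical invariants $\dim R$, $\rank_R(P_i)$, and $\ell_R(H_i(P))$; it produces a \emph{new} ring $R'$ and a \emph{new} complex $P'$ with matching numbers, with no structural comparison map between $P$ and $P'$. This is adequate for proving the inequality $\rank_R(P) \geq 2^d$, but it cannot transport a conclusion of the form ``$P'$ is isomorphic to a Koszul complex over $R'$'' back to an isomorphism statement about $P$ over $R$. A structural identification simply cannot be recovered from a numerical reduction of this type, so the characteristic-zero (and mixed-characteristic) cases cannot be handled this way.

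For the characteristic-$2$ quasi-Roberts case that the paper does handle, the route is quite different from what you sketch. There is no eigenvalue analysis and no attempt to pin down $\rank(P_i) = \binom{d}{i}$ degree by degree. Instead, the proof of Theorem~\ref{thmA} shows that $\beta(M) = 2^d$ forces two things simultaneously: (i) $\L^2_R(M) = 0$, via the refined lower bound $\beta(M) \geq 2^d + \tfrac{2\len_R(\L^2_R M)}{\len_R(M)}$, which makes $M$ cyclic, say $M \cong R/I$; and (ii) equality in the inequality $\len_R(M)\beta(M) \geq h(P\otimes_R P)$, which forces the differential on $R/I \otimes_R P$ to vanish, hence $\Tor_1^R(R/I, R/I) \cong I/I^2$ is a free $R/I$-module. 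One then invokes the Ferrand--Vasconcelos theorem to conclude $I$ is generated by a regular sequence. Your proposal's ``secondary obstacle'' (passing from Betti numbers $\binom{d}{i}$ to an actual Koszul isomorphism) is thus sidestepped entirely in the paper by going through the module $M$ rather than the complex $P$; the ``chief obstacle'' you identify (extracting $\rank(P_i) = \binom{d}{i}$ from the total) is also never needed. Neither of your two named obstacles is resolved in your sketch, so even the characteristic-$2$ case is not established by your argument as written.
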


  The second author proved this extended conjecture for local Cohen-Macaulay, quasi-Roberts rings of odd characteristic \cite{walker2017total}.
  In this paper, we extend this result to the even characteristic case:

\begin{thm} \label{ThmRobertsRings} Suppose $R$ is a local ring that is Cohen-Macaulay of characteristic $2$ and that $R$ is  quasi-Roberts (for example, a complete intersection).
  If $M$ is an $R$-module of finite length, then $\beta(M) \geq 2^{\dm(R)}$ and equality holds if and only if $M$ is isomorphic to the quotient of $R$ by a system of parameters.
\end{thm}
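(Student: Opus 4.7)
The inequality $\beta(M) \geq 2^d$ (with $d = \dim R$) is immediate from Theorem \ref{thm:char2TotalRankC} applied to the minimal free resolution $F_\bullet$ of $M$, which is a tiny complex since $R$ is Cohen-Macaulay and $M$ has finite length (so $\mathrm{pd}_R M = d$ by Auslander-Buchsbaum). The main content of the theorem is therefore the equality case. Assuming $\beta(M) = 2^d$, the plan is to adapt the argument from \cite{walker2017total} for odd-characteristic quasi-Roberts rings, replacing its use of Adams operations with whatever characteristic-two machinery drives the proof of Theorem \ref{thm:char2TotalRankC}.

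The strategy decomposes into three stages. The hardest stage, and the one I would tackle first, is to show that each Betti number is binomial: $\beta_i(M) = \binom{d}{i}$. The bound in Theorem \ref{thm:char2TotalRankC} should arise from some decomposition of the K-theory class of $F_\bullet$ whose pieces sum to the total Betti number, and the quasi-Roberts hypothesis then plays its usual role of forcing the local Chern character of $F_\bullet$ to be concentrated in top degree, so that the K-theory class of $F_\bullet$ is determined by $\chi(F_\bullet) = \ell(M)$. Combined with the equality conditions extracted from the proof of Theorem \ref{thm:char2TotalRankC}, this should pin down $[F_\bullet]$ as the class of a Koszul complex on a system of parameters, thereby forcing the individual Betti numbers to be binomial.

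Once the Betti numbers are binomial, the remainder is routine. Since $\beta_0(M) = 1$, we have $F_0 \cong R$, so $M \cong R/I$ where $I = \mathrm{im}(d_1)$ is generated by the $d = \beta_1(M)$ entries $x_1, \ldots, x_d \in \mathfrak{m}$ of the differential $d_1 : F_1 \to F_0$. The finite length of $M$ forces $I$ to be $\mathfrak{m}$-primary, so $x_1, \ldots, x_d$ is a system of parameters, and because $R$ is Cohen-Macaulay it is in fact a regular sequence. Hence the Koszul complex $K(x_1, \ldots, x_d)$ is a minimal free resolution of $M$; by uniqueness of minimal free resolutions, $F_\bullet \cong K(x_1, \ldots, x_d)$, so $M \cong R/(x_1, \ldots, x_d)$ as desired. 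The chief obstacle is stage one: in the odd-characteristic argument, the orthogonality of Adams eigenspaces produced the refinement from total Betti number to individual Betti numbers essentially for free, but no such orthogonality is available in characteristic two, and a careful revisit of the specific characteristic-two tools from the proof of Theorem \ref{thm:char2TotalRankC} will be needed to recover an analogous refinement under the quasi-Roberts hypothesis.
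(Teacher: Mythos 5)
Your treatment of the inequality $\beta(M) \geq 2^d$ is fine (though you need to also note that $M$ may a priori have infinite projective dimension, in which case $\beta(M) = \infty$ and there is nothing to prove). However, the proof of the equality characterization --- which you correctly identify as the main content --- is never actually carried out. Stage one of your plan, showing $\beta_i(M) = \binom{d}{i}$, is described only as something you ``would tackle'' via a conjectured decomposition of K-theory classes and local Chern characters, and you yourself flag that you don't know how to make it work in characteristic two. That is a genuine gap, not a routine detail.

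It is also not the route the paper takes, and I think it's worth seeing how much simpler the actual argument is. The paper first proves (for quasi-Roberts Cohen--Macaulay $R$ of characteristic $2$) the \emph{sharper} inequality
$$
\beta(M) \;\geq\; 2^d + \frac{2\len_R(\L^2_R M)}{\len_R M},
$$
so that $\beta(M) = 2^d$ forces $\L^2_R(M) = 0$, which over a local ring means $M$ is cyclic, say $M \cong R/I$. The inequality arises from the chain
$$
\len_R(M)\,\beta(M) \;\geq\; h(P \otimes_R P) \;\geq\; \len_R(F(M)) + 2\len_R(\L^2_R M) \;=\; 2^d \len_R(M) + 2\len_R(\L^2_R M),
$$
where $P$ is a minimal free resolution of $M$ and the middle inequality comes from the characteristic-two long exact sequences relating $\wt{T^2}$, $\wt{S^2}$, $\wt{\L^2}$ and $F$. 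When $\beta(M) = 2^d$ the leftmost inequality must also be an equality, which forces the differential of $P \otimes_R R/I$ to vanish; hence $\Tor_1^R(R/I,R/I) \cong I/I^2$ is a free $R/I$-module. Combined with $\pd_R I < \infty$, the Ferrand--Vasconcelos theorem then says $I$ is generated by a regular sequence. No statement about individual Betti numbers is ever needed, and no K-theory or Chern-character machinery is invoked. (Incidentally, your description of the odd-characteristic argument as deriving binomial Betti numbers from ``orthogonality of Adams eigenspaces'' misremembers that proof; \cite[2.4]{walker2017total} also runs through the $\L^2(M)=0$ plus Ferrand--Vasconcelos route.)

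Two smaller notes on stage two of your plan: it would work if stage one were done, but it only requires $\beta_0(M)=1$ and $\beta_1(M)=d$, not the full binomial pattern; and once you have $M \cong R/(x_1,\dots,x_d)$ with the $x_i$ a system of parameters, you're already done --- the final sentence about recovering the Koszul complex as $F_\bullet$ via uniqueness of minimal resolutions is true but unnecessary for the statement.
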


\subsection{Organization of this paper}  
In Section \ref{sec:background}, we give some necessary background. This includes a discussion of one our main tools: the \emph{Dold-Kan
  correspondence}, which gives a canonical method of extending endo-functors of $R$-modules to endo-functors of $R$-complexes. We also recall the notion of Dutta multiplicities and record some results
about the behavior of Dutta multiplicities on tiny complexes. 

In Section \ref{sec:TotalRankRobertsChar2} we prove Theorem \ref{ThmRobertsRings} and
in Section \ref{sec:genTRC}, we prove Theorem \ref{thm:char2TotalRankC}.

In Section \ref{sec:TRCOddChar} we prove that the Total Rank Conjecture for Short Complexes holds for algebras over a field (Theorem  \ref{thm:shortComplexAnalog}). 
The proof for fields of odd characteristic follows the proof of the original Total Rank Conjecture  given in \cite{walker2017total},  with minor modifications.  In the characteristic $0$ case, we employ a version of Hochster's metatheorem \cite{hochsterSurvey} proved by Kurano-Roberts \cite{KRAdamsOps}.

Finally, in Section \ref{sec:Carlsson} we apply our techniques to deduce some special cases of the conjecture of Carlsson mentioned above.

\subsection{Notation} \label{sec:notation}

\begin{notation} 
  For any commutative Noetherian ring $R$, we write $\Perf(R)$ for the category of all bounded complexes of finitely generated and projective $R$-modules.
For an $R$-module $M$, we write $\len(M) = \len_R(M)$ for its length (the length of any finite composition series of $M$ or $\infty$  if no such
series exists). 
  The subcategory of $\Perf(R)$ consisting of those complexes $P$ such that $\len_R H_i(P) < \infty$ for all $i$ 
is written as $\Perf^\fl(R)$. For $P \in \Perf^\fl(R)$, we set 
    $$
    h_i (P) := \len_R H_i (P), 
    \quad
    h (P) := \sum_i h_i (P), \quad 
    \text{ and } \quad \chi(P) := \sum_i (-1)^i h_i(P).
    $$
When $R$ is local and $P \in \Perf(R)$, we set $\beta_i(P) = \dim_k H_i(P \otimes_R k)$ and $\beta(P) = \sum_i \beta_i(P)$. 
If $P$ is minimal, then $\beta_i(P) = \rank_R(P_i)$ for all $i$. 
For a finitely generated $R$-module $M$ of finite projective dimension, we set $\beta (M) = \beta(P) = \sum_j \dim_k \Tor_j^R(M, k)$, where $P$ is any finite rank minimal free resolution of $M$.
\end{notation}

\subsection*{Acknowledgments} 

The authors thank Lucho Avramov for suggesting the argument used to prove Corollary \ref{MainCor} along with helpful comments and corrections on an earlier draft of this paper, and Linquan Ma for helpful discussions regarding Hochster's metatheorem. We thank the anonymous referees for catching errors and making suggestions that improved the quality of the exposition.

\section{Background}\label{sec:background}

\subsection{The Dold-Kan Correspondence}

In this section we recall details regarding one of the main tools of this paper, the \emph{Dold-Kan} correspondence. This correspondence gives a method for extending (not
necessarily additive) functors
defined at the level of $R$-modules to functors of complexes, in such a way that homotopy equivalences are preserved.
We will summarize and make explicit the necessary key properties of this correspondence, and we refer the reader to \cite{dold1961homologie,lurie2016higher} for further details.

\begin{defn}
    The \emph{simplicial category} $\Delta$ is the category whose objects are the finite, non-empty ordered sets $[n] := \{ 0 < 1 < \cdots < n \}$ for $n \geq 0$, with morphisms
    $$
    \Hom_\Delta ([n] , [m] ) := \{\text{nondecreasing set maps} \ [n] \to [m]\}.
    $$
    Given a category $\cC$, a \emph{simplicial object} in $\cC$ is a
    contravariant functor from $\Delta$ to $\cC$.
    %or, equivalently, a covariant functor of the form
    %$\Delta^\op \to \cC$.
    The class of all simplicial objects in $\cC$ forms a category $\sC$ with morphisms given by natural transformations.
    % The \emph{semisimplicial category} $\Delta_s \subset \Delta$ is the category whose objects are the same as those of $\Delta$, but with morphisms
    % $$\Hom_{\Delta_s} ([n] , [m]) := \text{nondecreasing injections} \ [n] \hookrightarrow [m].$$
    % A \emph{semisimplicial object} of a category $\cC$ is a functor
    % $$
    % \Delta_s^\op \to \cC.
    % $$
\end{defn}

% \begin{prop}
%     If $\cC$ is a category with finite coproducts, then the forgetful functor $R : \Delta \to \Delta_s$ admits a left adjoint $L : \Delta_s \to \Delta$. 
% \end{prop}

% \begin{rem}
%   The left adjoint $L : \Delta_s \to \Delta$ is often referred to as the left Kan extension. A useful analogy here is that the left Kan extension is to semisimplicial objects
%   what the sheafification functor is to presheaves: both of these functors are abstractly constructed as left adjoints to the natural forgetful functors on the respective
%   categories, and may be used to endow objects with additional structure in a canonical way.
% \end{rem}

Assume $\cC$ is an idempotent complete, exact category; i.e., a full subcategory of an abelian category that is closed
under extensions and summands. The example we have in mind occurs when
$\cC$ is the category of finitely generated and projective $R$-modules, for a commutative ring $R$. 
Then $\sC$ is also an exact category, with the notion of exactness given component-wise. 
Let $\Chzero(\cC)$ denote the category of non-negative chain complexes of objects in $\cC$ with morphisms being chain maps; it is also an
exact category with exactness defined degreewise.

\begin{thm}[Dold-Kan Correspondence, {\cite{dold1958homology}, \cite[Theorem 1.2.3.7]{lurie2016higher}}]    \label{thm:DoldKan}
   For $\cC$ as above, there is an equivalence of categories 
   $$
   \xymatrix{
     \sC \ar@/^/[rr]^{N}_\cong && \Chzero(\cC). \ar@/^/[ll]^{\G}
   }
   $$
   %
%   \[\begin{tikzcd}
%
% 	\arrow["N", curve={height=-6pt}, from=1-1, to=1-3]
% 	\arrow["\Gamma", curve={height=-6pt}, from=1-3, to=1-1]
% \end{tikzcd}\]
Moreover,
\begin{itemize}
\item $N$ converts simplicial homotopies to chain homotopies,
\item $\G$ converts chain homotopies to simplical homotopies,
\item   $N$ and $\G$ are exact functors, and
\item $N$ and $\G$ are natural for additive functors $\cC \to \cC'$.
\end{itemize}
\end{thm}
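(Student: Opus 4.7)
The plan is to construct the functors $N$ and $\Gamma$ explicitly and then verify each of the listed properties in turn. Define $N \colon \sC \to \Chzero(\cC)$ by the normalized chains: set $N(X)_n := \bigcap_{i=0}^{n-1} \ker(d_i \colon X_n \to X_{n-1})$ with differential $(-1)^n d_n$. This lives in $\cC$ because $\cC$ is idempotent complete and these intersections of kernels split off as direct summands of $X_n$ (the complementary summand is generated by degeneracies). In the opposite direction, define $\Gamma \colon \Chzero(\cC) \to \sC$ by $\Gamma(C)_n := \bigoplus_{\sigma \colon [n] \twoheadrightarrow [k]} C_k$, with face and degeneracy maps determined by the Eilenberg-Zilber rule: given $\alpha \colon [m] \to [n]$ in $\Delta$, the induced map on the summand indexed by $\sigma$ is built by factoring $\sigma \circ \alpha$ as a surjection followed by an injection and applying either the identity, a differential, or zero to $C_k$ depending on the shape of that factorization.

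To establish the equivalence $N \circ \Gamma \simeq \id$ and $\Gamma \circ N \simeq \id$, I would invoke the Moore decomposition: each $X_n$ splits canonically as a direct sum $\bigoplus_\sigma N(X)_k$ indexed by surjections $\sigma \colon [n] \twoheadrightarrow [k]$, where the summand $N(X)_k$ is embedded via the corresponding iterated degeneracy map. This statement is proved by induction on $n$ using that $\cC$ is idempotent complete, so the kernel-image decomposition of each face map exists. Verifying that $N$ and $\Gamma$ are mutually inverse then reduces to checking compatibility of this splitting with face and degeneracy operators, a combinatorial exercise in the simplex category $\Delta$.

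For the homotopy statements, recall that a simplicial homotopy between $f, g \colon X \to Y$ is encoded by a simplicial map $H \colon X \times \Delta^1 \to Y$ restricting to $f$ and $g$ on the two vertices. Applying $N$ and composing with the Eilenberg-Zilber shuffle map $N(X) \otimes N(\Delta^1) \to N(X \times \Delta^1)$ produces a chain-level map whose restriction to $N(X) \otimes N(\Delta^1)_1$ supplies the desired chain homotopy between $N(f)$ and $N(g)$. The reverse direction, promoting a chain homotopy through $\Gamma$ into a simplicial homotopy, is obtained by an explicit formula using the direct sum description of $\Gamma$, exploiting naturality of the Alexander-Whitney map.

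Exactness of both functors is essentially formal. A short exact sequence $0 \to X' \to X \to X'' \to 0$ in $\sC$ is exact degreewise, and since each $N(-)_n$ is a split summand cut out by kernels of face maps that commute with simplicial morphisms, the sequence $0 \to N(X')_n \to N(X)_n \to N(X'')_n \to 0$ is exact in $\cC$. Exactness of $\Gamma$ is immediate since $\Gamma(C)_n$ is a finite direct sum of components of $C$. Naturality with respect to an additive functor $F \colon \cC \to \cC'$ follows because both $N$ and $\Gamma$ are built from kernels of split maps, direct sums, and the simplicial/differential structure, all of which commute with $F$. The principal technical obstacle is the homotopy correspondence, where the sign conventions and the combinatorics of $(p,q)$-shuffles in $\Delta^n \times \Delta^1$ demand careful bookkeeping; the remainder of the argument, though intricate, is largely formal once $N$ and $\Gamma$ are correctly set up.
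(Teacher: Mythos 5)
The paper does not prove this theorem; it cites Dold's original paper and Lurie's \emph{Higher Algebra}, Theorem 1.2.3.7, and uses the result as a black box. So there is no proof in the paper to compare against. Your sketch is a reasonable outline of the classical argument, essentially the one found in those references: normalized chains for $N$, the sum over surjections for $\Gamma$, the Moore (or Eilenberg-Zilber) decomposition of $X_n$ as $\bigoplus_{\sigma\colon [n]\twoheadrightarrow[k]} N(X)_k$ to verify the equivalence, and then the homotopy, exactness, and naturality statements. Your observation that idempotent completeness is what makes $N(X)_n$ land in $\cC$ is the right point, and you correctly note it enters via iteratively splitting off kernels of face maps (each $d_i$ admits a degeneracy section, so the relevant idempotents exist).

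Two places where you are thinner than the standard treatments and should not wave hands if this were to be written out. First, the claim that $N$ sends simplicial homotopies to chain homotopies has a direct elementary proof: a simplicial homotopy from $f$ to $g$ gives maps $h_0,\dots,h_n\colon X_n \to Y_{n+1}$, and $\sum_i (-1)^i h_i$, appropriately restricted, is the chain homotopy. Routing this through Eilenberg-Zilber shuffle maps is legitimate but introduces machinery (a monoidal structure on $\sC$ and a tensoring over simplicial sets) that a general exact category $\cC$ does not obviously carry; you would need to justify the existence of $X \times \Delta^1$ in $\sC$, which is not literally a product in $\sC$ but rather an object built from copies of $X_n$'s indexed by nondegenerate simplices of $\Delta^1$. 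Second, the $\Gamma$ direction of the homotopy statement is asserted via ``an explicit formula... exploiting naturality of the Alexander-Whitney map,'' which is not an argument. The cleanest route is to give the explicit combinatorial formula for the simplicial homotopy $\Gamma(s)$ induced by a chain homotopy $s$ (Dold does this), or to deduce it from the equivalence together with an identification of cylinder objects on both sides. As it stands, that bullet is a gap in your sketch, though the statement itself is of course true.
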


    The functor $N$ is the \emph{normalization} functor, which associates
    to a simplicial object a chain complex whose degree $n$ piece is the intersection of the kernels of the first $n$ face maps.
    For a complex
    \begin{equation} \label{E418}
      P = ( \cdots \xra{\del_2}  P_1 \xra{\del_1} P_0 \to 0),
    \end{equation}
    the degree $n$ component of the simplicial object $\G(P)$ is $\bigoplus_{[n] \onto [k]} P_k$ where the direct sum is indexed by surjections in $\Delta$ with
    source $[n]$. For instance $\G(P)_0 = P_0$, $\G(P)_1 = P_0 \oplus P_1$, the degeneracy $\G(P)_0 \to \G(P)_1$ is the evident inclusion and the two face maps $\G(P)_1 \to
    \G(P)_0$ are $d_0(p_0, p_1) = p_0$ and $d_1(p_0, p_1) = p_0 + \del_1(p_1)$.
    
    % The functor $\Gamma$ is a little more subtle, but can
    % be described explicitly as follows: any nonnegatively-graded chain complex $P$ is naturally a semisimplicial object by sending the natural inclusion $[n-1] \subset [n]$ to the
    % map $P_n \xrightarrow{d_n} P_{n-1}$, and all other inclusions to $0$. The simplicial object $\Gamma (P)$ is thus defined to be the left Kan extension of $P$ when viewed as a
    % semisimplicial object.  

\begin{defn}
  Let $\cC$ be an idempotent complete additive category.
  Given a covariant (not necessarily additive) endofunctor $G : \cC \to \cC$ define $G_*: \sC \to \sC$ be the functor given by composition: $G_*$ sends 
 a simplicial object  $M: \Delta^\op \to \cC$ to the simplicial object $G \circ M: \Delta^\op \to \cC$. 
The {\em Dold-Kan extension} of $G$ is the functor  $\wt{G}: \Chzero(\cC) \to \Chzero (\cC)$ 
  given by
  $$
  \wt{G} := N \circ G_* \circ \Gamma.
  $$
\end{defn}

For example, given a complex $P$ as in \eqref{E418} and functor $G$, the complex $\tilde{G}(P)$
has $\tilde{G}(P)_0 = G(P_0)$ and $\tilde{G}(P)_1 = \ker(G(P_0 \oplus P_1) \xra{G(d_0)} G(P_0))$ 
and the differential $\tilde{G}(P)_1 \to \tilde{G}(P)_0$ is the restriction of $G(d_1)$.

\begin{cor} \label{DKhomotopy}
The Dold-Kan extension of any functor preserves chain homotopies.
\end{cor}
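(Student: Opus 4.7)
The plan is to chase a chain homotopy through the three functors that make up $\widetilde{G} = N \circ G_* \circ \Gamma$, using what Theorem \ref{thm:DoldKan} already tells us about $N$ and $\Gamma$ together with one elementary observation about $G_*$.

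First I would start with chain maps $f, g : P \to Q$ in $\Chzero(\cC)$ and a chain homotopy $h$ from $f$ to $g$. By the second bullet of Theorem \ref{thm:DoldKan}, $\Gamma$ converts this to a simplicial homotopy $\Gamma(h)$ between the simplicial maps $\Gamma(f), \Gamma(g) : \Gamma(P) \to \Gamma(Q)$.

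The key step, and the one place something actually needs checking, is that the (not necessarily additive) functor $G_*$ carries simplicial homotopies to simplicial homotopies. Here I would invoke the combinatorial description: a simplicial homotopy between $\alpha, \beta : X \to Y$ consists of morphisms $h_i : X_n \to Y_{n+1}$ for $0 \le i \le n$ satisfying certain identities involving the face and degeneracy operators and the maps $\alpha_n, \beta_n$ (see, e.g., the standard presentation in Weibel or Goerss-Jardine). Since this data is purely diagrammatic and lives degreewise in $\cC$, applying $G$ componentwise produces morphisms $G(h_i) : G(X_n) \to G(Y_{n+1})$ satisfying exactly the same identities, because each identity is of the form \emph{two parallel composites of morphisms in $\cC$ are equal}, and $G$, being a functor, respects composition and equality. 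Crucially, no additivity of $G$ is used, because the simplicial-homotopy identities are equational rather than linear. Therefore $G_*(\Gamma(h))$ is a simplicial homotopy between $G_*(\Gamma(f))$ and $G_*(\Gamma(g))$.

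Finally, by the first bullet of Theorem \ref{thm:DoldKan}, $N$ sends this simplicial homotopy to a chain homotopy between $N G_* \Gamma(f) = \widetilde{G}(f)$ and $N G_* \Gamma(g) = \widetilde{G}(g)$, completing the proof. The main (and really only) obstacle is the middle step: one must resist the temptation to define a simplicial homotopy as a map out of $X \times \Delta^1$, because forming such a product in $\cC$ would amount to taking coproducts and would then force us to worry about whether $G$ commutes with them. The combinatorial formulation sidesteps this entirely and is the reason the Dold-Kan extension behaves well for arbitrary, non-additive functors.
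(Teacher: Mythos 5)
Your proof is correct and matches the argument the paper is tacitly relying on: the Corollary has no written proof precisely because it is the composite $\Gamma$ (chain $\to$ simplicial homotopy), then $G_*$ (degreewise application of $G$), then $N$ (simplicial $\to$ chain homotopy), with the only point needing thought being that $G_*$ preserves simplicial homotopies. You correctly identify that this holds because the combinatorial description of a simplicial homotopy --- morphisms $h_i\colon X_n \to Y_{n+1}$ subject to purely equational compatibilities with the faces, degeneracies, and the two maps --- is respected by any functor, with no additivity needed, and you rightly flag that the alternative ``map out of $X \times \Delta^1$'' formulation would not survive a non-additive $G$; this is exactly the mechanism (going back to Dold and Dold--Puppe) that makes the Dold--Kan extension useful for $S^2$, $\Lambda^2$, and $T^2$ in this paper.
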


\begin{defn}
  An endofunctor $G: \cC \to \cC$ is called {\em polynomial} if $G(0) = 0$ and, for some $d \geq 0$, the $s$-th cross effects functor $T_s(G): \cC^{\oplus s} \to \cC$ 
  is zero for all $s > d$ (see \cite[\S 9]{EM54}). 
 The minimum value of $d$ for which this holds is the {\em degree} of $G$.
 \end{defn}

 A functor is additive if and only if it is polynomial of degree
  at most $1$ \cite[9.11]{EM54}. (A functor has degree $0$ if and only if it is the zero functor.)
  An example of a non-additive polynomial functor is the second exterior power functor $\L^2_R$ defined on the category of
  finitely generated, projective $R$-modules (see below). 
  We have a  natural isomorphism
\begin{equation} \label{E418c}
  \L^2_R(P \oplus P') \cong \L^2_R(P) \oplus P \otimes_R P' \oplus \L_R^2(P'),
\end{equation}
which shows that the second cross effects functor $T_2$  associated to $\L^2_R$ is given by $T_2(P, P') = P \otimes_R P'$. Since $T_2$ is additive in each
argument, we have $T_s = 0$ for all $s > 2$ \cite[9.9]{EM54}.
The same holds for the second symmetric power functor $S^2_R$ and the second tensor power functor $T^2_R$; see below.

The following is given by \cite[4.7 and Hilfssatz 4.23]{dold1961homologie}.

\begin{prop} \label{PolyFunctorProp}
  If $G$  is a polynomial functor of degree $d$ and $P \in \Chzero(\cC)$ is concentrated in degrees $[0, n]$, i.e., 
  $P = (\cdots \to 0 \to P_n \xra{d_n}  \cdots \xra{d_2}   P_1  \xra{d_1}   P_0 \to 0)$, then the complex
  $\wt{G}(P)$ is concentrated in degrees $[0, d n]$.  
  If $G$ is additive, then there is a natural isomorphism
  $$
  \wt{G}(P) \cong G(P) := (\cdots \to 0 \to G(P_n)  \xra{G(d_n)} \cdots  \xra{G(d_2)} G(P_1)  \xra{G(d_1)} G(P_0) \to 0).
  $$
\end{prop}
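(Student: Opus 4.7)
The plan splits naturally along the two claims in the proposition.

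The second (additive) claim is essentially formal. When $G$ is additive it preserves finite direct sums, and since $\Gamma(P)_m = \bigoplus_{[m] \twoheadrightarrow [k]} P_k$ is a finite direct sum in each simplicial degree, with face and degeneracy operators given by universal $R$-linear combinations of inclusions and projections, $\Gamma$ intertwines with termwise application of $G$: there is a canonical isomorphism $G_* \Gamma(P) \cong \Gamma(G(P))$ of simplicial objects, where $G(P)$ denotes the chain complex obtained by applying $G$ in each homological degree. Applying $N$ and invoking $N \circ \Gamma \simeq \id$ from Theorem \ref{thm:DoldKan} yields the desired natural isomorphism $\wt{G}(P) = N G_* \Gamma(P) \cong N \Gamma G(P) \cong G(P)$.

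For the degree bound, the strategy is to reduce to the additive case using the cross-effects decomposition of Eilenberg-MacLane. For any functor $G$ with $G(0) = 0$ and every finite direct sum $X = X_1 \oplus \cdots \oplus X_s$, there is a natural decomposition
$$G(X) \cong \bigoplus_{\emptyset \neq I \subseteq \{1,\ldots,s\}} T_{|I|}(G)(X_i : i \in I),$$
and because $G$ has degree $d$ only terms with $|I| \le d$ contribute. Applying this decomposition to $\Gamma(P)_m$, which is a direct sum of copies of the $P_k$ indexed by surjections $[m] \twoheadrightarrow [k]$ with $k \le n$, the simplicial object $G_*\Gamma(P)$ decomposes in each simplicial degree as a sum of terms of the form $T_s(G)(P_{k_1}, \ldots, P_{k_s})$ with $s \le d$ and each $k_i \le n$. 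Since each $T_s(G)$ is multiadditive, a Kunneth-type comparison identifies the corresponding summand of $\wt{G}(P)$ with a shifted tensor product whose chain-level support lies in degrees at most $k_1 + \cdots + k_s \le sn \le dn$.

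The main obstacle is this last identification: one must verify that the cross-effects decomposition survives passage through $\Gamma$ and $N$, and that each multiadditive piece $T_s(G)(P_{k_1}, \ldots, P_{k_s})$ genuinely computes as an iterated tensor product of chain complexes (up to the shuffle equivalence). This requires the Alexander-Whitney/Eilenberg-Zilber comparison between $N$ applied to a tensor product of simplicial objects and the tensor product of the normalizations, and it is the technical heart of the argument in \cite[4.7, Hilfssatz 4.23]{dold1961homologie}. Once in place, the additive case furnishes the base of an induction on the polynomial degree $d$ that closes out the bound.
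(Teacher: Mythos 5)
Note first that the paper itself does not prove this proposition: it is stated with a bare citation to \cite[4.7 and Hilfssatz 4.23]{dold1961homologie}, so there is no in-paper argument to compare against. With that caveat, your sketch is broadly in the spirit of the Dold--Puppe proof, and the additive clause you handle correctly and completely: since $G$ preserves finite direct sums and sums of maps, and since both the terms $\Gamma(P)_m = \bigoplus_{[m]\twoheadrightarrow[k]} P_k$ and the structure maps of $\Gamma(P)$ are built from inclusions, projections, identities, and the differentials of $P$ via additive combinations, one indeed gets $G_*\Gamma(P)\cong\Gamma(G(P))$ as simplicial objects, and applying $N$ and $N\Gamma\simeq\id$ finishes.

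The degree bound is where your proposed mechanism departs from how the argument actually runs, and on two counts. First, the cross-effect decomposition $G(\bigoplus_i X_i)\cong\bigoplus_{\emptyset\ne I,\ |I|\le d}T_{|I|}(G)(X_i: i\in I)$ holds degreewise but is \emph{not} compatible with the simplicial structure maps of $G_*\Gamma(P)$: the face maps permute and merge the surjections $[m]\twoheadrightarrow[k]$ that index the summands, so $G_*\Gamma(P)$ does not split as a direct sum of simplicial objects to which one could apply a multi-additive analysis term by term. Second, and more seriously, the Alexander--Whitney/Eilenberg--Zilber comparison only supplies a chain homotopy equivalence (a retraction onto $NA\otimes NB$ with contractible complement); a homotopy equivalence carries no information about the actual support of $N(A\otimes B)$, whereas the proposition asserts the strict vanishing $\wt{G}(P)_j=0$ for $j>dn$. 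What Dold--Puppe's Hilfssatz 4.23 does instead is a direct combinatorial argument on the degenerate part: combining the surjection decomposition of $\Gamma(P)_m$ with the cross-effects, each summand involves at most $d$ surjections onto targets of size at most $n$, hence has at most $dn$ "jump indices"; once $m>dn$ there must exist an $i$ at which every surjection in play is constant on $\{i,i+1\}$, which exhibits the summand as degenerate, so $D(G_*\Gamma P)_m=(G_*\Gamma P)_m$ and $N(G_*\Gamma P)_m=0$. You do flag this step as the technical heart and defer to Dold--Puppe, which is a legitimate move --- but the shuffle/K\"unneth machinery you name is not the tool that produces the strict bound, and a reader trying to fill in the sketch along your proposed lines would stall at exactly that point.
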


We now focus on our primary case of interest:
Let $R$ be a commutative ring and $\cP(R)$ the category of finitely generated and projective $R$-modules. 

\begin{defn} An endo-functor $G: \cP(R) \to \cP(R)$ {\em commutes with localization} if for each prime ideal $\fp$ of $R$ there exists an endofunctor
  $G_{\fp}: \cP(R_\fp) \to \cP(R_\fp)$ and a natural isomorphism that joins the functors $P \mapsto G(P)_\fp$ and $P \mapsto G_{\fp}(P_\fp)$.
\end{defn}

Observe that if $G$ commutes with localization then the naturality of the Dold-Kan correspondence gives a natural isomorphism
\begin{equation} \label{E226}
  \wt{G_\fp}(P_\fp) \cong \wt{G}(P)_\fp
\end{equation}
in $\Chzero(\cP(R_\fp))$ for each $P \in \Chzero(\cP(R))$.

Recall that the {\em support} of a complex $P$ of $R$-modules is
$$
\supp_R(P) = \{ \fp \in \Spec(R) \mid \text{ $P_\fp$ is not exact}\}.
$$
If $P$ is a bounded below complex of projective $R$-modules, then $\fp \in \supp_R(P)$ if and only if $P_{\fp}$ is not contractible
as a complex of $R_\fp$-modules.

\begin{prop} \label{PropCWL} If $G: \cP(R) \to \cP(R)$ commutes with localization, then $\supp_R(\wt{G}(P)) \subseteq \supp_R(P)$
  for all $P \in \Chzero(\cP(R))$. 
\end{prop}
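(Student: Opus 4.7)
The plan is to combine the two key properties of the Dold-Kan extension that the section has already established: its compatibility with localization, stated as equation (\ref{E226}), and its preservation of chain homotopies, stated as Corollary \ref{DKhomotopy}.

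First I would fix a prime $\fp \notin \supp_R(P)$, so that $P_\fp$ is acyclic. Since $P$ is a bounded-below complex of finitely generated projective $R$-modules, $P_\fp$ is a bounded-below complex of projective $R_\fp$-modules, and any acyclic such complex is contractible; so there is a chain homotopy equivalence $P_\fp \simeq 0$ in $\Chzero(\cP(R_\fp))$. Applying the natural isomorphism (\ref{E226}) yields $\wt{G}(P)_\fp \cong \wt{G_\fp}(P_\fp)$, and by Corollary \ref{DKhomotopy} the extended functor $\wt{G_\fp}$ sends this homotopy equivalence to a homotopy equivalence $\wt{G_\fp}(P_\fp) \simeq \wt{G_\fp}(0)$. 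It then remains to identify $\wt{G_\fp}(0)$ directly from the definition $\wt{G_\fp} = N \circ (G_\fp)_* \circ \Gamma$: here $\Gamma(0)$ is the zero simplicial object, so $(G_\fp)_* \Gamma(0)$ is the constant simplicial object whose value in every degree is $G_\fp(0) = G(0)_\fp$ with all faces and degeneracies equal to the identity, and the normalization of such a constant simplicial object vanishes in positive degrees (every face map has zero kernel) and equals $G_\fp(0)$ in degree zero.

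For the polynomial functors actually used to apply this proposition elsewhere in the paper (for instance $\L_R^2$, $S_R^2$, $T_R^2$, and the Frobenius extension of scalars), one has $G(0) = 0$, so $\wt{G_\fp}(0) = 0$ and $\wt{G}(P)_\fp$ is contractible; hence $\fp \notin \supp_R(\wt{G}(P))$. The only mildly subtle point is this last identification, which requires the reducedness condition $G(0) = 0$; everything else is a purely formal consequence of (\ref{E226}) and Corollary \ref{DKhomotopy}, so I expect the written proof either to invoke reducedness explicitly or to treat it as implicit in the setting of polynomial functors that the paper works with.
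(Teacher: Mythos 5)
Your proof is correct and follows essentially the same route as the paper's: localize at $\fp \notin \supp_R(P)$, observe that $P_\fp$ is contractible, and combine Corollary \ref{DKhomotopy} with the natural isomorphism \eqref{E226}. You are right to flag the reducedness hypothesis: the paper's one-line argument implicitly uses $\wt{G_\fp}(0) = 0$ (which requires $G(0) = 0$), and while the proposition's stated hypotheses do not literally force this, it holds for every functor the paper actually applies the proposition to, since those are polynomial and hence reduced by definition.
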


\begin{proof} 
If $\fp \notin \supp_R(P)$ then $P_\fp$ is contractible (that is, the identity map is homotopic to the zero map) and hence $\wt{G_\fp}(P_\fp)$ is also contractible by Corollary \ref{DKhomotopy}.
The result thus follows from \eqref{E226}.
\end{proof}

We write $\Perf_{\geq 0}(R)$ and $\Perf^\fl_{\geq 0}(R)$ for the full subcategories of $\Perf(R)$ and $\Perf^\fl(R)$ (see \S \ref{sec:notation})
consisting of complexes with $P_j = 0$ for $j < 0$.
By Proposition \ref{PolyFunctorProp}, given any polynomial endofunctor $G$ of $\cP(R)$, 
its Dold-Kan extension determines an endofunctor $\wt{G}$ of $\Perf_{\geq 0}(R)$ that preserves chain homotopies. Since the mapping cone of a quasi-isomorphism $\phi : C \to D$ of objects in $\Perf_{\geq 0} (R)$ is an exact bounded complex of projective $R$-modules, the map $\phi$ must be a homotopy equivalence; this implies that the Dold-Kan extension of a functor also preserves quasi-isomorphisms on the category $\Perf_{\geq 0} (R)$. If $G$ commutes with localization,
then $\wt{G}$ restricts to an endofunctor of $\Perf^\fl_{\geq 0}(R)$.

\begin{rem}
  The work of Tchernev-Weyman \cite{tchernev2004free} provides an alternative method for extending polynomial functors to arbitrary complexes of free $R$-modules.
  Their construction is distinct from the Dold-Kan extension in general (see \cite[Example 14.7]{tchernev2004free}).
\end{rem}

We will apply the constructions and results above in the following cases:

\begin{itemize}
\item $T^2_R = T^2: \cP(R) \to \cP(R)$, the second tensor power functor, given by $T^2(P) = P \otimes_R P$.
\item $S^2_R = S^2: \cP(R) \to \cP(R)$, the second symmetric power functor, given by
  $$
  S^2 P = \frac{P \otimes_R P}{R \cdot \{p \otimes p' - p' \otimes p \mid p, p' \in P\}}.
  $$
\item $\L^2_R = \L^2: \cP(R) \to \cP(R)$, the second exterior  power functor, given by
  $$
  \L^2(P) = \frac{P \otimes_R P}{R \cdot \{p \otimes p \mid p \in P\}}.
  $$
    \item $\chr(R) = p > 0$ and $F: \cP(R) \to \cP(R)$ is extension of scalars along the Frobenius homomorphism $\phi: R \to R$ given by  $\phi(r) = r^p$.
    \end{itemize}

    Each is a polynomial functor of degree at most $2$ that commutes with localization, and thus each induces a quasi-isomorphism preserving endo-functor of $\Perf^\fl_{\geq 0}(R)$, written as
    $\wt{T^2}$, $\wt{S^2}$, $\wt{\L^2}$, and $\wt{F}$, respectively.  

    Since $F$ is additive, $\wt{F}$ is isomorphic to the usual functor on chain complexes given by extension of scalars along the Frobenius (see Proposition \ref{PolyFunctorProp}).
    Likewise, the functor $\wt{T^2}$ can be related to the ``usual'' notion of tensor product of complexes (see, e.g.,  \cite[Section 2.3]{schwede2003equivalences}):
    
    \begin{prop} \label{prop228}
      For any $P \in \Perf_{\geq 0}(R)$, there is a homotopy equivalence joining $\wt{T^2}(P)$ and $P \otimes_R P$ (the totalization of the
      bicomplex which in bidegree $(s,t)$ is $P_s \otimes_R P_t$).
    \end{prop}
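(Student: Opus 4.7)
The proof plan is to unpack the definition of $\wt{T^2}$ and then invoke the Eilenberg–Zilber theorem. By definition,
\[
\wt{T^2}(P) \;=\; N \circ T^2_* \circ \Gamma(P) \;=\; N\bigl(\Gamma(P) \otimes_R \Gamma(P)\bigr),
\]
where $\Gamma(P) \otimes_R \Gamma(P)$ denotes the simplicial $R$-module whose $n$-th level is $\Gamma(P)_n \otimes_R \Gamma(P)_n$, with face and degeneracy maps acting diagonally; this is exactly the functor $T^2_*$ applied pointwise to the simplicial object $\Gamma(P)$.

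Next I would cite the Eilenberg–Zilber theorem: for any two simplicial $R$-modules $A$ and $B$, the Alexander–Whitney and shuffle maps furnish a natural chain homotopy equivalence
\[
N(A \otimes_R B) \;\simeq\; N(A) \otimes_R N(B)
\]
(see, e.g., Weibel's \emph{An Introduction to Homological Algebra}, \S 8.5). Specializing to $A = B = \Gamma(P)$ yields a natural homotopy equivalence
\[
\wt{T^2}(P) \;=\; N\bigl(\Gamma(P) \otimes_R \Gamma(P)\bigr) \;\simeq\; N\Gamma(P) \otimes_R N\Gamma(P).
\]
Since $N \circ \Gamma \cong \mathrm{id}$ on $\Chzero(\cP(R))$ by Theorem \ref{thm:DoldKan}, the right-hand side is canonically isomorphic to $P \otimes_R P$, the totalization of the bicomplex with $(s,t)$-entry $P_s \otimes_R P_t$.

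The main (minor) obstacle is ensuring that the tensor product that appears inside $N(-)$ really is the levelwise (diagonal) simplicial tensor product that Eilenberg–Zilber addresses, rather than some external bisimplicial construction; this just requires observing that $T^2_* \circ \Gamma$ applies $-\otimes_R -$ levelwise with diagonally-acting faces and degeneracies, which is by definition the internal tensor product of simplicial $R$-modules. Once that identification is made, Eilenberg–Zilber does all the work and the naturality of both $N$ and $\Gamma$ makes the equivalence natural in $P$.
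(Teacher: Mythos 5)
Your proof is correct, and it is the standard argument one would give here. It is worth noting that the paper states Proposition \ref{prop228} without proof, evidently treating it as a well-known consequence of the Eilenberg--Zilber theorem, so there is no written proof in the paper to compare against. Your argument supplies exactly the expected chain of reasoning: unwinding $\wt{T^2} = N \circ T^2_* \circ \Gamma$ to see that $T^2_* \Gamma(P)$ is the levelwise (diagonal) tensor product of simplicial modules $\Gamma(P) \otimes_R \Gamma(P)$, applying the Eilenberg--Zilber/Alexander--Whitney equivalence $N(A \otimes_R B) \simeq N(A) \otimes_R N(B)$, and then using $N\Gamma \cong \mathrm{id}$. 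The one point you flag --- that the relevant tensor inside $N(-)$ is the degreewise internal tensor product of simplicial modules, not an external bisimplicial construction --- is precisely the right thing to be careful about, and your resolution of it is correct. The equivalence is natural in $P$ because shuffle and Alexander--Whitney are natural, which matches how the paper later uses this result implicitly via Dold--Kan naturality.
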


    \begin{rem} Analogous statements hold for $\wt{S^2}$ and $\wt{\L^2}$ for rings $R$ in which $2$ is invertible.
    \end{rem}

    We will also need the following fact:

    \begin{lem} \label{lem419} For $P \in \Chzero(\cP(R))$ there is an isomorphism $H_0(\widetilde{\L^2}(P)) \cong \L^2(H_0(P))$.
    \end{lem}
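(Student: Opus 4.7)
The plan is to compute $H_0(\widetilde{\L^2}(P))$ explicitly from the formula $\widetilde{\L^2}(P) = N \circ (\L^2)_* \circ \Gamma(P)$ by unwinding the definitions in degrees $0$ and $1$. Since $H_0$ of a normalized complex depends only on the simplicial object in low degrees, we only need the very first face maps of $\Gamma(P)$, which are described explicitly just after Theorem \ref{thm:DoldKan}.

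First I would identify the low-degree data. At simplicial level $0$ we have $\L^2 \Gamma(P)_0 = \L^2(P_0)$. At level $1$, using the natural isomorphism \eqref{E418c} applied to $\Gamma(P)_1 = P_0 \oplus P_1$, we get
$$
\L^2\Gamma(P)_1 \cong \L^2(P_0) \oplus (P_0 \otimes_R P_1) \oplus \L^2(P_1).
$$
Under this decomposition, the face map $\L^2(d_0)$ is the projection onto $\L^2(P_0)$, so
$$
N(\L^2_*\Gamma(P))_1 = \ker \L^2(d_0) \cong (P_0 \otimes_R P_1) \oplus \L^2(P_1),
$$
and the differential to $N(\L^2_*\Gamma(P))_0 = \L^2(P_0)$ is the restriction of $\L^2(d_1)$, where $d_1(p_0,p_1) = p_0 + \del_1(p_1)$. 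Tracking this through the natural splitting \eqref{E418c} yields
$$
(p_0 \otimes p_1,\; q_1 \wedge q_1') \longmapsto p_0 \wedge \del_1(p_1) + \del_1(q_1) \wedge \del_1(q_1').
$$

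The last step is to compare the cokernel of this map with $\L^2(H_0(P))$. For this I would record the general algebraic fact that for a surjection of $R$-modules $M \onto M/J$ we have
$$
\L^2_R(M/J) \;\cong\; \L^2_R(M) / \langle m \wedge j : m \in M,\, j \in J \rangle,
$$
which is straightforward to verify by constructing the inverse map from the universal property of the alternating square. Applying this with $M = P_0$ and $J = \image(\del_1)$, the right-hand side is precisely $\L^2(P_0)$ modulo the submodule generated by elements $p \wedge \del_1(p_1)$. This matches the image of the first summand of the differential above, and the contribution $\del_1(q_1) \wedge \del_1(q_1')$ of the second summand is already contained in this submodule. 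The resulting cokernel is therefore canonically $\L^2(H_0(P))$, as desired.

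The only subtle point, and where I expect any small technicalities to lie, is ensuring that the natural splitting in \eqref{E418c} is compatible with the simplicial face maps on $\Gamma(P)$ as written, so that the identification of the differential with $p_0 \otimes p_1 \mapsto p_0 \wedge \del_1(p_1)$ is genuinely natural. Once this naturality is confirmed, the isomorphism $H_0(\widetilde{\L^2}(P)) \cong \L^2(H_0(P))$ is natural in $P$ as well.
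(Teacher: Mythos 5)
Your proposal is correct and follows essentially the same approach as the paper's proof: both compute $\widetilde{\L^2}(P)$ in degrees $0$ and $1$ via the description of $\Gamma(P)$ and the splitting \eqref{E418c}, both arrive at the same differential $(p_0 \otimes p_1,\; q_1 \wedge q_1') \mapsto p_0 \wedge \del_1(p_1) + \del_1(q_1) \wedge \del_1(q_1')$, and both observe that the $\L^2(P_1)$ summand contributes nothing new to the cokernel so that $H_0(\widetilde{\L^2}(P))$ is the cokernel of $p_0 \otimes p_1 \mapsto p_0 \wedge \del_1(p_1)$, identified with $\L^2(H_0(P))$ by right-exactness of $\L^2$ on surjections. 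The ``subtle point'' you flag is not an issue: the isomorphism \eqref{E418c} is natural in pairs $(P, P')$, and the face maps $d_0, d_1$ are the specific $R$-linear maps written out after Theorem \ref{thm:DoldKan}, so the tracking of $\L^2(d_0)$ and $\L^2(d_1)$ through the decomposition is exactly the direct computation you (and the paper) perform.
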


    \begin{proof} 
Based on the description of the beginning of the complex $\widetilde{\L^2}(P)$ given above and using \eqref{E418c}, there is a right exact sequence
      $$
      P_0 \otimes_R P_1 \oplus \L^2(P_1) \to \L^2(P_0) \to H_0(\widetilde{\L^2}(P)) \to 0
      $$
      in which the left-hand map sends $(p_0 \otimes p_1, p_1' \smsh p_1'')$ to $p_0 \smsh \del_1(p_1) + \del_1(p_1') \smsh \del_1(p_1'')$.
      It follows that there is an isomorphism
      $$
      H_0(\widetilde{\L^2}(P)) \cong
      \coker \left(P_0 \otimes P_1 \xra{p_0 \otimes p_1 \mapsto p_0 \smsh \del_1(p_1)} \L^2(P_0)\right).
      $$
      This cokernel also maps isomorphically to $\L^2 H_0(P)$ via the map induced by the canonical map  $P_0 \onto H_0(P)$.
    \end{proof}

The following lemma will be a key in our proof of the Total Rank Conjecture in characteristic two:
    
\begin{lem}\label{lem:fundamentalSES}
    Let $R$ be a noetherian ring and $P \in \Perf_{\geq 0} (R)$. There is a short exact sequence of complexes in $\Perf_{\geq 0} (R)$ of the form
    $$
    0 \to \wt{\L^2} P \to \wt{T^2} P \to \wt{S^2} P \to 0
    $$
    and this induces a long exact sequence 
      \begin{equation} \label{E419a}
\begin{aligned}
 \cdots  & \to H_{j+1}(\wt{S^2} P) \to H_j(\wt{\L^2} P) \to H_j(P \otimes_RP) \to H_j(\wt{S^2} P) \to \\
  &  \cdots \to
H_{1}(\wt{S^2} P) \to  H_0(\wt{\L^2} P) \to  H_0(P \otimes_R P) \to H_0(\wt{S^2} P) \to 0. \\
  \end{aligned}
   \end{equation}
    When $\chr(R) = 2$, there is a short exact sequence of complexes of the form
    $$
    0 \to F(P) \to \wt{S^2} P \to \wt{\L^2} P \to 0
    $$
    and hence a long exact sequence
      \begin{equation} \label{E419b}
\begin{aligned}
  \cdots & \to  H_j(F(P)) \to H_j(\wt{S^2} P) \to H_j(\wt{\L^2} P) \to H_{j-1}(F(P)) \to \\
  & \cdots \to H_1(\wt{S^2} P) \to H_1(\wt{\L^2} P) \to H_0(F(P)) \to H_0(\wt{S^2} P) \to H_0(\wt{\L^2} P) \to 0. \\
  \end{aligned}
   \end{equation}

    \end{lem}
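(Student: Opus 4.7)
The plan is to reduce each claim to a natural short exact sequence of (not necessarily additive) endofunctors on $\cP(R)$ and then transport it through the Dold-Kan correspondence using exactness of $N$.

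First, I would establish the module-level short exact sequences. The antisymmetrization $\L^2(P) \to T^2(P)$, $x \wedge y \mapsto x \otimes y - y \otimes x$, is a well-defined natural transformation whose cokernel is canonically $S^2(P)$, yielding the characteristic-free sequence $0 \to \L^2 \to T^2 \to S^2 \to 0$. For the characteristic-$2$ sequence, define $\alpha \colon F(P) \to S^2(P)$ by $s \otimes m \mapsto s\, m^2$; this is well-defined since $(rm)^2 = r^2 m^2$ matches the Frobenius twist of scalars, and $(m+m')^2 = m^2 + m'^2$ in $S^2(P)$ when $2 = 0$. The quotient map $T^2 \onto \L^2$ annihilates $x \otimes y + y \otimes x$ in characteristic $2$ (since $x \wedge y + y \wedge x = 2(x \wedge y) = 0$), so it factors through a natural map $\beta \colon S^2(P) \to \L^2(P)$. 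Exactness of $0 \to F \xra{\alpha} S^2 \xra{\beta} \L^2 \to 0$ can be checked for a free $P$ with basis $\{e_i\}$: the kernel of $\beta$ is the free submodule on $\{e_i^2\}$, which is precisely the image of $\alpha$, while $\beta$ is surjective with image basis $\{e_i \wedge e_j\}_{i < j}$. The case of projective $P$ follows by passing to a direct summand of a free module, since $T^2$, $S^2$, $\L^2$, and $F$ are all compatible with direct sum decompositions.

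Next, I would apply these natural short exact sequences of functors degreewise to the simplicial object $\Gamma(P) \in s\cP(R)$: since each simplicial degree $\Gamma(P)_n$ is a finitely generated projective $R$-module, naturality yields degreewise short exact sequences of simplicial projective modules. Because the normalization functor $N$ is exact (Theorem \ref{thm:DoldKan}), applying $N$ produces short exact sequences of complexes in $\Perf_{\ge 0}(R)$ which, by the definition $\wt{G} = N \circ G_* \circ \Gamma$, are exactly the asserted sequences
$$
0 \to \wt{\L^2}(P) \to \wt{T^2}(P) \to \wt{S^2}(P) \to 0
$$
and, in characteristic $2$,
$$
0 \to \wt{F}(P) \to \wt{S^2}(P) \to \wt{\L^2}(P) \to 0.
$$
Since $F$ is additive, Proposition \ref{PolyFunctorProp} identifies $\wt{F}(P)$ with the termwise complex $F(P)$, matching the statement.

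Finally, the long exact sequences \eqref{E419a} and \eqref{E419b} are the standard long exact sequences in homology associated to these short exact sequences of chain complexes. The main obstacle is verifying the characteristic-$2$ short exact sequence at the module level: both the existence of $\beta$ and the additivity of $\alpha$ depend essentially on $2 = 0$, so this sequence is a genuinely characteristic-$2$ phenomenon without an analogue in other characteristics. Once the module-level sequences are in hand, the rest is formal from exactness of the Dold-Kan functors.
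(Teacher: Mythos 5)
Your argument follows the paper's proof in its essentials: establish the two natural short exact sequences of (polynomial) endofunctors at the level of finitely generated projective modules, push them through the Dold-Kan machinery by applying the functors degreewise to $\Gamma(P)$ and using exactness of $N$, and invoke Proposition \ref{PolyFunctorProp} to identify $\wt{F}(P)$ with the termwise Frobenius pullback $F(P)$. Your verification of exactness of $0 \to F \to S^2 \to \L^2 \to 0$ in characteristic $2$ (reducing to the free case and checking on a basis) is in fact more explicit than the paper's one-line description of the map as the adjoint of $x \mapsto x^2$.

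The one step you pass over is the identification of the middle term in the first long exact sequence. The short exact sequence you derive is
$0 \to \wt{\L^2}(P) \to \wt{T^2}(P) \to \wt{S^2}(P) \to 0$,
so its homology long exact sequence has terms $H_j(\wt{T^2}P)$, whereas \eqref{E419a} as stated has $H_j(P\otimes_R P)$. To get from one to the other you must also invoke the homotopy equivalence $\wt{T^2}(P) \simeq P \otimes_R P$ of Proposition \ref{prop228}, which the paper's proof cites explicitly (``the quasi-isomorphism $\wt{T^2}(P) \sim P \otimes_R P$''). This is a minor omission rather than a flaw in the approach, but it is a genuinely necessary ingredient: all subsequent applications of \eqref{E419a} (e.g.\ the inequality \eqref{E310a}) depend on having $P \otimes_R P$, not $\wt{T^2}(P)$, in the middle column.
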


\begin{rem}
  Taking classes in the Grothendieck group of $\Perf(R)$, the second short exact sequence of Lemma \ref{lem:fundamentalSES}
gives an explicit proof that extension of scalars along the Frobenius map realizes the second Adams operation; see \cite[Theorem B]{gillet1987intersection}. 
\end{rem}

\begin{proof}
  For any finitely generated projective $R$ module $P$, we have the natural short exact sequence
  $$
  0 \to \L^2(P) \xra{x \smsh y \mapsto x \otimes y - y \otimes x} T^2(P) \xra{\can} S^2(P) \to 0
  $$
  and when $\chr(R) = 2$ we have the natural short exact sequence
  $$
  0 \to F(P) \to S^2(P) \xra{x \cdot y \mapsto x \smsh y} \L^2(P) \to 0. 
  $$
Here, the map $F(P) \to S^2(P)$ is adjoint to the $R$-linear map $P \xra{x \mapsto x^2} \phi_* S^2(P)$, where $\phi_*$ denotes restriction of scalars along the Frobenius. 
The result follows from the naturality and exactness properties of Dold-Kan extensions, the isomorphism $\wt{F}(P) \cong F(P)$, and the quasi-isomorphism
$\wt{T^2}(P) \sim P \otimes_R P$. 
  \end{proof}

\subsection{Some generalities on Dutta Multiplicities and Short Complexes}

Throughout this section, we assume that $(R, \fm, k)$ is a complete local ring of characteristic $p > 0$ and dimension $d$ that has a perfect residue field $k = R/\fm$.
We recall properties of extension of scalars along the Frobenius map when applied
to free complexes with finite length homology. In particular, we recall the definition of ``Dutta multiplicity".

Given a complex of free  $R$-modules $P$, recall that we write
$F(P)$ for the complex of $R$-modules obtained from $P$ by extension of scalars along the Frobenius endomorphism $r \mapsto r^{p}$ of $R$. 
For an integer $e \geq 0$, we write $F^e(P)$ for the $e$-th iterate of this functor.
Concretely, if bases are chosen for each component of $P$, so that each map in the complex is represented by a matrix, then $F^e(P)$ is isomorphic to the complex
whose component free modules are the same as those of $P$, but with each entry of each matrix replaced by its $p^e$ power.

The following result follows from \cite[Theorem 7.3.3]{RobertsBook}:

\begin{prop} \label{proplimitsexist} For $R$ as above, given $P \in \Perf^\fl(R)$ and an integer $i$, the limit
  $\lim_{e \to \infty} \frac{h_i(F^eP)}{p^{de}}$
  exists.
\end{prop}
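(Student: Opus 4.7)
My plan is to reduce the statement for a general complex $P \in \Perf^\fl(R)$ to the single-module case and then invoke a theorem of Seibert: for any finite-length $R$-module $M$ and any $i \geq 0$, the normalized Tor lengths $\operatorname{length}_R(\operatorname{Tor}_i^R({}^e\!R, M))/p^{de}$ converge as $e \to \infty$, where ${}^e\!R$ denotes $R$ regarded as an $R$-module via $F^e$. This single-module theorem is the content of Roberts' 7.3.3 in the bounded-complex packaging, and it generalizes Monsky's existence theorem for Hilbert-Kunz multiplicities from the case $M = R/I$, $i = 0$ to all $i$.

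First I would make a preliminary reduction: since $P$ is a bounded complex of flat modules, there is a convergent hyperhomology (Cartan-Eilenberg) spectral sequence
$$E^2_{s,t} = \operatorname{Tor}_s^R({}^e\!R, H_t(P)) \;\Longrightarrow\; H_{s+t}(F^e P).$$
Each homology module $H_t(P)$ has finite length, so Seibert's theorem applies to every $E^2$-term individually, giving $\operatorname{length}(E^2_{s,t})/p^{de} \to \gamma_{s,t}$ for well-defined limits $\gamma_{s,t}$.

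The next step, which I expect to be the main obstacle, is to pass from convergence of the $E^2$-lengths to convergence of the abutment lengths $h_i(F^eP)$. Lengths are not strictly additive across a spectral sequence: one has only the inequality $h_i(F^eP) \le \sum_{s+t=i} \operatorname{length}(E^2_{s,t})$, with the discrepancy measured by images of differentials. To upgrade this to a limit statement, the plan is to proceed by induction on the number of nonzero $H_t(P)$, using the brutal truncation $0 \to \sigma_{\geq n}P \to P \to \sigma_{<n}P \to 0$ and the associated long exact sequence of homology after applying $F^e$. The error terms coming from connecting homomorphisms have support contained in $\operatorname{Supp}_R(H_*(P)) \subseteq \{\fm\}$, so one can control them by repeatedly invoking the single-module case.

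To close the argument one needs the following key estimate, which is the hardest input: if $N$ has $\dim \operatorname{Supp}(N) < d$, then $\operatorname{length}(\operatorname{Tor}_i^R({}^e\!R, N))/p^{de} \to 0$. This is exactly the ``subleading-order'' vanishing that makes the limits well-defined; it is proved by induction on $\dim \operatorname{Supp}(N)$ and is the technical heart of Seibert/Roberts. Granting this estimate, the differentials and extension terms in the spectral sequence contribute only $O(p^{(d-1)e})$, so after dividing by $p^{de}$ the $E^2$ limits sum (without sign cancellation, once one tracks kernels and cokernels term by term) to the desired limits of the $h_i(F^eP)$, completing the proof.
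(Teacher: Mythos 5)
There are genuine gaps. The most serious is that your ``key estimate'' is false as stated. For $N = k$ one has $\dim \supp_R(N) = 0 < d$, yet
\begin{equation*}
\len_R\bigl(\Tor_0^R({}^e R, k)\bigr) = \len_R\bigl(R/\fm^{[p^e]}\bigr),
\end{equation*}
whose normalized limit is the Hilbert--Kunz multiplicity $e_{HK}(R) > 0$. This is also internally inconsistent with your own invocation of Seibert, which asserts that for any finite-length $M$ these normalized Tor lengths converge to limits that are positive in interesting cases. The estimate that actually powers Roberts' argument, visible in the paper's proof of Theorem~\ref{thmRoberts2}, is of a different character: for a finitely generated (and not finite-length) module $N$ of dimension $d' < d$, one has $h^p\bigl(\Hom_R(F^eP, N)\bigr)/p^{de} \to 0$. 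The Tor statement is not a substitute, and the two are not interchangeable.

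This is symptomatic of a deeper problem with your choice of spectral sequence. In
\begin{equation*}
E^2_{s,t} = \Tor_s^R\bigl({}^e R, H_t(P)\bigr) \Longrightarrow H_{s+t}(F^e P),
\end{equation*}
every nonzero $E^2$-term involves a finite-length module $H_t(P)$ and so scales on the order of $p^{de}$; all the terms live at the same scale, and there is no reason the differentials or extension data should be asymptotically small, so one cannot pass from convergence of $E^2$-lengths to convergence of abutment lengths. (The Euler characteristic $\chi(F^eP)$ \emph{is} determined by the $E^2$-page, which is why the Dutta multiplicity $\chi^\infty$ can be treated this way --- but individual $h_i$'s cannot.) By contrast, the argument of Roberts that the paper reproduces for Theorem~\ref{thmRoberts2} uses local duality to rewrite $h_{j-d}(F^eP)$ as $h^j\bigl(\Hom_R(F^eP, I)\bigr)$, where $I$ is the dualizing complex, together with the spectral sequence $E_2^{p,q} = H^p\Hom_R(F^eP, H^q(I)) \Rightarrow H^{p+q}\Hom_R(F^eP, I)$. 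There the rows with $q > 0$ involve modules $H^q(I)$ of dimension $d - q < d$ and contribute $o(p^{de})$; only the $q = 0$ row survives asymptotically, and that is exactly what renders the differentials and extensions negligible. Your spectral sequence has no such distinguished row.

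Separately, the induction via brutal truncation does not preserve the hypothesis: $\sigma_{\geq n}P$ has bottom homology $\coker(d_{n+1}) = P_n/\im(d_{n+1})$, which surjects onto $\im(d_n) \subseteq P_{n-1}$, a submodule of a free module of positive rank whenever $d_n \neq 0$. So the truncated complexes fall outside $\Perf^\fl(R)$ and the inductive framework cannot be set up as described. For what it is worth, the paper does not prove this proposition itself; it cites \cite[Theorem 7.3.3]{RobertsBook}, whose argument proceeds by the dualizing-complex route sketched above.
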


\begin{defn} With $R$ as above and  any $P \in \Perf^\fl(R)$, define 
  $$
  h_i^\infty(P) = \lim_{e \to \infty} \frac{h_i(F^eP)}{p^{de}} \and
  h^\infty(P) = \sum_i h_i^\infty(P). 
  $$
The 
  {\em Dutta multiplicity} of $P$ is
$$
\chi^\infty(P) = \sum_i (-1)^i h^\infty_i(P) = \lim_{e \to \infty} \frac{\chi(F^e P)}{p^{de}}.
$$
\end{defn}

Recall from the introduction that a {\em tiny complex} is a complex of finite rank free $R$-modules of the form $0 \to P_d \to \cdots P_1 \to P_0 \to 0$
such that $H(P)$ is nonzero and of finite length.
 %  By the New Intersection Theorem (see \cite[Theorem 7.2.1]{RobertsBook}), any complex $P$ with finite length homology must have at least $\dm(R) +1$ non-zero components,
 %  and thus tiny complexes have the property that the homology has minimal Krull dimension and the complex is concentrated in the smallest homological range as possible.
 %  Over a Cohen-Macaulay ring, a tiny complex is
 % necessarily the free resolution of a non-zero
 % finite length $R$-module of finite projective dimension.
 % If $P$ is a short complex (see the Introduction) over a noetherian ring $R$, then $P_\fp$ is a tiny complex over $R_\fp$ for each minimal prime $\fp$ in the support of $P$. 
The following result is due to Roberts; see the proof of \cite[7.3.5]{RobertsBook}.
It says that tiny complexes behave like resolutions of modules ``in the limit". 

\begin{rem}
   The philosophy that ``tiny complexes behave like resolutions" is also exploited in the work \cite{iyengar2022multiplicities}.
\end{rem}

\begin{thm}[{\cite[Theorem 7.3.5]{RobertsBook}}]\label{thmRoberts}
  Let $(R, \fm, k)$ be a complete local ring of characteristic $p > 0$ and dimension $d$ with a perfect residue field.
  If $P$ is a tiny complex, then $h_i^\infty(P) = 0$ for all $i \ne 0$ and $h_0^\infty(P) > 0$.
  In particular,
  $$
h^\infty(P) = \chi^\infty(P) > 0.
$$
\end{thm}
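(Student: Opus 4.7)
The plan is to establish separately the vanishing $h_i^\infty(P) = 0$ for $i \geq 1$ and the positivity $h_0^\infty(P) > 0$; given these, the identity $h^\infty(P) = \chi^\infty(P) = h_0^\infty(P) > 0$ follows at once from the definitions. I would begin by noting that, since Frobenius commutes with localization and preserves finite rank free modules, each $F^eP$ is itself a tiny complex of the same shape as $P$: its components have the same ranks as those of $P$, it is concentrated in degrees $[0,d]$, and localizing at any non-maximal prime $\fp$ gives a contractible complex (because $P_\fp$ was already contractible). In particular, Proposition~\ref{proplimitsexist} applies and the limits $h_i^\infty(P)$ are defined.

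For the vanishing of the higher Dutta limits, I would proceed by downward induction on $i$. The base case $i = d$ is straightforward (assuming $d \geq 1$): the module $H_d(F^eP) = \ker(F^eP_d \to F^eP_{d-1})$ is a submodule of the finite rank free module $F^eP_d$, hence has depth at least one; being also of finite length, it must vanish for all $e$, so $h_d^\infty(P) = 0$. For the inductive step $0 < i < d$, the crucial observation is that $F^eP$ has length exactly $d = \codim_R H(F^eP)$, placing it at the boundary of what the New Intersection Theorem permits. Using auxiliary short exact sequences of complexes (for instance, comparing $F^eP$ to a minimal free resolution of $H_0(F^eP)$, or to a suitable truncation of $F^eP$ itself), and exploiting the additivity of Dutta limits on such sequences together with the inductive hypothesis, one shows that $\ell(H_i(F^eP)) = O(p^{(d-1)e})$, forcing $h_i^\infty(P) = 0$.

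For the positivity $\chi^\infty(P) > 0$, I would relate it to a known-positive intersection multiplicity. The cleanest route is to compare $P$ with the Koszul complex $K$ on a system of parameters $x_1, \dots, x_d$ of $R$: since $F^eK$ is the Koszul complex on $x_1^{p^e}, \dots, x_d^{p^e}$, standard Hilbert--Kunz theory gives $\chi^\infty(K) = e(x_1, \dots, x_d; R) > 0$. Via the Peskine--Szpiro / Roberts intersection multiplicity formalism in characteristic $p$ (which delivers non-negativity of $\chi^\infty$ on tiny complexes, with strict positivity whenever a tiny complex represents a non-zero class against the Koszul complex in the appropriate Grothendieck group), one concludes $\chi^\infty(P) > 0$. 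Combined with the vanishing from the previous paragraph, this yields $h_0^\infty(P) = \chi^\infty(P) > 0$.

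The principal obstacle is the inductive vanishing of $h_i^\infty(P)$ for $0 < i < d$: the intermediate homology modules can be non-zero of positive length, and controlling the rate at which their lengths grow under iterated Frobenius requires delicately exploiting the boundary length-$d$ condition through New Intersection, since the naive approach via Frobenius flatness only applies when $R$ is regular (and even then yields exact values rather than the needed asymptotic vanishing).
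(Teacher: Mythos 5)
Your proposal sketches a downward induction on $i$ but, by your own admission, leaves the inductive step $0 < i < d$ as "the principal obstacle." This is exactly where the argument fails to exist: none of the tools you gesture at actually close it. Comparing $F^eP$ to a minimal free resolution of $H_0(F^eP)$ does not work because $H_0(F^eP)$ need not have finite projective dimension when $R$ is not regular (indeed, when $R$ is not Cohen--Macaulay, no finite-length module does), and the resolution changes with $e$ in a way you cannot control. Truncating $F^eP$ at degree $i$ introduces a top homology module (a cycle module of $F^eP$) that is not of finite length, so the Dutta-limit formalism no longer applies; truncating at the other end has the same defect. Additivity of Dutta limits along short exact sequences of complexes is true, but one needs to name the actual third complex and know something about it, and you do not. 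Your base case $i = d$ is fine, and the observation that $F^eP$ is again tiny is fine, but the heart of the theorem is missing.

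The argument the paper actually records (for the slightly more general Theorem~\ref{thmRoberts2}, following Roberts) is entirely different and does not induct on $i$. One applies $\Hom_R(-, I)$ where $I$ is the normalized dualizing complex of $R$. Grothendieck duality plus Matlis duality convert $h_{j-d}(F^eP)$ into $h^j\Hom_R(F^eP, I)$, and the hypercohomology spectral sequence $E_2^{p,q} = H^p\Hom_R(F^eP, H^q(I))$ bounds this by $\sum_{p+q=j} h^p\Hom_R(F^eP, H^q(I))$. The terms with $q > 0$ die after normalizing by $p^{de}$ and taking $e \to \infty$ because $\dim H^q(I) = d - q < d$, and the $q=0$ terms vanish outright for $j > d$ because $F^eP$ lives in degrees $[0,d]$. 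This is where the "boundary length-$d$" hypothesis gets used, but through duality rather than through the New Intersection Theorem directly. If you want to complete your write-up, you should abandon the induction and run this duality argument; it handles all $i > 0$ simultaneously. As for positivity of $\chi^\infty(P)$, both you and the paper defer to Roberts' intersection-multiplicity machinery (local Chern characters), which is appropriate to cite rather than reprove; your phrasing makes it sound like you are deriving it, when in fact you are invoking it.
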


\begin{rem} \label{rem1222}
  If $P$ has the form $\cdots \to 0 \to P_d \to \cdots \to P_1 \xra{d_1} P_0 \to 0 \to \cdots$ with $P_1 = R^n$, $P_0 = R$, and $d_1 = (a_1, \dots, a_n)$
  then
$$
\chi^\infty(P) = \lim_{e \to \infty} \frac{\len_R(R/(a_1^{p^e}, \dots, a_n^{p^e}))}{p^{de}}, 
$$
which is by definition  the {\em Hilbert-Kunz multiplicity} of the ideal $(a_1, \dots, a_n)$.
\end{rem}

\begin{rem} When $R$ is a Roberts ring --- e.g., a complete intersection --- we have $\chi^\infty(P) = \chi(P)$ for any tiny complex $P$.
\end{rem}

In fact, a straightforward variant of the argument used by Roberts in the proof of Theorem \ref{thmRoberts} shows a bit more:

\begin{thm}[{\cite[Theorem 7.3.5]{RobertsBook}, \cite[Proposition 1.3]{dutta1996ext}}]  \label{thmRoberts2}
  With the same assumptions on $R$ as in Theorem \ref{thmRoberts},
  if $P$ is a complex of finite rank free $R$-modules of the form
  $$
  0 \to P_m \to \cdots \to P_1 \to P_0 \to 0
  $$
that has finite length homology, then $h_i^\infty(P) =  0$ for all $i > m- d$. 
\end{thm}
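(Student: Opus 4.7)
The plan is to adapt Roberts' proof of Theorem \ref{thmRoberts} in a straightforward way, tracking the dependence on the length $m$ of the complex. Roberts' argument is built on the formalism of local Chern characters (see \cite[Chapter 7]{RobertsBook}). For a perfect complex $P$ of length $m$ whose homology is supported at $\mathfrak{m}$, the local Chern character $\operatorname{ch}(P) = \sum_j \operatorname{ch}^j(P)$ has components $\operatorname{ch}^j(P)\colon A_j(\operatorname{Spec} R)_{\mathbb{Q}} \to \mathbb{Q}$ that vanish for $j > m$. This length-dependent vanishing is the only input we change compared to Roberts' original proof (where it is used only for $m = d$).

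Combining the local Riemann-Roch theorem with the fact that Frobenius realizes the second Adams operation (see \cite[Theorem B]{gillet1987intersection} and the remark following Lemma \ref{lem:fundamentalSES}), one obtains the formula
\[
\chi(F^e P) \;=\; \sum_{j=0}^{d} p^{je}\bigl(\operatorname{ch}^j(P) \cap \tau_j(R)\bigr),
\]
where $\tau(R) = \sum_j \tau_j(R)$ is the localized Todd class of $R$, with $\tau_j(R) \in A_j(\operatorname{Spec} R)_{\mathbb{Q}}$. This realizes $\chi(F^e P)$ as a polynomial in $p^e$ of degree at most $d$, whose leading coefficient recovers the Dutta multiplicity $\chi^\infty(P) = \operatorname{ch}^d(P) \cap \tau_d(R)$. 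The New Intersection Theorem ensures $m \ge d$, so this leading term is not forced to vanish.

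The final step is to refine this statement from the Euler characteristic $\chi$ to the individual homologies $h_i$. Roberts achieves this in the tiny case via a positivity argument, combining $h_i(F^e P) \ge 0$ with the sign alternation in the Euler characteristic to isolate each $h_i^\infty$. Applied in our setting, with the improved Chern character vanishing $\operatorname{ch}^j(P) = 0$ for $j > m$, the same strategy shows that the $p^{de}$-coefficient of $h_i(F^e P)$ vanishes as soon as $i > m-d$, yielding $h_i^\infty(P) = 0$ in that range.

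The principal technical obstacle is precisely this last step, since Riemann-Roch naturally computes the Euler characteristic rather than the individual $h_i$. Adapting Roberts' positivity argument to the present setting requires care to propagate the length-dependent Chern character vanishing through each homological degree. As an alternative, the required refinement is already established in \cite[Proposition 1.3]{dutta1996ext}, from which the theorem follows immediately; this is why the statement is attributed to both sources.
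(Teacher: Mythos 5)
The paper's proof (which reproduces Roberts') has nothing to do with local Chern characters or Riemann--Roch. It uses the dualizing complex $I = (0 \to I^0 \to \cdots \to I^d \to 0)$ together with Grothendieck duality and Matlis duality: one has $h^j\Hom_R(F^e(P),I) = h_{j-d}(F^e(P))$, and the spectral sequence $E_2^{p,q} = H^p\Hom_R(F^e(P), H^q(I)) \Rightarrow H^{p+q}\Hom_R(F^e(P),I)$ combined with the dimension estimate $\dm H^q(I) = d-q$ shows that only the $q=0$ terms survive after dividing by $p^{de}$ and passing to the limit; the vanishing then follows because $\Hom_R(F^e(P), H^0(I))$ is concentrated in cohomological degrees at most $m$. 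Your sketch mischaracterizes Roberts' argument, and more to the point it does not prove the theorem.

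The gap in your proposal is concrete and you name it yourself. Local Riemann--Roch yields a formula for the Euler characteristic $\chi(F^e P)$ as a polynomial in $p^e$, and the vanishing $\operatorname{ch}^j(P) = 0$ for $j > m$ controls that polynomial. But the statement to be proved concerns the individual quantities $h_i^\infty(P)$, not their alternating sum, and there is no ``positivity argument'' that extracts these from $\chi^\infty$ alone: the alternating signs allow the higher $h_i(F^eP)$ to cancel against one another inside $\chi(F^eP)$ while still growing like $p^{de}$ individually. Your phrase ``the $p^{de}$-coefficient of $h_i(F^e P)$'' is not meaningful in this context, since $h_i(F^eP)$ is not given as a polynomial in $p^e$ by Riemann--Roch. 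You acknowledge all this (``the principal technical obstacle is precisely this last step'') and then retreat to citing \cite[Proposition 1.3]{dutta1996ext}. That citation is legitimate as a reference, but it is not a proof; the entire content of the theorem is the step you are declining to carry out. To actually close the gap you would need something like the duality argument above, which lets you bound each $h_{j-d}(F^eP)$ separately by the cohomology of $\Hom_R(F^e(P), H^0(I))$ rather than working only with $\chi$.
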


For the sake of completeness, we include Roberts' proof:

\begin{proof} 
  Let $I = (0 \to I^0 \to \cdots \to I^d \to 0)$ be the dualizing complex of $R$. This complex has the following properties:
  \begin{itemize}
  \item $H^j(I)$ is a finitely generated $R$-module of dimension at most $d-j$ for all $0 \leq j \leq d$.
  \item For each $j$ we have
    $$ I^j = \bigoplus_{\fp \in \Spec(R), \dm(R/\fp) = d-j} E_R(R/\fp);
    $$
    in particular,  $I^d = E_R(k)$, the injective hull of the residue field.
  \end{itemize}
By Grothendieck duality and the fact that $F^e(P)$ has finite length homology for all $e \geq 0$,
  there is a quasi-isomorphism  $\Hom_R(F^e(P), I) \sim \Sigma^{-d} \Hom(F^e(P), E(k))$.
  By Matlis Duality, it follows that $h^j \Hom_R(F^e(P), I) = h_{j-d}(F^e(P))$. Taking limits yields the equality
  \begin{equation} \label{E311}
  h^\infty_{j-d}(P) = \lim_{e \to \infty}  \frac{h^j \Hom_R(F^e(P), I)}{p^{de}}.
\end{equation}

  On the other hand, the spectral sequence
  $$
  E^{p,q}_2 = H^p\Hom_R(F^e(P), H^q(I)) \Longrightarrow H^{p+q} \Hom_R(F^e(P), I)
  $$
  gives
  $$
  h^j \Hom_R(F^e(P), I)) \leq \sum_{p+q = j} h^p \Hom_R(F^e(P), H^q(I)),
  $$
  for each $e \geq 0$. 
  Since  $H^q(I)$ is finitely generated and of dimension $d-q$, we have
  $$
  \lim_{e \to \infty} \frac{h^p\Hom_R(F^e(P), H^q(I))}{p^{de}} = 0
  $$
  for all $q > 0$ and all $p$; see  \cite[Proof of Theorem 7.3.5]{RobertsBook}. It follows that
  $$
  h^\infty_{j-d}(P) =  \lim_{e \to \infty} \frac{h^j \Hom_R(F^e(P), I)}{p^{de}} 
  \leq 
  \lim_{e \to \infty} \frac{h^j\Hom_R(F^e(P), H^0(I))}{p^{de}},
  $$
  where the equality is \eqref{E311}. 
  The result follows since $H^j\Hom_R(F^e(P), H^0(I)) = 0$ whenever $j > m$.
\end{proof}

\section{The Total Rank Conjecture for Roberts Rings in Characteristic 2}\label{sec:TotalRankRobertsChar2}

We initially prove the Total Rank Conjecture for a special class of rings:

\begin{defn} A local ring $(R, \fm)$ with $\chr(R) = 2$ is called {\em quasi-Roberts} if it is Cohen-Macaulay and for each
  $R$-module $M$ of finite length and finite projective dimension we have $\len(F(M)) = 2^{\dm(R)} \cdot \len(M)$, where $F$ denotes extension of scalars along the Frobenius map.
\end{defn}

Examples of quasi-Roberts rings include all complete intersections (see \cite[Proposition 7.1]{gillet1987intersection}). There exist Gorenstein rings that are not quasi-Roberts; for instance, the work of Miller--Singh \cite[Section 6]{miller2000intersection} constructs a $5$-dimensional Gorenstein ring $R$ in characteristic $2$ and a module $M$ of finite length over $R$ with $\ell (M) = 222$ but
$$\ell (F^2 (M)) = 2^5 \cdot \left( 220 + \frac{1}{2} \right) < 2^5 \ell(M).$$

\begin{thm} \label{thmA}
 Suppose $(R, \fm)$ is a local ring of characteristic $2$ and dimension $d$ that is quasi-Roberts. For any non-zero $R$-module $M$ of finite length and finite projective dimension, we have
  $$
  \beta(M) \geq 2^d+ \frac{2 \len_R(\Lambda^2_R(M))}{\len_R(M)} \geq 2^d.
  $$
  Moreover, if $\beta(M) = 2^{d}$ then $M \cong R/(f_1, \dots, f_d)$ for a regular sequence of elements $f_1, \dots, f_d \in \fm$.
\end{thm}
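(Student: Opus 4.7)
Let $P$ be the minimal free resolution of $M$. Since $R$ is Cohen--Macaulay of dimension $d$ and $M$ has finite length and finite projective dimension, the Auslander--Buchsbaum formula gives $\mathrm{pd}_R(M) = d$, so $P$ is a tiny complex concentrated in degrees $[0, d]$. The plan is to apply both short exact sequences of Lemma \ref{lem:fundamentalSES} to $P$ and exploit Dutta multiplicities together with the quasi-Roberts hypothesis to produce a refined rank inequality.

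The first step is a Dutta-multiplicity computation. Since $P$ is tiny, Theorem \ref{thmRoberts} gives $\chi^\infty(P) = h_0^\infty(P)$. Right exactness of Frobenius yields $H_0(F^e P) = F^e M$, and the quasi-Roberts hypothesis (iterated) gives $\len(F^e M) = 2^{de}\len(M)$, so $\chi^\infty(P) = \len(M)$ and hence $\chi^\infty(F(P)) = 2^d\len(M)$. Applying Theorem \ref{thmRoberts2} to the complex $P \otimes_R P$ of length $2d$, together with a Serre intersection multiplicity style argument adapted to the quasi-Roberts setting, shows $\chi^\infty(P \otimes_R P) = 0$. Applying $\chi^\infty$ to both sequences of Lemma \ref{lem:fundamentalSES} and using $\widetilde{T^2}(P) \simeq P \otimes_R P$ then yields
\[ \chi^\infty(\widetilde{\L^2}(P)) \;=\; \tfrac{1}{2}\bigl(\chi^\infty(P \otimes_R P) - \chi^\infty(F(P))\bigr) \;=\; -2^{d-1}\len(M). \]

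A direct count of ranks in the two short exact sequences of Lemma \ref{lem:fundamentalSES}, using that Frobenius preserves ranks of free modules, yields the quadratic relation $\beta(\widetilde{\L^2}(P)) = \binom{\beta(M)}{2}$. By Theorem \ref{thmRoberts2} we have $h_i^\infty(\widetilde{\L^2}(P)) = 0$ for $i > d$; by Lemma \ref{lem419} combined with a quasi-Roberts computation (using that $F^e$ commutes with $\L^2$ up to natural isomorphism on free modules), $h_0^\infty(\widetilde{\L^2}(P)) = \len(\L^2 M)$. Separating even and odd contributions in $\chi^\infty(\widetilde{\L^2}(P)) = -2^{d-1}\len(M)$ then forces
\[ h^\infty(\widetilde{\L^2}(P)) \;\geq\; 2\len(\L^2 M) + 2^{d-1}\len(M). \]
The main obstacle is to bridge this lower bound on the Dutta multiplicity of $\widetilde{\L^2}(P)$ to a matching lower bound on $\beta(\widetilde{\L^2}(P))$. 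Since $h^\infty$ and $\beta$ are not directly comparable in general, this requires exploiting the specific structure of $\widetilde{\L^2}(P)$ as the Dold--Kan extension of a polynomial functor of degree two applied to a minimal tiny complex. Once the appropriate comparison is established, the quadratic relation $\beta(\widetilde{\L^2}(P)) = \binom{\beta(M)}{2}$ translates it into the desired inequality $\beta(M) \geq 2^d + 2\len(\L^2 M)/\len(M)$.

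For the equality case, $\beta(M) = 2^d$ forces $\len(\L^2 M) = 0$ from the main inequality, so $M$ is cyclic (since $\L^2(M \otimes_R k) = \L^2_k(k^{\beta_0(M)})$ vanishes only when $\beta_0(M) \leq 1$). Writing $M = R/I$, tightness throughout the chain of multiplicity inequalities forces $P$ to be isomorphic to the Koszul complex on a minimal generating set of $I$ with exactly $d$ elements, so $I = (f_1, \ldots, f_d)$ is generated by a regular sequence.
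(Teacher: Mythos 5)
Your proposal takes a genuinely different route from the paper---you go through Dutta multiplicities $h^\infty$ and $\chi^\infty$, whereas the paper's proof of Theorem~\ref{thmA} works entirely with ordinary lengths, reserving the Dutta-multiplicity machinery for the general Theorem~\ref{thmB}. Unfortunately, your route has several gaps, at least one of which you flag yourself, and another of which is a genuine error that cannot be fixed as stated.

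The fatal problem is the claim $h_0^\infty(\widetilde{\L^2}(P)) = \len_R(\L^2_R M)$. You correctly compute $H_0(\widetilde{\L^2}(F^e P)) \cong \L^2(F^e M) \cong F^e(\L^2 M)$, so $h_0^\infty(\widetilde{\L^2}(P)) = \lim_e \len_R F^e(\L^2 M)/2^{de}$. But the quasi-Roberts hypothesis only asserts $\len(F(N)) = 2^d \len(N)$ for modules $N$ of \emph{finite projective dimension}, and $\L^2_R M$ has no reason to have finite projective dimension. For a general finite-length $N$, the limit $\lim_e \len F^e(N)/p^{de}$ is a Hilbert--Kunz-type invariant that need not equal $\len(N)$ (already for $N = k$ it is the Hilbert--Kunz multiplicity of $R$, which exceeds $1$ unless $R$ is regular). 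So this identity fails, and the asserted lower bound on $h^\infty(\widetilde{\L^2}P)$ loses its meaning.

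Beyond that, the step you explicitly call ``the main obstacle''---bridging a lower bound on $h^\infty(\widetilde{\L^2}P)$ to a lower bound on $\beta(\widetilde{\L^2}P)$---is indeed the heart of the matter, and no argument is supplied. Even if some such comparison existed, the proposed quadratic relation $\beta(\widetilde{\L^2}P) = \binom{\beta(M)}{2}$ together with a bound like $h^\infty \leq C\cdot\beta$ would produce an inequality quadratic in $\beta(M)$, not the linear inequality $\beta(M) \geq 2^d + 2\len(\L^2 M)/\len(M)$ that is the theorem's content. The arithmetic does not match the target. The equality case is likewise asserted via ``tightness throughout the chain of inequalities,'' which is not a proof.

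The paper's argument is considerably simpler and avoids all of these issues. The key observation you are missing: since $R$ is Cohen--Macaulay and $P$ is tiny, $F(P)$ is an honest finite free \emph{resolution} of $F(M)$ (not just ``in the limit''), so $H_j(F(P)) = 0$ for all $j \geq 1$ exactly. Feeding this into the long exact sequence \eqref{E419b} gives the exact equalities $h_j(\widetilde{S^2}P) = h_j(\widetilde{\L^2}P)$ for $j \geq 2$ and the relation \eqref{E310c} involving $\len F(M)$. Combining with the length inequality \eqref{E310a} from \eqref{E419a} yields $h(P \otimes_R P) \geq \len F(M) + 2\len(\L^2 M)$ directly, in terms of honest lengths. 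The quasi-Roberts hypothesis then replaces $\len F(M)$ by $2^d\len(M)$, and the closing step is the elementary inequality $h(P\otimes_R P) \leq \len(M)\cdot\beta(M)$, obtained from the quasi-isomorphism $P\otimes_R P \simeq M \otimes_R P$. Equality in that last step forces the differential on $M\otimes_R P$ to vanish, which together with $\L^2 M = 0$ (hence $M = R/I$) makes $I/I^2 \cong \Tor_1^R(R/I, R/I)$ a free $R/I$-module; Ferrand--Vasconcelos then shows $I$ is generated by a regular sequence. I would encourage you to rework the argument with ordinary lengths along these lines; the Dutta-multiplicity limits discard exactly the extra precision that the Cohen--Macaulay hypothesis buys you here.
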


\begin{proof} Initially, let $R$ be any local ring of characteristic $2$ and $P$  any object of $\Perf^\fl_{\geq 0}(R)$. 

  The long exact sequence \eqref{E419a} of Lemma \ref{lem:fundamentalSES}
gives a short exact sequence
{\small
  $$
0 \to  \coker\left(H_{j+1}(\wt{S^2}(P)) \to H_{j}(\wt{\L^2}(P))\right) \to H_j(P \otimes_R P) \to
 \ker\left(H_{j}(\wt{S^2}(P)) \to H_{j-1}(\wt{\L^2}(P))\right)  \to 0
  $$
  }for each $j \geq 0$, from which  it follows that
\begin{equation} \label{E310a}
h(P \otimes_R P) \geq h_0(\wt{S^2}(P)) + \sum_{j\geq 0} \left|h_j(\wt{\L^2} (P)) - h_{j+1}(\wt{S^2}(P))\right|.
\end{equation}
%The second short exact sequence in Lemma \ref{lem:fundamentalSES} yields the long exact sequence
%\begin{equation} \label{E310d}
%\cdots \to H_i(F(P)) \to H_i(\wt{S^2}(P)) \to H_i(\wt{\L^2}(P))  \to H_{i-1}(F(P)) \to \cdots.
%\end{equation}

Let us now assume that $R$ is Cohen-Macaulay and $P$ is a tiny complex. Then $P$ is a resolution of $M := H_0(P)$
and  $F(P)$ is a resolution of $F(M)$; in particular, 
$H_j(F(P)) = 0$ for $j \ne 0$. The long exact sequence \eqref{E419b} of Lemma \ref{lem:fundamentalSES} thus gives the equations
\begin{equation} \label{E310b}
  h_j(\wt{S^2}(P)) = h_j(\wt{\L^2}(P)), \text{ for all $j \geq 2$,}
  \end{equation}
  and the exact sequence
$$
  0 \to H_1(\wt{S^2}(P)) \to H_1(\wt{\L^2}(P))  \to F(M) \to
  H_0(\wt{S^2}(P)) \to H_0(\wt{\L^2}(P))  \to 0.
$$
The latter yields
  \begin{equation} \label{E310c}
  -  h_1(\wt{S^2}(P)) + h_1(\wt{\L^2}(P)) 
  +   h_0{\wt{S^2}(P)} - h_0{\wt{\L^2}(P)} = \len_R(F(M)).
\end{equation}
Combining \eqref{E310a} and \eqref{E310b} gives
$$
h(P \otimes_R P) \geq h_0(\wt{S^2}(P)) +  h_0(\wt{\L^2}(P)) - h_1(\wt{S^2}(P)) + h_1(\wt{\L^2}(P))
$$
and then applying \eqref{E310c} and using that $H_0(\wt{\L^2}(P)) \cong \L^2_R(M)$ by Lemma \ref{lem419}, 
we obtain
\begin{equation} \label{E310z}
h(P \otimes_R P)     \geq \len_R(F(M)) + 2 \len_R (\L^2_R(M)).
\end{equation}

Now assume that $R$ is quasi-Roberts, so that $\len_R(F(M)) = 2^d \len_R(M)$.
Then \eqref{E310z} gives 
$$
h(P \otimes_R P)     \geq 2^d \len_R(M) + 2 \len_R (\L^2_R(M)).
$$
Using the quasi-isomorphism $P \otimes_R P \sim M \otimes_R P$, we obtain
\begin{equation} \label{E310f}
  \len_R(M) \cdot \beta(M) \geq h(P \otimes_R P)
\end{equation}
with equality holding if and only if the differential on $P \otimes_R M$ is $0$. The first result follows by dividing through by $\len_R(M) > 0$. 

We can prove the final claim via an argument that is nearly identical to that found in \cite[2.4]{walker2017total}; in more detail:
If $\beta(M) = 2^d$ then $\L^2_R(M) = 0$ and the inequality in \eqref{E310f} must be an equality.
The first condition means $M \cong R/I$ for some ideal $I$ of finite projective dimension, and the latter condition implies that the complex $R/I \otimes_R P$ has a trivial differential.
But then $I/I^2 \cong \Tor_1^R(R/I, R/I)$ is a free $R/I$-module, which, together with the fact that $\pd_R(I) < \infty$, implies that $I$ is generated by a regular sequence by
a theorem due to Ferrand and Vasconcelos \cite[2.2.8]{bruns1998cohen}.
\end{proof}

For a general Cohen-Macaulay ring $R$ and finite length $R$-module $M$,  we do not know whether $\beta(M) = 2^d$ implies that $M$ must be
  isomorphic to the quotient of $R$ by a regular sequence. It seems reasonable to conjecture that this is so;  see Conjecture \ref{ExtendedConjecture} in the Introduction.

\section{The Total Rank Conjecture in Characteristic 2}\label{sec:genTRC}

The goal of this section is to prove the following:

\begin{thm} \label{thmB}
  Let $(R, \fm, k)$ be a local ring of characteristic $2$ and dimension $d$. If  $P$ is a finite rank free complex of the form
  $$
  0 \to P_{d+1} \to \cdots \to P_1 \to P_0 \to 0
  $$
  having finite length, nonzero homology, then $\sum_j \rank_R(P_j) \geq \beta(P) \geq 2^d$.
\end{thm}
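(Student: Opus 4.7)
The plan is to extend the strategy of Theorem \ref{thmA} by iterating the Frobenius endomorphism on $P$ and passing to the limit of Dutta multiplicities, using Theorem \ref{thmRoberts2} to control the high-degree contributions. First I would reduce via completion and a faithfully flat extension of the residue field to the case where $R$ is complete with perfect residue field $k$. For each $e \geq 0$, the complex $P^{(e)} := F^e P$ has the same component ranks as $P$ and is still a length-$(d{+}1)$ free complex with finite length homology. Applying Lemma \ref{lem:fundamentalSES} to $P^{(e)}$, the long exact sequence arising from $0 \to \wt{\L^2}P^{(e)} \to P^{(e)} \otimes_R P^{(e)} \to \wt{S^2}P^{(e)} \to 0$ yields
$$
h(P^{(e)} \otimes_R P^{(e)}) \geq h_0(\wt{S^2}P^{(e)}) + \sum_{j \geq 0} \bigl|h_j(\wt{\L^2}P^{(e)}) - h_{j+1}(\wt{S^2}P^{(e)})\bigr|,
$$
exactly as in the proof of Theorem \ref{thmA}, while the characteristic-$2$ sequence $0 \to FP^{(e)} \to \wt{S^2}P^{(e)} \to \wt{\L^2}P^{(e)} \to 0$ (with $FP^{(e)} = P^{(e+1)}$) furnishes a second long exact sequence relating the homologies of $\wt{S^2}P^{(e)}$ and $\wt{\L^2}P^{(e)}$ to those of $P^{(e+1)}$.

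On the upper-bound side, filtering $P^{(e)} \otimes_R P^{(e)}$ by the homological degree of the first factor gives $h(P^{(e)} \otimes_R P^{(e)}) \leq \rank(P) \cdot h(P^{(e)})$. Dividing each of the above inequalities by $p^{de} = 2^{de}$ and letting $e \to \infty$, using Proposition \ref{proplimitsexist} to ensure the relevant limits exist, one obtains
$$
\rank(P) \cdot h^\infty(P) \;\geq\; h^\infty(P \otimes_R P) \;\geq\; h_0^\infty(\wt{S^2}P) + \sum_{j \geq 0} \bigl|h_j^\infty(\wt{\L^2}P) - h_{j+1}^\infty(\wt{S^2}P)\bigr|.
$$
Since $P$ has length $d + 1$, Theorem \ref{thmRoberts2} gives $h_j^\infty(P) = 0$ for all $j > 1$, so only $h_0^\infty(P)$ and $h_1^\infty(P)$ contribute. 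Passing to the limit in the characteristic-$2$ LES, together with the identity $\lim_{e} h_j(P^{(e+1)})/p^{de} = 2^d h_j^\infty(P)$, then forces $h_j^\infty(\wt{S^2}P) = h_j^\infty(\wt{\L^2}P)$ for all $j \geq 3$ and constrains the three remaining differences in degrees $j = 0, 1, 2$ by explicit multiples of $2^d h_0^\infty(P)$ and $2^d h_1^\infty(P)$, subject to the Adams-operation identity $\chi^\infty(\wt{S^2}P) - \chi^\infty(\wt{\L^2}P) = 2^d \chi^\infty(P)$.

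The hard part will be choosing signs in the combinatorial lower bound and combining them with these LES constraints to extract the inequality $h^\infty(P \otimes_R P) \geq 2^d h^\infty(P)$, from which $\rank(P) \geq 2^d$ follows whenever $h^\infty(P) > 0$; one must then either verify directly that $h^\infty(P) > 0$ for any non-exact length-$(d{+}1)$ complex with finite length homology, or bypass the ratio argument by working with auxiliary complexes whose Dutta multiplicities are visibly positive. In the tiny quasi-Roberts setting of Theorem \ref{thmA}, neither of these subtleties arose because $F$ preserved the property of being a resolution and the equality $\len F(M) = 2^d \len(M)$ held on the nose; here, that equality is only available asymptotically, which is precisely what Frobenius iteration and the Dutta limit are designed to supply.
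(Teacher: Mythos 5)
Your outline matches the paper's proof of Theorem \ref{thmB} in structure: reduce to $R$ complete with perfect residue field, use Frobenius iterates $F^eP$ and Dutta limits, invoke Theorem \ref{thmRoberts2} to kill $h_j^\infty$ for $j>1$, combine the two long exact sequences from Lemma \ref{lem:fundamentalSES} with the equality $h^\infty(F(P))=2^dh^\infty(P)$, and cap off with Lemma \ref{lem:tensorIneq}. But you explicitly leave two things open, and both require genuine arguments that do not follow formally from what you have written.

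First, the ``hard part'' you defer --- extracting $h^\infty(P\otimes_RP)\geq 2^dh^\infty(P)$ from the absolute-value inequality plus the LES constraints --- is the entire content of the proof, and it is more delicate than ``choosing signs.'' Theorem \ref{thmRoberts2} kills $h_j^\infty(F^{e+1}P)$ only for $j\geq 2$, so the equality $h_j^\infty(\wt{S^2}P)=h_j^\infty(\wt{\L^2}P)$ is available only for $j\geq 3$, not $j\geq 2$. The $j=2$ strand of the LES therefore contributes a leftover image term $L_e=\im(H_2(F^{e+1}P)\to H_2(\wt{S^2}(F^eP)))$, and one needs Theorem \ref{thmRoberts2} again to see that $\len(L_e)/2^{de}\to 0$ before the $j=2$ piece gives the bound $h_1^\infty(F(P))\leq h_1^\infty(\wt{S^2}P)+h_2^\infty(\wt{\L^2}P)-h_2^\infty(\wt{S^2}P)$. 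A parallel bound comes from the tail of the LES for $h_0^\infty(F(P))$. Adding these and matching them term-by-term against the absolute-value sum (choosing the sign $|a|\geq -a$ at $j=0$, $|a|\geq a$ at $j=1,2$, and using the $j\geq 3$ cancellation) is what yields $h^\infty(P\otimes_RP)\geq 2^dh^\infty(P)$. This is not a formal consequence of your constraints and the Adams-operation identity; one has to carry out the bookkeeping. Incidentally, the Adams-operation identity $\chi^\infty(\wt{S^2}P)-\chi^\infty(\wt{\L^2}P)=2^d\chi^\infty(P)$ that you invoke is not actually used in this proof; it is what the paper uses in the odd-characteristic case.

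Second, positivity of $h^\infty(P)$ is not something one can leave to ``verify directly or bypass.'' Without the quasi-Roberts hypothesis of Theorem \ref{thmA} there is no reason $h^\infty(P)$ should be positive a priori, and the ratio argument collapses if it vanishes. The paper supplies a real argument: if $H_0(P)\neq 0$, then $H_0(P)$ surjects onto $k$ and $h_0^\infty(P)\geq e_{HK}(R)>0$; if $H_0(P)=0$, then $P$ is quasi-isomorphic to the suspension of a tiny complex $P'$, the New Intersection Theorem forces $H_0(P')\neq 0$, and $h_1^\infty(P)=h_0^\infty(P')>0$ by the first case. You should include this step explicitly. With these two pieces filled in, your outline becomes the paper's proof.
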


\begin{rem} As noted in the introduction, the previous Theorem is false without the assumption on the characteristic, even for regular rings.  Indeed,
 if $(R, \fm, k)$ is any  local ring of dimension $d \geq 8$ such that  $\chr(k) \ne 2$, then as shown in \cite{iyengar2018examples} there exists a complex of the form in the Theorem
  with $\beta(P) < 2^d$.
\end{rem}

\begin{cor} \label{MainCor}
  Suppose $(R, \fm, k)$ is a local ring of dimension $d$ such that $\chr(k) = 2$ and let $P$ be a tiny complex over $R$.
Then $\beta(P) \geq 2^{\dm(R/2R)} \geq 2^{d-1}$; in particular, if $2$ is nilpotent in $R$ (e.g., if $\chr(R) = 2$), then $\beta(P) \geq 2^d$.  
\end{cor}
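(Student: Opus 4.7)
My plan is to reduce to the characteristic $2$ case by passing from $R$ to its quotient $R' := R/2R$. Since $\chr(k) = 2$ forces $2 \in \fm$, the quotient $R'$ is a local ring with residue field $k$ of characteristic $2$. Setting $d' := \dm(R')$, Krull's principal ideal theorem gives $d' \geq d - 1$, with $d' = d$ whenever $2$ is nilpotent in $R$ (so that $2$ lies in every minimal prime of $R$). The goal is then to apply Theorem \ref{thmB} to the complex $P' := P \otimes_R R'$ viewed as a complex of free $R'$-modules.

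First I would replace $P$ by its minimal model, a direct summand of $P$ whose total rank is at most $\rank_R(P)$; this reduces us to the case where $P$ has differentials with entries in $\fm$. Then $P'$ has differentials with entries in the maximal ideal $\fm/2R$ of $R'$, so $P'$ is itself minimal over $R'$ with the same ranks as $P$. Since a nonzero minimal complex of finite rank free modules cannot be split exact, $P'$ is non-exact and $H(P') \neq 0$. The complex $P'$ is also concentrated in degrees $[0, d] \subseteq [0, d' + 1]$, fitting the form required by Theorem \ref{thmB}. For the finite length condition, I would argue by localization: any non-maximal prime $\fq$ of $R'$ corresponds to a prime $\fp$ of $R$ with $2 \in \fp$ and $\fp \neq \fm$, and since $H(P)$ is of finite length over $R$, $P_\fp$ is a bounded exact complex of finite rank free $R_\fp$-modules, hence contractible; therefore $(P')_\fq \cong P_\fp \otimes_{R_\fp} R'_\fq$ is contractible, so $H(P')_\fq = 0$. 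Combined with finite generation, this forces each $H_i(P')$ to have finite length over $R'$.

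With the hypotheses verified, Theorem \ref{thmB} applied to $P'$ over the characteristic $2$ local ring $R'$ of dimension $d'$ yields
$$
\rank_R(P) = \rank_{R'}(P') \geq 2^{d'} = 2^{\dm(R/2R)} \geq 2^{d-1},
$$
and the final claim follows since $d' = d$ when $2$ is nilpotent in $R$. I expect the main subtlety to be verifying that $P'$ still satisfies the hypotheses of Theorem \ref{thmB} over $R'$ — particularly the finite length nonzero homology condition — which is precisely why the reduction to a minimal model and the localization argument at primes of $R'$ are essential.
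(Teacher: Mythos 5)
Your proof is correct and follows the same route as the paper: reduce modulo $2$ to the characteristic-$2$ local ring $R/2R$, bound $\dm(R/2R)$ by Krull's principal ideal theorem, and apply Theorem~\ref{thmB} to $P/2P$. The paper states this in one line without verifying that $P/2P$ has finite length nonzero homology; your minimal-model and localization arguments correctly supply those omitted details.
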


\begin{proof} 
  Since $\rank_R(P) = \rank_{R/2}(P/2P)$ and $\dm(R/2) \in \{\dm(R), \dm(R) - 1\}$, this follows from the 
  Theorem applied to the complex $P/2P$ of free $R/2$-modules.
\end{proof}

The proof of Theorem \ref{thmB} uses the following lemma.
  
\begin{lem}\label{lem:tensorIneq}
    Assume $R$ is a complete local ring of characteristic $p > 0$ with a perfect residue field, and let $P \in \Perf^\fl (R)$. Then there is an inequality
    $$
    h^\infty(P \otimes_R P) \leq h^\infty(P) \beta (P).
    $$
\end{lem}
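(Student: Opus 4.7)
The plan is to control $h(F^e P \otimes_R F^e P)$ at each finite stage and then divide by $p^{de}$ and let $e\to\infty$. Two ingredients are needed: a reduction to the case that $P$ is minimal, and the uniform bound $h(Q \otimes_R Q) \leq \rank_R(Q)\cdot h(Q)$ for an arbitrary $Q \in \Perf^\fl(R)$.

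For the reduction, I would replace $P$ by a homotopy-equivalent minimal complex $P^{\min}$; since $F^e$ preserves chain homotopies and commutes with tensor products of $R$-modules, neither $h^\infty(P)$ nor $h^\infty(P\otimes_R P)$ is affected, while $\beta(P) = \rank_R(P^{\min})$. Once $P$ is minimal, $F^e P$ is also minimal for every $e \geq 0$, because the entries of its differential are $p^e$-th powers of elements of $\fm$ and hence lie in $\fm^{p^e} \subseteq \fm$. Consequently $\rank_R(F^e P) = \beta(F^e P) = \beta(P)$ for all $e$.

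For the key uniform bound, I would analyze the double complex $Q \otimes_R Q$ via its column filtration. Taking vertical homology first is exact since each $Q_s$ is flat, producing the $E^1$-page $E^1_{s,t} = Q_s \otimes_R H_t(Q)$, a finite length module of length $\rank_R(Q_s)\cdot \len_R H_t(Q)$. Because $Q$ is bounded the spectral sequence converges to $H_{s+t}(Q \otimes_R Q)$, and each $E^\infty_{s,t}$ is a subquotient of $E^1_{s,t}$. Summing over all bidegrees,
\[
h(Q \otimes_R Q) \;=\; \sum_{s,t}\len_R E^\infty_{s,t} \;\leq\; \sum_{s,t}\len_R E^1_{s,t} \;=\; \rank_R(Q)\cdot h(Q).
\]

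To finish, apply this inequality to $Q = F^e P$ and combine with the canonical isomorphism $F^e(P \otimes_R P) \cong F^e P \otimes_R F^e P$ (valid because base change along the Frobenius commutes with $\otimes_R$) and the identity $\rank_R(F^e P) = \beta(P)$ from the reduction step. This gives $h\bigl(F^e(P \otimes_R P)\bigr) \leq \beta(P)\cdot h(F^e P)$ for every $e$. Dividing by $p^{de}$ and letting $e\to\infty$ yields $h^\infty(P \otimes_R P) \leq \beta(P)\cdot h^\infty(P)$. The main obstacle is the uniform spectral-sequence bound in the third paragraph; the reduction and the final assembly are essentially formal.
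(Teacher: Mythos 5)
Your proposal is correct and takes essentially the same approach as the paper: both bound $h(F^e P \otimes_R F^e P)$ for each $e$ by $\beta(P)\,h(F^e P)$ using the column-filtration spectral sequence of the double complex and then pass to the limit, the only cosmetic difference being that you work with the $E^1$ page and make the reduction to a minimal complex explicit, whereas the paper reads off the bound from the $E^2$ page and implicitly uses that $\beta(F^eP)=\beta(P)$.
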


\begin{proof} For each $e$, we have
  $$
  h(F^e(P \otimes_R P)) = h(F^e(P) \otimes_R F^e(P)) \leq h(F^e(P)) \beta(F^e P) =  h(F^e(P)) \beta(P),
  $$
  where the inequality follows from the spectral sequence
  $$
  E^{p,q}_2 = H^p(H^q(F^e(P)) \otimes_R F^e(P)) \Longrightarrow H^{p+q}(F^e(P) \otimes_R F^e (P)).
  $$
  The result follows by taking suitable limits, using Proposition \ref{proplimitsexist}.  
\end{proof}

\begin{proof}[Proof of Theorem \ref{thmB}]
As in \cite[Proof of Theorem 2 part (2)]{walker2017total}, we may reduce to the case where $R$ is complete and has an algebraically closed residue field.

For any finite rank free $R$-module $E$, we have a natural isomorphism $F(G(E)) \cong G(F(E))$ where $G = T^2, S^2$, or $\L^2$ (since $F$ commutes with tensor constructions).
  By the naturality of Dold-Kan extensions, we obtain the isomorphisms
  \begin{equation} \label{E314a}
    \begin{aligned}
      F^e(\wt{T^2}(P)) & \cong \wt{T^2}(F^e(P)) \\
  F^e(\wt{S^2}(P)) & \cong \wt{S^2}(F^e(P)) \\
  F^e(\wt{\L^2}(P)) & \cong \wt{\L^2}(F^e(P)) \\
  \end{aligned}
\end{equation}
for each $P \in \Perf_{\geq 0}(R)$ and each $e \geq 0$.  
  
By \eqref{E419b}, for all $e \geq 0$ and $j$ there is an exact sequence
  $$
  H_j(F^{e+1}P) \to H_j(\wt{S^2}(F^e P)) \to H_j(\wt{\L^2}(F^eP))  \to H_{j-1}(F^{e+1}(P)).
  $$
  Using \eqref{E314a} and Theorem \ref{thmRoberts2}, upon taking suitable limits we obtain the equality
  \begin{equation} \label{E311c}
  h_j^\infty(\wt{S^2}(P)) = h_j^\infty(\wt{\L^2}(P)), \, \text{  for all $j \geq 3$.}
\end{equation}
We also have the exact sequence
  \begin{equation} \label{E314b}
  0 \to L_e \to H_2(\wt{S^2}(F^eP)) \to H_2(\wt{\L^2}(F^eP))  \to H_1(F^{e+1}(P))
  \to  H_1(\wt{S^2}(F^eP)) 
  \end{equation}
where $L_e := \im\left(H_2(F^{e+1}(P)) \to  H_2(\wt{S^2}(F^eP))\right)$. 
Using Theorem \ref{thmRoberts2} again, we have $\lim_{e \to \infty} \frac{\len_R L_e}{2^{de}} = 0$, and thus by using \eqref{E314a} and
taking suitable limits, the exact sequence \eqref{E314b} yields
$$
h_1^\infty(F(P)) \leq h_1^\infty(\wt{S^2}(P)) + h_2^\infty(\wt{\L^2}(P)) - h_2^\infty(\wt{S^2}(P)).
$$
Similarly, the exact sequence 
  $$
  H_1(\wt{\L^2}(F^eP))  \to H_0(F^{e+1}(P))  \to H_0(\wt{S^2}(F^eP)) \to H_0(\wt{\L^2}(F^eP))  \to 0
  $$
  gives
  $$
 h_0^\infty(F(P)) \leq h_1^\infty(\wt{\L^2}(P)) + h_0^\infty(\wt{S^2}(P)) - h_0^\infty(\wt{\L^2}(P)).
$$
It is evident from the definition of $h^\infty$ that $h^\infty(F(P)) = 2^d h^\infty(P)$, and thus
\begin{equation} \label{E311e}
  2^d  h^\infty(P) \leq
h_1^\infty(\wt{\L^2}(P)) + h_0^\infty(\wt{S^2}(P)) - h_0^\infty(\wt{\L^2}(P)) +
h_1^\infty(\wt{S^2}(P)) + h_2^\infty(\wt{\L^2}(P)) - h_2^\infty(\wt{S^2}(P)).
\end{equation}

Applying \eqref{E310a} to $F^e(P)$ for each $e \geq 0$, using \eqref{E314a} and taking suitable limits, we find that there is an inequality
\begin{equation} \label{E314c}
  h^\infty(P \otimes_R P) \geq h^\infty_0(\wt{S^2}(P)) + \sum_{j \ge 0} |h^\infty_j(\wt{\L^2}(P)) -  h^\infty_{j+1}(\wt{S^2}(P))|.
  \end{equation}
Using this along with \eqref{E311c} gives
  $$
  h^\infty(P \otimes_R P) \geq h_0(\wt{S^2}(P))  - h^\infty_0(\wt{\L^2}(P)) +  h^\infty_{1}(\wt{\L^2}(P))
+h^\infty_1(\wt{S^2}(P)) -  h^\infty_{2}(\wt{\L^2}(P)) + h^\infty_2(\wt{S^2}(P)).
$$
Combining this with  \eqref{E311e} results in 
$$
h^\infty(P \otimes_R P) 
\geq 2^d h^\infty(P)
$$
so that by Lemma \ref{lem:tensorIneq} we arrive at 
$$
h^\infty(P) \beta(P) \geq 2^d h^\infty(P).
$$
To complete the proof, we just need to show $h^\infty(P) > 0$.
If $H_0(P) \ne 0$, then
$$
h_0^\infty(P) = \lim_{e \to \infty} \frac{\len_R(F^e H_0(P)}{2^{ed}} \geq
\lim_{e \to \infty} \frac{\len_R F^e(k)}{2^{ed}} = e_{HK}(R) > 0,
$$
where $e_{HK}$ denotes the Hilbert-Kunz multiplicity, which is known to be positive (see \cite[Chapter 7.3]{RobertsBook}). If $H_0(P) = 0$ then $P$ is quasi-isomorphic to the suspension of a tiny complex $P'$. By the New Intersection Theorem, we must have $H_0(P') \ne 0$,
so that $h_1^\infty(P) = h_0^\infty(P') > 0$ by  the previous case. 
\end{proof}

Theorem \ref{thmB} raises the question of what can be said about the total Betti number of an arbitrary object $P \in\Perf^\fl(R)$ for rings of characteristic $2$.
Unlike the case where $\chr(k) \ne 2$, we know of no counterexample to the conjecture that $\beta(P) \geq 2^{\dm(R)}$ for any such $P$.
We close with a result concerning complexes that are ``two terms longer'' than tiny complexes, which we view as
giving additional evidence that $\beta(P) \geq 2^{\dm(R)}$ might hold in general:

  \begin{thm}\label{thm:012homology}
  Suppose $R$ is a complete local ring of characteristic $2$ and dimension $d$ with a perfect residue field, and assume
  $$
  P = (0 \to P_{d+2} \to \cdots \to P_1 \to P_0 \to 0)
  $$
  is an object of $\Perf^\fl(R)$ concentrated in degrees $[0, d +2]$.
Then there is an inequality
$$\beta (P) \geq 2^{d} \frac{h^\infty_1 (P)}{h^\infty(P)}.$$
In particular, if
 $\chi^\infty(P) = 0$, then
  $\beta(P) \geq 2^{d-1}$.
\end{thm}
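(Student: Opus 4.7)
The plan is to adapt the argument from the proof of Theorem \ref{thmB}, where $P$ was concentrated in $[0,d+1]$, to the slightly longer setting of $[0,d+2]$. The machinery remains identical: the two long exact sequences \eqref{E419a} and \eqref{E419b} from Lemma \ref{lem:fundamentalSES}, the vanishing provided by Theorem \ref{thmRoberts2}, the Frobenius-naturality of Dold-Kan extensions encoded in \eqref{E314a}, and the multiplicativity estimate $h^\infty(P \otimes_R P) \leq h^\infty(P) \beta(P)$ from Lemma \ref{lem:tensorIneq}. The only structural change is that Theorem \ref{thmRoberts2} now guarantees $h^\infty_j(F(P)) = 0$ only for $j > 2$, so the new quantity $h^\infty_2(F(P)) = 2^d h^\infty_2(P)$ must be carried through the accounting.

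First, I would reprove the analogue of \eqref{E311c}: applying the Frobenius LES \eqref{E419b} to $F^e(P)$, using \eqref{E314a}, and passing to the limit now yields $h^\infty_j(\wt{S^2}(P)) = h^\infty_j(\wt{\L^2}(P))$ for all $j \geq 4$, together with an extra inequality at $j = 3$ (not appearing in the proof of Theorem \ref{thmB}) that bounds the difference $h^\infty_3(\wt{\L^2}(P)) - h^\infty_3(\wt{S^2}(P))$ above by a quantity involving $2^d h^\infty_2(P)$. The inequalities at $j = 2, 1, 0$ are directly analogous to those used in the proof of Theorem \ref{thmB}. Summing the four resulting bounds produces an upper estimate on $2^d h^\infty(P) = h^\infty(F(P))$ in terms of a signed combination of $h^\infty_j(\wt{S^2}(P))$ and $h^\infty_j(\wt{\L^2}(P))$ for $0 \leq j \leq 3$.

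Next, applying \eqref{E310a} to $F^e(P)$, using \eqref{E314a}, and passing to the limit yields the analogue of \eqref{E314c}, a lower bound on $h^\infty(P \otimes_R P)$ by $h^\infty_0(\wt{S^2}(P)) + \sum_{j \geq 0} |h^\infty_j(\wt{\L^2}(P)) - h^\infty_{j+1}(\wt{S^2}(P))|$. Choosing signs in the absolute values to telescope against the Frobenius-LES bounds from the previous step, and accounting for the two boundary defects (the Dutta terms $h^\infty_0(F(P))$ and $h^\infty_2(F(P))$ sitting at the extreme indices of the Frobenius LES), I expect the final inequality to be
\begin{equation*}
    h^\infty(P \otimes_R P) \;\geq\; 2^d h^\infty_1(P).
\end{equation*}
Combining this with Lemma \ref{lem:tensorIneq} and dividing by $h^\infty(P) > 0$ (positivity by the Hilbert--Kunz multiplicity argument at the end of the proof of Theorem \ref{thmB}, after reducing to $H_0(P) \neq 0$ via truncation) yields $\beta(P) \geq 2^d h^\infty_1(P)/h^\infty(P)$. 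The consequence when $\chi^\infty(P) = 0$ is immediate: the relation $h^\infty_1(P) = h^\infty_0(P) + h^\infty_2(P)$ forces $h^\infty(P) = 2 h^\infty_1(P)$, so $\beta(P) \geq 2^{d-1}$.

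The main obstacle is the sign-tracking in the telescoping step: one must pick the absolute values in \eqref{E314c} so that the two ``uncontrollable'' boundary contributions $2^d h^\infty_0(P)$ and $2^d h^\infty_2(P)$ are precisely the losses (and not larger), leaving exactly $2^d h^\infty_1(P)$ on the right. A cleaner organizing principle may be to view $h^\infty_1(P)$ as the \emph{interior} Dutta multiplicity of $F(P)$: the tensor estimates can only recover interior Dutta terms, while the boundary ones (here $h^\infty_0$ and $h^\infty_2$, because the length $d + 2$ forces two boundary indices relative to the dimension $d$) must be dropped. This also suggests a uniform approach that would handle complexes of arbitrary length $[0, d + r]$ with a formula relating $\beta(P)$ to the ``middle'' Dutta multiplicities $h^\infty_j(P)$ for $0 < j < r$.
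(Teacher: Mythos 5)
Your plan identifies the correct target inequality $h^\infty(P \otimes_R P) \geq 2^d h_1^\infty(P)$, the correct toolkit (Lemma \ref{lem:fundamentalSES}, Theorem \ref{thmRoberts2}, the Frobenius compatibility \eqref{E314a}, Lemma \ref{lem:tensorIneq}), and you handle the $\chi^\infty = 0$ endgame exactly as the paper does. But the central step is left as a heuristic (``I expect the final inequality to be\dots'', ``the main obstacle is the sign-tracking''), and the route you sketch --- sum four Frobenius-LES upper bounds to estimate $2^d h^\infty(P) = h^\infty(F(P))$ and then ``subtract off boundary defects'' --- doesn't close as written: since the Frobenius LES only gives you \emph{upper} bounds on each $h_j^\infty(F(P))$, you cannot subtract $h_0^\infty(F(P))$ and $h_2^\infty(F(P))$ from an upper bound for the total $h^\infty(F(P))$ and hope to retain a lower bound for the tensor term. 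The paper sidesteps this entirely by never estimating $h^\infty(F(P))$ at all: it isolates the single three-term piece
$$H_2(\wt{\L^2}(F^eP)) \longrightarrow H_1(F^{e+1}P) \longrightarrow H_1(\wt{S^2}(F^eP))$$
of \eqref{E419b}, which in the limit gives $h_1^\infty(F(P)) \leq h_2^\infty(\wt{\L^2}(P)) + h_1^\infty(\wt{S^2}(P))$ directly. On the other side, starting from \eqref{E314c} and using $h_j^\infty(\wt{S^2}) = h_j^\infty(\wt{\L^2})$ for $j \geq 4$ (from Theorem \ref{thmRoberts2}), telescoping the $j \geq 3$ terms, and then discarding the nonnegative contributions $h_0^\infty(\wt{S^2}) - h_0^\infty(\wt{\L^2}) \geq 0$ (the surjection $H_0(\wt{S^2}) \onto H_0(\wt{\L^2})$ from \eqref{E419b}) and $h_3^\infty(\wt{\L^2}) - h_3^\infty(\wt{S^2}) \geq 0$ (from $h_3^\infty(F(P)) = 0$ in the limit), one lands precisely on $h^\infty(P \otimes_R P) \geq h_1^\infty(\wt{S^2}(P)) + h_2^\infty(\wt{\L^2}(P))$. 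The two inequalities share the same right-hand side, so the comparison $h^\infty(P \otimes_R P) \geq h_1^\infty(F(P)) = 2^d h_1^\infty(P)$ is immediate with no further sign-tracking needed. Your intuition that the boundary Dutta terms $h_0^\infty(F(P))$ and $h_2^\infty(F(P))$ are ``lost'' is correct in spirit --- they simply never need to enter the argument --- but making the proof rigorous requires targeting $h_1^\infty(F(P))$ alone rather than the whole of $h^\infty(F(P))$.
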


\begin{proof}
  By Theorem \ref{thmRoberts2} we have $h_j^\infty(F^eP) = 0$ for $j \notin \{0,1,2\}$ and all $e \geq 0$,
  and thus $h_j^\infty(\wt{\L^2}(P)) = h_j^\infty(\wt{S^2}(P))$ for $j \geq 4$.
Using \eqref{E314c} this gives
$$
\begin{aligned}
  h^\infty(P \otimes_R P) &   \geq  h^\infty_0(\wt{S^2}(P)) -   h^\infty_0(\wt{\L^2}(P)) + h^\infty_1(\wt{S^2}(P))   + |h^\infty_1(\wt{\L^2}(P)) 
  - h^\infty_2(\wt{S^2}(P))| \\
  & \phantom{XXXXXXXXXXXXX} + h^\infty_2(\wt{\L^2}(P)) - h^\infty_3(\wt{S^2}(P)) + h^\infty_3(\wt{\L^2}(P))  \\
& \geq  h^\infty_1(\wt{S^2}(P))+   |h^\infty_1(\wt{\L^2}(P)) - h^\infty_2(\wt{S^2}(P))| + h^\infty_2(\wt{\L^2}(P))  \\
& \geq  h^\infty_1(\wt{S^2}(P)) + h^\infty_2(\wt{\L^2}(P))   \\
\end{aligned}
$$
where the second inequality uses the fact that there is a surjection
$H_0(\wt{S^2}(F^eP)) \onto H_0(\wt{\L^2}(F^eP))$ 
and an injection $H_3(\wt{S^2}(F^eP) ) \into H_3(\wt{\L^2}(F^eP))$ for all $e \geq 0$. 

Now, the exact sequence  $H_2(\wt{\L^2}(F^eP)) \to H_1(F^{e+1}) \to H_1(\wt{S^2}(F^e(P))$ for each $e \geq 0$ implies that
$$
h_1^\infty(F(P)) \leq  h^\infty_1(\wt{S^2}(P)) + h^\infty_2(\wt{\L^2}(P)).
$$
Since $h_1^\infty(F(P)) = 2^d h_1^\infty(P)$,  
by using Lemma \ref{lem:tensorIneq} we arrive at
$$
h^\infty(P) \beta(P) \geq 2^d h_1^\infty(P).
$$
Finally, $h^\infty(P) > 0$ by an argument similar to that in the proof of Theorem \ref{thmB}, and the first assertion follows.
If $\chi^\infty(P) = 0$, then since $h_j^\infty(P) = 0$ for $j \notin \{0,1,2\}$,
we have $h^\infty(P) = 2 h_1^\infty(P)$ and the second assertion follows.
\end{proof}

\section{The Total Rank Conjecture for Short Complexes for Algebras over a Field}\label{sec:TRCOddChar}

The goal of this section is to show that $\beta (P) \geq 2^{\dm(R)}$ holds for any short complex over a $k$-algebra, where $k$ is a field.
The previous section has already proved this for rings of characteristic $2$. The proof in the odd characteristic case is given by making suitable modifications of
the arguments presented in \cite{walker2017total}, and the characteristic zero case is deduced from the prime characteristic case by using a variant of Hochster's metatheorem proved by Kurano-Roberts \cite{KRAdamsOps}.

\begin{thm}\label{thm:localTinyAnalog}
  The Total Rank Conjecture for Short Complexes, and hence the Total Rank Conjecture itself, holds for all commutative Noetherian algebras over a field.
\end{thm}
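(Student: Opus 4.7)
The plan is to reduce to tiny complexes over a complete local ring and then split into cases according to the characteristic. Given a short complex $P$ over $R$ with $c=\codim_R H(P)$, localize at a minimal prime $\fp\in\supp_R H(P)$ with $\height(\fp)=c$: the component ranks are unchanged, $R_\fp$ still contains a field, and $P_\fp$ becomes tiny over $R_\fp$. As in \cite[Proof of Theorem 2]{walker2017total}, completion and a standard faithfully flat residue-field extension allow us to assume that $(R,\fm,k)$ is complete local of dimension $d$ with $k$ perfect and algebraically closed. Since $R$ contains a field we have $\chr(R)\in\{0\}\cup\{p\text{ prime}\}$, and it suffices to show $\sum_j\rank_R P_j\geq 2^d$.

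The case $\chr(R)=2$ is immediate from Theorem \ref{thmB}. For $\chr(R)=p$ an odd prime, the plan is to adapt the argument of \cite{walker2017total} from minimal resolutions of finite-length modules to arbitrary tiny complexes by replacing length with the Dutta multiplicity formalism. The essential inputs are: (i) since $2$ is a unit in $R$, the sequence of Lemma \ref{lem:fundamentalSES} splits naturally via (anti)symmetrization, giving a homotopy decomposition $P\otimes_R P \simeq \wt{S^2}P\oplus \wt{\L^2}P$ in $\Perf_{\geq 0}(R)$; (ii) the $\lambda$-ring identity $[\wt{S^2}P]-[\wt{\L^2}P]=\psi^2[P]$ combined with the Gillet-Soul\'e type identity $\chi^\infty(\psi^2 Q)=2^d\chi^\infty(Q)$ (which in characteristic $p$ odd is obtained by comparing $\psi^2$ to the Frobenius $F=\psi^p$ via standard $\lambda$-ring manipulations and the fact that $\chi^\infty(F^eQ)/p^{de}\to\chi^\infty(Q)$ is asymptotically compatible with the $\lambda$-structure); and (iii) Theorems \ref{thmRoberts} and \ref{thmRoberts2}, which constrain the Dutta-multiplicity tables of $\wt{S^2}P$ and $\wt{\L^2}P$ (these complexes live in degrees $[0,2d]$ by Proposition \ref{PolyFunctorProp}). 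Together these extract the inequality $h^\infty(P\otimes_R P)\geq 2^d h^\infty(P)$ from the Euler-characteristic identity in (ii); combining with Lemma \ref{lem:tensorIneq} and dividing by $h^\infty(P)>0$ (Theorem \ref{thmRoberts}) gives $\beta(P)\geq 2^d$.

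For $\chr(R)=0$, the plan is to descend to positive characteristic using the Kurano-Roberts variant \cite{KRAdamsOps} of Hochster's metatheorem \cite{hochsterSurvey}. The datum of a tiny complex $P$ of free modules with prescribed component ranks, together with the condition that $H(P)$ has finite length (equivalently, is annihilated by some power of the maximal ideal), can be encoded by finitely many polynomial equalities and non-vanishing conditions over a finitely generated $\Z$-subalgebra $A\subseteq R$. Specializing $A$ modulo infinitely many primes $p$ (which can be chosen $\neq 2$ or $=2$ at will) yields a tiny complex over a $d$-dimensional local ring of characteristic $p$ with the same component ranks; the previously established positive-characteristic cases give $\sum_j\rank P_j\geq 2^d$, and this lifts back to characteristic zero.

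The main obstacle I anticipate is step (iii) in the odd-characteristic argument: the identity in (ii) is an equality of Euler characteristics $\chi^\infty$, whereas the target inequality involves the non-alternating sums $h^\infty$, so one must sharply bound the $h^\infty_j(\wt{S^2}P)$ and $h^\infty_j(\wt{\L^2}P)$ for $j>0$ using Theorem \ref{thmRoberts2} to rule out cancellation that would spoil the bound. A secondary issue is verifying, in the characteristic-zero descent, that the codimension-of-homology condition and the non-exactness of $P$ are preserved under specialization, which should follow from upper semicontinuity of fiber dimensions together with the fact that ranks of the differentials can be encoded by non-vanishing of appropriate minors.
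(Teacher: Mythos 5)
Your overall plan matches the paper's proof: localize to the tiny case, split by characteristic, and for $\chr(R)=0$ descend to positive characteristic via the Kurano--Roberts variant of Hochster's metatheorem (the paper cites Lemmas 4.1, 4.2, and the discussion after Lemma 4.4 of \cite{KRAdamsOps} to produce $R'$ and $P'$ of the same dimension, component ranks, and homology lengths; this handles the specialization concerns you raise). The $\chr(R)=2$ case via Corollary~\ref{MainCor}/Theorem~\ref{thmB} is fine.

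The one real gap is step (iii) in your odd-characteristic argument, and it has a much simpler resolution than the one you anticipate: no appeal to Theorem~\ref{thmRoberts2}, no bounding of individual $h^\infty_j$, and no degree-range bookkeeping is needed. Two elementary observations close it. First, because the short exact sequence $0\to \wt{\L^2}P\to \wt{T^2}P\to \wt{S^2}P\to 0$ splits when $2$ is invertible (by $e\otimes e'\mapsto \tfrac12\, e\smsh e'$), one gets the \emph{exact} additivity
$$
h^\infty(P\otimes_R P)=h^\infty(\wt{S^2}P)+h^\infty(\wt{\L^2}P),
$$
not merely an inequality. Second, since each $h^\infty_j(Q)\geq 0$, one always has $h^\infty(Q)\geq |\chi^\infty(Q)|$, so in particular $h^\infty(\wt{S^2}P)\geq \chi^\infty(\wt{S^2}P)$ and $h^\infty(\wt{\L^2}P)\geq -\chi^\infty(\wt{\L^2}P)$. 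Combining with Lemma~\ref{lem:tensorIneq} and \cite[Theorem 3.1]{KRAdamsOps}:
$$
h^\infty(P)\beta(P)\;\geq\; h^\infty(P\otimes_R P)\;=\;h^\infty(\wt{S^2}P)+h^\infty(\wt{\L^2}P)\;\geq\;\chi^\infty(\wt{S^2}P)-\chi^\infty(\wt{\L^2}P)\;=\;2^d\chi^\infty(P)\;=\;2^d h^\infty(P),
$$
where the last equality is Theorem~\ref{thmRoberts}, which also gives $h^\infty(P)>0$; dividing finishes the odd-characteristic case. Your worry about cancellation is precisely what the sign pattern in $\chi^\infty(\wt{S^2}P)-\chi^\infty(\wt{\L^2}P)$ is set up to avoid once you use the $|\chi^\infty|\leq h^\infty$ bound on each summand separately. (One secondary inaccuracy: your sketch in (ii) of proving the Kurano--Roberts identity by comparing $\psi^2$ with the Frobenius $\psi^p$ is not how the cited result is actually obtained, but as the paper just invokes \cite[Theorem 3.1]{KRAdamsOps} as a black box this does not affect the argument.)
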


\begin{proof} By localizing at a prime of height $c$ in the support of  $P$, the general case reduces to the case of a tiny complex $P$ over a local ring $R$.

When $\chr(R) = 2$, the statement is Corollary \ref{MainCor}. Assume $\chr(R) \geq 3$. 
Then for each finite rank free $R$-module $E$, 
the short exact sequence 
$$
0 \to \L^2 E \to T^2 E \to S^2 E \to 0
$$
is in fact split exact, with the splitting of the left-hand map given by $e \otimes e' \mapsto \frac{1}{2} e \smsh e'$. 
It follows that
$$
0 \to \wt{\L^2}(P) \to \wt{T^2}(P) \to \wt{S^2}(P) \to 0
$$
is a split exact sequence of complexes and, in particular,
$$
h^\infty (P \otimes_R P) = h^\infty (\wt{S^2} P) + h^\infty (\wt{\L^2} P).
$$
By \cite[Theorem 3.1]{KRAdamsOps} we have
$$
\chi^\infty(\wt{S^2}P) - \chi^\infty(\wt{\L^2}P) = 2^d \cdot \chi^\infty(P).
$$
By Lemma \ref{lem:tensorIneq} we have $h^\infty (P \otimes_R P) \leq h^\infty (P) \beta(P)$ and
$h^\infty(P) = \chi^\infty(P)$ by Theorem \ref{thmRoberts}. 
Putting these facts together gives
$$
h^\infty(P) \cdot \beta(P) \geq 2^d \cdot h^\infty(P)
$$
% \begingroup\allowdisplaybreaks
% \begin{align*}
%   2^d h^\infty (P)     &= 2^d \chi^\infty(P) \\
%           &=                   \chi_\infty (S^2 P) - \chi_\infty (\L^2 P) \\
%     & \leq h^\infty (S^2 P) + h^\infty (\L^2 P) \\
%     & = h^\infty (T^2 P) \\
%     & \leq h_\infty (P) \cdot \beta (P),
% \end{align*}
% \endgroup
and the result follows by dividing through by $h^\infty(P)$ (which is greater than $0$
by Theorem \ref{thmRoberts}).

Finally, assume that $R$ is an algebra over a field of characteristic $0$. By Lemmas $4.1$, $4.2$, and the discussion after the proof of Lemma $4.4$ in \cite{KRAdamsOps}, there exists a local ring $R'$ of prime characteristic $p$ and a complex $P'$ of free $R'$-modules satisfying:
    \begin{enumerate}
        \item $\dim R' = \dim R $,
        \item $\rank_{R'} (P'_i) = \rank_R (P_i) $ for all $i$, and
        \item $\ell_{R'} (H_i(P')) = \ell_R (H_i (P))  $ for all $i$.
    \end{enumerate}
    This means that $P'$ is a tiny complex over a field of positive characteristic, and hence 
    satisfies the Total Rank Conjecture. Since the total ranks of $P$ and $P'$ coincide and the dimensions of $R$ and $R'$ are equal, the complex $P$ also satisfies the Total Rank Conjecture.
\end{proof}

\section{Carlsson's Conjecture} \label{sec:Carlsson}

In this section, we apply the techniques used to prove the Total Rank Conjecture in characteristic two to investigate a conjecture of G.~Carlsson \cite[II.2]{carlsson1986free}. His
original conjecture concerned the $\Z/2$-cohomology of a finite CW complex that admits a free action by an elementary abelian $2$-group. The purely algebraic generalization of this conjecture
reads:

\begin{conj} [Carlsson's Conjecture]
Let $k$ be a field of characteristic $2$ and $R = k[x_1 , \dots , x_d] / (x_1^2 , \dots , x_d^2)$. For any bounded complex of finite rank free $R$-modules $P$ we have
$$
h(P) := \sum_i \dim_k H_i(P) \geq 2^d.
$$
\end{conj}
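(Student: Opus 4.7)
The plan is to attempt to prove Carlsson's Conjecture by a strategy parallel to the proof of Theorem \ref{thmB}, using the Dold-Kan--extended short exact sequences of Lemma \ref{lem:fundamentalSES} and the Frobenius twist $F(P)$.

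First, I would reduce to the case where $P$ is minimal, i.e., every differential has matrix entries in $\fm := (x_1,\dots,x_d)$. This is a standard manoeuvre: any bounded complex of finite rank free $R$-modules decomposes as a direct sum of a minimal complex and contractible pieces of the form $R \xrightarrow{\id} R$. The key local-algebra observation is that, since $\chr(R) = 2$, Frobenius additivity gives $\bigl(\sum_I a_I x^I\bigr)^2 = \sum_I a_I^2 x^{2I}$, which lies in $(x_1^2,\dots,x_d^2) = 0$; hence every element of $\fm$ squares to zero in $R$. Consequently, for a minimal $P$, the complex $F(P)$ has zero differentials, and since the Frobenius action on $R$ factors through the residue field one has $F(R) \cong k^{2^d}$, so
\[
h(F(P)) \;=\; \sum_i \dim_k F(P_i) \;=\; 2^d \cdot \rank(P).
\]

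The next step is to bracket $h(P \otimes_R P)$ between $h(F(P))$ and $\rank(P) \cdot h(P)$. The upper bound comes from an analogue of Lemma \ref{lem:tensorIneq} in which no Frobenius limits are needed: the Grothendieck spectral sequence $H_p(H_q(P) \otimes_R P) \Rightarrow H_{p+q}(P \otimes_R P)$ together with $H_q(P) \otimes_R P_j \cong H_q(P)^{\rank P_j}$ yields
\[
h(P \otimes_R P) \;\leq\; \rank(P) \cdot h(P).
\]
The goal is then to establish the matching lower bound $h(P \otimes_R P) \geq h(F(P)) = 2^d \rank(P)$. Combining the two bounds and dividing by $\rank(P) > 0$ would give $h(P) \geq 2^d$ and finish the conjecture.

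The hard part is precisely this lower bound on $h(P \otimes_R P)$. In characteristic other than $2$ it would be essentially automatic: the short exact sequence $0 \to \wt{\L^2}(P) \to \wt{T^2}(P) \to \wt{S^2}(P) \to 0$ splits naturally, so the inclusion $F(P) \hookrightarrow \wt{S^2}(P)$ of Lemma \ref{lem:fundamentalSES} composes with the splitting to produce an embedding of $F(P)$ into $\wt{T^2}(P) \simeq P \otimes_R P$ (via Proposition \ref{prop228}). In characteristic $2$, where the conjecture actually lives, this splitting is unavailable, and one must instead juggle both short exact sequences of Lemma \ref{lem:fundamentalSES} simultaneously at the level of their long exact homology sequences. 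Taking alternating sums in these sequences yields the Euler-characteristic identities $\chi(\wt{S^2} P) - \chi(\wt{\L^2} P) = \chi(F(P))$ and $\chi(\wt{S^2} P) + \chi(\wt{\L^2} P) = \chi(\wt{T^2} P)$, but these pin down only signed differences rather than the total homology dimensions we need. Thus I expect the principal obstacle to be controlling the connecting homomorphisms in both long exact sequences at once; pursuing this strategy should, at least, yield Carlsson-style bounds in special cases (e.g.\ when $P$ is concentrated in few consecutive degrees, or when the relevant connecting maps vanish for rank reasons), giving further evidence for the full conjecture in characteristic $2$.
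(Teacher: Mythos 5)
This statement is a \emph{conjecture} in the paper; the paper does not prove it, and neither do you. The paper only establishes the special cases of minimal complexes concentrated in degrees $[0,1]$ (lower bound $2^d$) and $[0,2]$ (lower bound $2^{d-1}$) in Proposition~\ref{prop:CarlssonsLength2}, together with Remark~\ref{rem47} that the conjecture is equivalent, via Koszul duality, to $\beta(P)\ge 2^d$ for $P\in\Perf^\fl(k[t_1,\dots,t_d])$. Your outline is essentially the strategy the paper uses for those partial results: reduce to minimal $P$; use that $\chr(R)=2$ forces every element of $\fm$ to square to zero, so $F(P)$ has zero differentials and $h(F(P))=2^d\rank(P)$; combine the upper bound $h(P\otimes_R P)\le \rank(P)\,h(P)$ with a lower bound on $h(P\otimes_R P)$ coming from the two long exact sequences of Lemma~\ref{lem:fundamentalSES}; and divide. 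You correctly identify the missing step as the lower bound $h(P\otimes_R P)\ge h(F(P))$, and you correctly anticipate that it is only attainable in special cases --- the paper obtains it for complexes of length one, and only the weaker $h(P\otimes_R P)\ge h_1(F(P))$ for length two, precisely because the connecting maps in the long exact sequences cannot be controlled beyond that.

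One point in your outline is confused: the remark that the lower bound ``would be essentially automatic'' in characteristic other than $2$ because one could compose $F(P)\hookrightarrow\wt{S^2}(P)$ with the splitting of $\wt{T^2}(P)\to\wt{S^2}(P)$. The short exact sequence $0\to F(P)\to\wt{S^2}(P)\to\wt{\L^2}(P)\to 0$ is itself a characteristic-$2$ phenomenon (the map $F(P)\to S^2(P)$ is adjoint to $x\mapsto x^2$ and is only $R$-linear when the Frobenius is a ring map); there is no such Frobenius embedding into $\wt{S^2}(P)$ away from characteristic $2$, so there is nothing to compose with the splitting. Carlsson's conjecture is intrinsically a characteristic-$2$ statement, and both the difficulty and the available tools live there.
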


\begin{rem} \label{rem47}
    If $\chi(P) \neq 0$, then $h (P) \geq 2^d$ since 
    $$
    h(P) \geq |\chi (P)| = 2^d \left\vert \sum_{i} (-1)^i \rank_R (P_i) \right\vert.
    $$
We also note that Carlsson's conjecture is equivalent to the assertion that $\beta(P) \geq 2^d$ for $P \in \Perf^\fl(k[t_1, \dots ,t_d])$. This holds since, in characteristic $2$,
exterior algebras and symmetric algebras are Koszul-duals; see \cite[II.7]{carlsson1986free}. 
\end{rem}

\begin{prop}\label{prop:CarlssonsLength2} 
  Let $P = (\cdots \to 0 \to P_m \to \cdots \to P_0 \to 0)$ be a bounded complex of finite rank free modules over 
  $R = k[x_1 , \dots , x_d] / (x_1^2 , \dots , x_d^2)$ where $k$ is a field of characteristic $2$.
If $m = 1$, then $h(P) \geq 2^d$ and if $m = 2$, then $h (P) \geq 2^{d-1}$.
\end{prop}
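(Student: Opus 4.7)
First, I would reduce to a minimal complex $P$, which does not change $h(P)$. For both $m=1$ and $m=2$, if $\chi(P) \ne 0$, then Remark \ref{rem47} already gives $h(P) \geq |\chi(P)| = 2^d\bigl|\sum_i (-1)^i r_i\bigr| \geq 2^d$, so I may assume $\chi(P) = 0$. When $m=1$ this forces $r_0 = r_1 = r$ and hence $\dim_k H_0(P) = \dim_k H_1(P)$, so $h(P) = 2 \dim_k H_0(P)$; the goal becomes to show $\dim_k H_0(P) \geq 2^{d-1}$.

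The central tool is the characteristic-$2$ short exact sequence from Lemma \ref{lem:fundamentalSES}:
$$
0 \to F(P) \to \wt{S^2}(P) \to \wt{\Lambda^2}(P) \to 0.
$$
The crucial observation specific to $R = k[x_1, \dots, x_d]/(x_1^2, \dots, x_d^2)$ is that for any minimal $P$, the complex $F(P)$ has identically vanishing differentials. Indeed, every $a \in \mathfrak{m}$ satisfies $a^2 = 0$: writing $a$ as a $k$-linear combination of monomials $x_S = \prod_{i \in S} x_i$ with $S \ne \emptyset$, each $(x_S)^2$ contains some $x_i^2 = 0$, and in characteristic $2$ all cross-terms in the expansion of the square vanish. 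Since the matrix entries of the differentials of a minimal $P$ lie in $\mathfrak{m}$, the Frobenius twist $F(d_i)$ (obtained by squaring entries) is the zero map. Hence $H_i(F(P)) \cong F(P_i)$ and $h(F(P)) = 2^d \cdot \operatorname{rank}_R(P)$, which equals $2^{d+1} r$ in the $m=1, r_0 = r_1 = r$ situation.

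Next, I would analyze the long exact sequence from the SES above alongside the always-valid SES $0 \to \wt{\Lambda^2}(P) \to P \otimes_R P \to \wt{S^2}(P) \to 0$ (using the quasi-isomorphism $\wt{T^2}(P) \simeq P \otimes_R P$ from Proposition \ref{prop228}). A key input is Lemma \ref{lem419}, which identifies $H_0(\wt{\Lambda^2}(P)) \cong \Lambda^2 H_0(P)$, together with the parallel identification $H_0(\wt{S^2}(P)) \cong S^2 H_0(P)$. For $m = 1$ the complexes $\wt{S^2}(P)$ and $\wt{\Lambda^2}(P)$ are concentrated in degrees $[0, 2]$, which is short enough to pin down the desired bound by carefully tracking dimensions through the LES, in the spirit of the proof of Theorem \ref{thmB}. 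Combined with a Künneth-style inequality comparing $h(P \otimes_R P)$ to $h(P) \cdot \operatorname{rank}(P)$ (an Artinian adaptation of Lemma \ref{lem:tensorIneq}), this should deliver the target $h(P) \geq 2^d$.

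For $m = 2$, the same overall strategy applies, but now $\wt{S^2}(P)$ and $\wt{\Lambda^2}(P)$ span degrees $[0, 4]$, introducing additional intermediate terms where homology can absorb length. The resulting estimate is weaker by a factor of $\tfrac12$, yielding $h(P) \geq 2^{d-1}$, exactly paralleling the phenomenon seen in Theorem \ref{thm:012homology} for complexes with one extra homological degree. The hardest part will be controlling these intermediate homology groups in the Artinian setting: the Dutta-multiplicity and Hilbert--Kunz normalizations that drive the proofs of Theorems \ref{thmB} and \ref{thm:012homology} degenerate when $\dim R = 0$, so every inequality must be extracted directly at the level of finite lengths rather than at the level of the limiting invariants $h^\infty$.
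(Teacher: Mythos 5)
Your strategy is the same as the paper's, and your key observation---that every element of $\fm$ squares to zero in this ring so the Frobenius-twisted differential on a minimal complex vanishes identically, giving $h_j(F(P)) = 2^d\beta_j(P)$---is exactly the crucial point. But the proposal stops at an outline precisely where the work lies. What the argument actually needs (and what the paper extracts by re-running the chase from Theorems \ref{thmB} and \ref{thm:012homology} at the level of finite lengths) is the pair of inequalities
$$
h(P\otimes_R P) \geq h(F(P)) \quad\text{when } m=1, \qquad h(P\otimes_R P)\geq h_1(F(P))\quad\text{when }m=2.
$$
These come from combining the estimate \eqref{E310a} with the long exact sequence \eqref{E419b}, and nothing asymptotic is required: for $m=1$ the complexes $\wt{S^2}P$ and $\wt{\L^2}P$ are concentrated in degrees $[0,2]$ and the two ends of \eqref{E419b} directly give $h_1(FP)\le h_1(\wt{S^2}P)+h_2(\wt{\L^2}P)-h_2(\wt{S^2}P)$ and $h_0(FP)\le h_1(\wt{\L^2}P)+h_0(\wt{S^2}P)-h_0(\wt{\L^2}P)$, which sum against the right-hand side of \eqref{E310a} to give $h(P\otimes P)\ge h(FP)$; for $m=2$ only $h_1(FP)$ can be controlled this way, which is where the lost factor of $\tfrac12$ enters. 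You gesture at this (``this should deliver the target'') but do not carry it out. Two smaller points: (i) your appeal to Lemma \ref{lem419} and the identification $H_0(\wt{S^2}P)\cong S^2H_0(P)$ is not actually needed here (it is used in Theorem \ref{thmA}, not in this argument), and (ii) for $m=2$ the final step requires observing that $\chi(P)=0$ forces $\beta_1(P)=\tfrac12\beta(P)$, which is what converts $h_1(FP)/\beta(P)=2^d\beta_1(P)/\beta(P)$ into $2^{d-1}$; this is absent from your write-up. Once those inequalities and the $m=2$ arithmetic are supplied, the proof is complete and coincides with the paper's.
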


\begin{proof} We may assume $P$ is minimal and that $\chi(P) = 0$ (see Remark \ref{rem47}).
  The arguments in Theorems \ref{thmB} and \ref{thm:012homology} give
  $$
h(P \otimes_R P) \geq  h(F(P)),  \text{ when $m = 1$ and } \,
h(P \otimes_R P) \geq  h_1(F(P)),  \text{ when $m = 2$.}
  $$
  Since $h(P \otimes_R P) \leq \beta(P) h(P)$ (see the proof of Lemma \ref{lem:tensorIneq}), it follows that
  $$
h(P) \geq  \frac{h(F(P))}{\beta(P)}, \text{ when $m = 1$, and } \,
h(P \otimes_R P) \geq  \frac{h_1(F(P))}{\beta(P)}, \text{ when $m = 2$.}
  $$
  Since $P$ is minimal and every element of the maximal ideal of $R$ squares to $0$, the differential in the complex $F(P)$ is zero, and thus
  $h_j(F(P)) = 2^d \beta_j(P)$ for all $j$.
  The first assertion is immediate and the second follows from $\beta_1(P) = \frac{\beta(P)}2$, which holds
 by the assumption $\chi(P) = 0$. 
\end{proof}

\begin{rem}
  The case $m = 1$ of this proposition was already known by work of Adem-Swan \cite[Corollary 2.1]{adem1995linear}. The proof presented here proves the (a priori) stronger inequality
  \begin{equation} \label{E47}
  h(P \otimes_R P) \geq 2^d \beta(P)
  \end{equation}
  in this case.
It thus seems reasonable to ask whether  \eqref{E47} holds for all $m$. Since $h(P \otimes_R P) \leq h(P) \beta(P)$ holds in general, this represents a stronger form of Carlsson's
Conjecture.  
\end{rem}

\bibliographystyle{amsalpha}
\bibliography{biblio}
\addcontentsline{toc}{section}{Bibliography}

\end{document}